\newtheorem{theorem}{Theorem}[section]
\newtheorem{corollary}[theorem]{Corollary}
\newtheorem{lemma}[theorem]{Lemma}
\newtheorem{proposition}[theorem]{Proposition}
\theoremstyle{definition}
\newtheorem{definition}[theorem]{Definition}
\newtheorem{remark}[theorem]{Remark}
\newtheorem{example}[theorem]{Example}
\newtheorem{convention}[theorem]{Convention}
\newcommand{\X}{{\mathbb{X}}}
\newcommand{\Y}{{\mathbb{Y}}}
\newcommand{\Z}{{\mathbb{Z}}}
\newcommand{\N}{{\mathbb{N}}}
\newcommand{\Q}{{\mathbb{Q}}}
\newcommand{\bbP}{{\mathbb{P}}}
\newcommand{\fm}{{\mathfrak{m}}}
\newcommand{\pmpn}{\mathbb{P}^{m}\!\times\mathbb{P}^{n}}
\newcommand{\HF}{\operatorname{HF}}
\newcommand{\im}{\operatorname{Im}}
\begin{document}

\title{The K\"{a}hler Different
of a Set of Points in~$\pmpn$}

\author{Tran N.K. Linh}
\address{Department of Mathematics\\
	University of Education, Hue University\\
	34 Le Loi, Hue, Vietnam}
\email{tnkhanhlinh@hueuni.edu.vn}

\author{Le N. Long}
\address{Fakult\"{a}t f\"{u}r Informatik und Mathematik \\
	Universit\"{a}t Passau, D-94030 Passau, Germany \newline
	\hspace*{.5cm} \textrm{and} Department of Mathematics,
	University of Education, Hue University\\
	34 Le Loi, Hue, Vietnam}
\email{lelong@hueuni.edu.vn}

\author{Nguyen T. Hoa}
\address{Department of Mathematics\\
	University of Education, Hue University\\
	34 Le Loi, Hue, Vietnam}
\email{nguyenthihoa2252000@gmail.com}

\author{Nguyen T.P. Nhi}
\address{Department of Mathematics\\
	University of Education, Hue University\\
	34 Le Loi, Hue, Vietnam}
\email{nhimeo293@gmail.com}

\author{Phan T.T. Nhan}
\address{Department of Mathematics\\
	University of Education, Hue University\\
	34 Le Loi, Hue, Vietnam}
\email{nhan9715@gmail.com}

\date{\today}

\begin{abstract}
Given an ACM set $\X$ of points in a multiprojective space $\pmpn$
over a field of characteristic zero, we are interested in studying
the K\"ahler different and the Cayley-Bacharach property for $\X$.
In $\bbP^1\times \bbP^1$, the Cayley-Bacharach property agrees with
the complete intersection property and it is characterized
by using the K\"ahler different. However, this result fails to hold
in $\pmpn$ for $n>1$ or $m>1$.
In this paper we start an investigation of the K\"ahler different
and its Hilbert function and then prove that $\X$ is
a complete intersection of type $(d_1,...,d_m,d'_1,...,d'_n)$
if and only if it has the Cayley-Bachrach property and
the K\"ahler different is non-zero at a certain degree.
When $\X$ has the $(\star)$-property, we characterize
the Cayley-Bacharach property of $\X$ in terms of its components
under the canonical projections.
\end{abstract}

\keywords{
ACM set of points, complete intersection, Cayley-Bacharach property,
K\"ahler different.}

\subjclass[2010]{Primary 13C40, 14M05; Secondary  13C13, 14M10}

\maketitle

%
%

\section{Introduction} \label{intro}

Let $\X$ be a finite set of points in the multiprojective space
$\pmpn$ over a field $K$ of characteristic zero, let
$I_\X \subseteq S:=K[X_0,...,X_m,Y_0,...,Y_n]$
be the bihomogeneous vanishing ideal of~$\X$, and let
$R_\X =S/I_\X$ be the bigraded coordinate ring of~$\X$.
The set $\X$ is called \textit{arithmetically Cohen-Macaulay (ACM)}
if $R_\X$ is a Cohen-Macaulay ring,
and $\X$ is called a \textit{complete intersection of type
$(d_1,...,d_m,d'_1,...,d_n)$} if $I_\X$ is generated
by a bihomogeneous regular sequence $\{F_1,...,F_m,G_1,...,G_n\}$
with $\deg(F_i)=(d_i,0)$ for $i=1,...,m$
and $\deg(G_j)=(0,d'_j)$ for $j=1,...,n$.
The study of special classes of finite sets of points such as
ACM sets of points, complete intersections, etc.
in a multiprojective space is a very active field of research
and has been attracted by many authors.
For instance, the work on finding a classification of ACM
set of points includes
\cite{GJ2019, GuVT2004, GuVT2008, GuVT2008b, Mar2009, Tuy2003}
and the work on complete intersections includes
\cite{CN2020, GKLL2017, GLL2018, KL2017}.

Obviously, every complete intersection of type
$(d_1,...,d_m,d'_1,...,d_n)$ is ACM. It is a subject of research
to understand when $\X$ is a complete intersection of type
$(d_1,...,d_m,d'_1,...,d_n)$.
One of the classical tools for studying the complete intersection
property is the K\"ahler different (see \cite{KL2017, Kun1986, SS1975}).
When $\X$ is ACM, we may assume that $R_o:=K[X_0,Y_0]$
is a Noetherian normalization of $R_\X$ and define the
\textit{K\"ahler different} $\vartheta_\X$ of $\X$ or of
the bigraded algebra $R_\X/R_o$ which is known
as the initial Fitting ideal of the K\"ahler differential module
of~$R_\X/R_o$.
In the case $m=n=1$, \cite[Proposition~7.3]{GKLL2017}
shows that an ACM set $\X$ is a complete intersection
of type $(d_1,d'_1)$ if and only if $\vartheta_\X$ contains
no separators for $\X$ of degree less than $(2r_{\X_1},2r_{\X_2})$,
where $\X_i =\pi_i(\X)$ and $r_{\X_i}$ is the regularity index
of the Hilbert function of $\X_i$ for $i=1,2$ and
$\pi_1: \pmpn\rightarrow \bbP^m$ and
$\pi_2: \pmpn\rightarrow \bbP^n$ are the canonical projections,
which in turn is equivalent to the condition
that $\X$ has the Cayley-Bacharach property.
Here, we say that $\X$ has the \textit{Cayley-Bacharach property}
if the Hilbert function of $\X\setminus \{p\}$ is independent
of the choice of $p\in \X$. A nice history about the study
of the Cayley-Bacharach property of a finite set of points in
the projective space can be found in \cite{KLR2019}.
Notice that the above result of~\cite{GKLL2017} does not
hold true in general, for instance when $m>1$ or $n>1$
as Example~\ref{Exam-S4-13} shows. But if $\X\subseteq \pmpn$ is
a complete intersection of type $(d_1,...,d_m,d'_1,...,d_n)$,
then it still has the Cayley-Bacharach property and
$\vartheta_\X$ contains no separators for $\X$ of degree
less than $(2r_{\X_1},2r_{\X_2})$.
It is natural to ask which additional conditions
make an ACM set of points $\X$ with Cayley-Bacharach property
being a complete intersection of type $(d_1,...,d_m,d'_1,...,d_n)$.

Working on this question, in this paper we prove
the following result.
\begin{theorem}[Theorem~\ref{Thm-S4-10}]\label{TheoremA}
	For a set $\X$ of $s$ distinct points in $\pmpn$,
	the following are equivalent.
	\begin{enumerate}
		\item[(a)] $\X = CI(d_1,...,d_m,d'_1,...,d'_n)$ for some
		positive integers $d_i,d'_j\ge 1$.
		\item[(b)] $\X = \X_1\times \X_2$ has the Cayley-Bacharach
		property and $\HF_{\vartheta_{\X}}(r_{\X_1},r_{\X_2})\ne 0$.
	\end{enumerate}
\end{theorem}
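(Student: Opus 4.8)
The plan is to prove both implications by exploiting the product structure, under which every relevant invariant of $\X$ factors through the corresponding invariants of its two projections $\X_1\subseteq\bbP^m$ and $\X_2\subseteq\bbP^n$. For the implication (a)$\Rightarrow$(b), suppose $I_\X=(F_1,\dots,F_m,G_1,\dots,G_n)$ with $\deg F_i=(d_i,0)$ and $\deg G_j=(0,d'_j)$. Since $F_i$ involves only $X_0,\dots,X_m$ and $G_j$ only $Y_0,\dots,Y_n$, the sequences $F_1,\dots,F_m$ and $G_1,\dots,G_n$ are regular and cut out complete intersections $\X_1\subseteq\bbP^m$ and $\X_2\subseteq\bbP^n$ with $V(I_\X)=\X_1\times\X_2$; hence $\X=\X_1\times\X_2$ and $r_{\X_1}=\sum_i(d_i-1)$, $r_{\X_2}=\sum_j(d'_j-1)$. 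A complete intersection has the Cayley-Bacharach property, as noted in the introduction. For the K\"ahler different I would compute the Jacobian of the regular sequence with respect to $X_1,\dots,X_m,Y_1,\dots,Y_n$: this matrix is block diagonal, so $\vartheta_\X$ is generated by $\det(\partial F_i/\partial X_k)\cdot\det(\partial G_j/\partial Y_l)$, an element of bidegree $(\sum_i(d_i-1),\sum_j(d'_j-1))=(r_{\X_1},r_{\X_2})$. Since $\X$ is reduced and $\operatorname{char}K=0$, this Jacobian does not vanish identically on $\X$, giving $\HF_{\vartheta_\X}(r_{\X_1},r_{\X_2})\ne0$.

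For the implication (b)$\Rightarrow$(a), the strategy is to force each factor $\X_i$ to be a complete intersection, whence $\X$ is a complete intersection of the product type. For a product of reduced point sets one has $I_\X=I_{\X_1}S+I_{\X_2}S$ and $R_\X\cong R_{\X_1}\otimes_K R_{\X_2}$ as bigraded algebras; in particular $R_\X$ is Cohen-Macaulay (so $\X$ is ACM and $R_o=K[X_0,Y_0]$ is a Noether normalization) and $\HF_\X(a,b)=\HF_{\X_1}(a)\HF_{\X_2}(b)$. Writing $A=K[X_0]$ and $B=K[Y_0]$, the splitting of $I_\X$ yields $\Omega_{R_\X/R_o}\cong(\Omega_{R_{\X_1}/A}\otimes_K R_{\X_2})\oplus(R_{\X_1}\otimes_K\Omega_{R_{\X_2}/B})$, and since the K\"ahler different is the initial Fitting ideal and $\operatorname{Fitt}_0$ of a direct sum is the product of the $\operatorname{Fitt}_0$'s, one gets $\vartheta_\X=(\vartheta_{\X_1}R_\X)(\vartheta_{\X_2}R_\X)$, whence $\HF_{\vartheta_\X}(a,b)=\HF_{\vartheta_{\X_1}}(a)\,\HF_{\vartheta_{\X_2}}(b)$. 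Therefore $\HF_{\vartheta_\X}(r_{\X_1},r_{\X_2})\ne0$ gives $\HF_{\vartheta_{\X_1}}(r_{\X_1})\ne0$ and $\HF_{\vartheta_{\X_2}}(r_{\X_2})\ne0$. Finally, analysing separators of a point $(p,q)\in\X_1\times\X_2$ as products of separators of $p$ in $\X_1$ and $q$ in $\X_2$ (equivalently, invoking the component description of the Cayley-Bacharach property for sets with the $(\star)$-property, which a genuine product enjoys), the Cayley-Bacharach property of $\X$ descends to that of $\X_1$ and of $\X_2$.

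At this point each $\X_i$ is a set of points in a single projective space that has the Cayley-Bacharach property and whose K\"ahler different is nonzero in its regularity degree $r_{\X_i}$, and the crux is the single-space criterion that such an $\X_i$ is a complete intersection. I would deduce this from the comparison of differents: one always has $\vartheta_{\X_i}\subseteq\delta_{\X_i}$ for the Dedekind different $\delta_{\X_i}$, with $\X_i$ a complete intersection precisely when $\vartheta_{\X_i}=\delta_{\X_i}$, while the Cayley-Bacharach property controls the graded structure of $\delta_{\X_i}$ so tightly that a nonzero K\"ahler-different element in degree $r_{\X_i}$ forces this equality. Writing then $\X_1=CI(d_1,\dots,d_m)$ and $\X_2=CI(d'_1,\dots,d'_n)$, the union of the two regular sequences is a bihomogeneous regular sequence of the prescribed bidegrees generating $I_\X$, so $\X=CI(d_1,\dots,d_m,d'_1,\dots,d'_n)$. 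I expect the main obstacle to be precisely this single-space step---both justifying the factorisation $\vartheta_\X=(\vartheta_{\X_1}R_\X)(\vartheta_{\X_2}R_\X)$ rigorously at the level of graded modules and, above all, controlling the Dedekind different under the Cayley-Bacharach hypothesis tightly enough that degree-$r_{\X_i}$ nonvanishing of $\vartheta_{\X_i}$ genuinely forces $\vartheta_{\X_i}=\delta_{\X_i}$.
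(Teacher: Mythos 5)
Your proposal is correct and follows essentially the same route as the paper: factor $\vartheta_\X=\vartheta_{\X_1}R_\X\cdot\vartheta_{\X_2}R_\X$ over the product (Proposition~\ref{Prop-S4-05}, obtained there from the block-diagonal Jacobian rather than your tensor-decomposition-plus-Fitting-ideal argument, which is an equivalent and in fact slightly stronger derivation since it gives $\HF_{\vartheta_\X}(a,b)=\HF_{\vartheta_{\X_1}}(a)\HF_{\vartheta_{\X_2}}(b)$), descend the Cayley-Bacharach property to the factors (Proposition~\ref{Prop-S4-12} --- note its nontrivial half is the degree formula $\deg_\X(q\times q')=(\deg_{\X_1}(q),\deg_{\X_2}(q'))$, which needs the decomposition $I_\X=I_{\X_1}S+I_{\X_2}S$; products of separators alone give only the inequality $\preceq$), and then invoke the single-projective-space criterion. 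The ``crux'' you isolate --- a Cayley-Bacharach scheme $\X_i\subseteq\bbP^m$ with $\HF_{\vartheta_{\X_i}}(r_{\X_i})\ne 0$ is a complete intersection --- is precisely \cite[Theorem~5.6]{KL2017}, which the paper simply cites as a black box and whose proof indeed proceeds by the Dedekind-different comparison you sketch, so your plan amounts to re-proving the cited result rather than taking a different path.
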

Also, when $\X$ satisfies the $(\star)$-property
(see \cite[Definition~4.2]{GuVT2004}), we look
closely at the Cayley-Bacharach property for $\X$.
If we write $\X_1 =\pi_1(\X) =\{q_1,...,q_{s_1}\}\subseteq\bbP^m$
and $\X_2=\pi_2(\X)=\{q'_1,...,q'_{s_2}\}\subseteq\bbP^m$
and put
$$
W_i := \pi_2(\pi_1^{-1}(q_i)\cap \X) \subseteq \X_2,
\quad
V_j := \pi_1(\pi_2^{-1}(q'_j)\cap \X) \subseteq \X_1
$$
for $i=1,...,s_1$ and $j=1,...,s_2$, then we obtain
the following characterization of
the Cayley-Bacharach property for $\X$.
\begin{theorem}[Theorem~\ref{Thm-S5-02}]\label{TheoremB}
Suppose that $\X\subseteq\pmpn$ has the $(\star)$-property.
Then $\X$ has the Cayley-Bacharach property if and only if
the following conditions are satisfied:
\begin{enumerate}
	\item[(a)]  $V_1,...,V_{s_2}$ are Cayley-Bacharach schemes in $\bbP^m$
	and $r_{V_1}=\cdots = r_{V_{s_2}}$;
	\item[(b)] $W_1,...,W_{s_1}$ are Cayley-Bacharach schemes in $\bbP^n$
	and $r_{W_1}=\cdots = r_{W_{s_1}}$.
\end{enumerate}
\end{theorem}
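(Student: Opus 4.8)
The plan is to reduce the Cayley-Bacharach property of $\X$ to a statement about separators and then to decompose each separator according to the two projections. For a point $p\in\X$ one has $\HF_{\X\setminus\{p\}}(a,b)=\HF_\X(a,b)-\epsilon_p(a,b)$, where $\epsilon_p(a,b)\in\{0,1\}$ records whether $p$ admits a separator of bidegree $(a,b)$, that is, a bihomogeneous form vanishing on $\X\setminus\{p\}$ but not at $p$; the value lies in $\{0,1\}$ because $\X\setminus\{p\}$ differs from $\X$ by a single reduced point, so the separator module is at most one-dimensional in each bidegree. Hence $\X$ has the Cayley-Bacharach property if and only if the separator support $\Sigma_p:=\{(a,b):\epsilon_p(a,b)=1\}$ is independent of $p$. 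Writing $p=(q_i,q'_j)$, so that $q_i\in V_j$ and $q'_j\in W_i$, the heart of the argument is the claim that, under the $(\star)$-property,
\[
\Sigma_{(q_i,q'_j)}=\{(a,b): a\ge \deg_{V_j}(q_i),\ b\ge \deg_{W_i}(q'_j)\},
\]
where $\deg_{V_j}(q_i)$ and $\deg_{W_i}(q'_j)$ denote the minimal separator degrees of $q_i$ in $V_j\subseteq\bbP^m$ and of $q'_j$ in $W_i\subseteq\bbP^n$.

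For the inclusion ``$\subseteq$'' (which does not use $(\star)$) I would start from a separator $H$ of $p$ of bidegree $(a,b)$ and restrict it to the two coordinate fibres through $p$. Substituting the coordinates of $q_i$ into the $X$-variables yields a nonzero form of degree $b$ in the $Y$-variables that vanishes on $W_i\setminus\{q'_j\}$ but not at $q'_j$, forcing $b\ge\deg_{W_i}(q'_j)$; symmetrically, substituting the coordinates of $q'_j$ into the $Y$-variables forces $a\ge\deg_{V_j}(q_i)$.

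For the reverse inclusion ``$\supseteq$'' I would take minimal separators $F$ of $q_i$ in $V_j$ (of degree $\deg_{V_j}(q_i)$, viewed in bidegree $(\deg_{V_j}(q_i),0)$) and $G$ of $q'_j$ in $W_i$ (in bidegree $(0,\deg_{W_i}(q'_j))$) and show that $FG$ is a separator of $p$; multiplying by suitable powers of generic linear forms not vanishing at $q_i$ respectively $q'_j$ then fills out the whole rectangle. The only point to verify is that $FG$ vanishes at every $p'=(q_{i'},q'_{j'})\in\X\setminus\{p\}$. This is automatic unless $q_{i'}\notin V_j$ and $q'_{j'}\notin W_i$, that is, unless neither rectangle completion $(q_{i'},q'_j)$ nor $(q_i,q'_{j'})$ lies in $\X$; it is exactly here that I would invoke the $(\star)$-property, which guarantees that for any two points of $\X$ at least one of their two rectangle completions again belongs to $\X$ (a consequence of the nested-fibre structure imposed by $(\star)$). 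Reconciling the precise formulation of $(\star)$ with this rectangle-completion statement is where I expect the main difficulty to lie.

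Finally, I would combine the claim with the separator reformulation of the Cayley-Bacharach property. Since a rectangle is determined by its corner, $\Sigma_{(q_i,q'_j)}$ is independent of $(i,j)$ if and only if both $\deg_{V_j}(q_i)$ and $\deg_{W_i}(q'_j)$ are constant over all admissible pairs. Fixing $j$ and letting $i$ range over the points $q_i\in V_j$, constancy of $\deg_{V_j}(q_i)$ is precisely the statement that $V_j$ is a Cayley-Bacharach scheme in $\bbP^m$, in which case the common value equals $r_{V_j}$; constancy across $j$ then yields $r_{V_1}=\cdots=r_{V_{s_2}}$, which is condition (a). The symmetric analysis of $\deg_{W_i}(q'_j)$ gives condition (b), completing the equivalence.
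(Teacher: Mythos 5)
Your proposal is correct, and its core construction coincides with the paper's: the product $FG$ of minimal separators together with the three-case analysis (where the mixed case $q_{i'}\ne q_i$, $q'_{j'}\ne q'_j$ is killed by rectangle completion) is exactly the paper's proof of Proposition~\ref{Prop-S5-01}, and your final assembly matches Theorem~\ref{Thm-S5-02} up to bookkeeping. Two genuine differences are worth recording. First, for the lower bound $a\ge\deg_{V_j}(q_i)$, $b\ge\deg_{W_i}(q'_j)$ on an arbitrary separator, you restrict to the coordinate fibres by substituting the coordinates of $q_i$ (resp.\ $q'_j$); the paper instead argues by contradiction, comparing with the auxiliary product subscheme $\Y=V_j\times\{q'_j\}\subseteq\X$ via the degree formula from the proof of Proposition~\ref{Prop-S4-12}, and it relies on $|\deg_\X(p)|=1$, i.e.\ on the ACM property, which it imports from the $(\star)$-property through \cite{GJ2019} and Theorem~\ref{Thm-S4-07}(c). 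Your substitution argument is more elementary and yields the full rectangle description $\Sigma_p=D_{(\deg_{V_j}(q_i),\deg_{W_i}(q'_j))}$ directly, without routing through ACM or uniqueness of the minimal separator degree; together with your $\epsilon_p\in\{0,1\}$ normalization it also reproves the relevant case of Theorem~\ref{Thm-S4-07}(b) rather than citing it. Second, the difficulty you anticipate in reconciling the $(\star)$-property with the rectangle-completion statement is vacuous: the paper's definition of $(\star)$ (from \cite[Definition~4.2]{GuVT2008}) is verbatim that statement --- for $q_1\times q'_1,\,q_2\times q'_2\in\X$, either $q_1\times q'_2$ or $q_2\times q'_1$ lies in $\X$ --- so no nested-fibre argument is needed there. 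One citation you should make explicit: your step ``the common value equals $r_{V_j}$'' uses that a finite set in projective space always contains a point whose degree equals the regularity index (\cite[Proposition~1.14]{GKR1993}); the paper invokes the same fact, though it then runs the nested chain $V_1\supseteq\cdots\supseteq V_{s_2}$ with the inequalities $\deg_{V_j}(q_i)\le r_{V_j}\le r_{V_1}$, whereas you apply the fact to each $V_j$ and $W_i$ directly --- both routes are sound.
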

Using Theorem~\ref{TheoremB}, in $\bbP^1\times\bbP^n$
we can drop the condition $\X=\X_1\times\X_2$ in part (b)
of Theorem~\ref{TheoremA} and get the following consequence.
\begin{theorem}[Corollary~\ref{Cor-S5-06}]
Suppose that $\X\subseteq\bbP^1\times \bbP^n$ has the $(\star)$-property.
Then $\X = CI(d_1,d'_1,...,d'_n)$ for some positive integers
$d_1,d'_1,...,d'_n\ge 1$
if and only if $\X$ has the Cayley-Bacharach property
and $\HF_{\vartheta_\X}(d_1-1,r_{\X_2})\ne 0$.
\end{theorem}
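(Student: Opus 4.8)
The plan is to obtain this from Theorems~\ref{TheoremA} and~\ref{TheoremB}, the real content being that the $(\star)$-property lets us drop the hypothesis $\X=\X_1\times\X_2$ required by Theorem~\ref{TheoremA}. Throughout I exploit that the first factor is $\bbP^1$: any finite set of $t$ points in $\bbP^1$ is a complete intersection $CI(t)$, is automatically a Cayley--Bacharach scheme, and has regularity index $t-1$. The forward implication is then immediate: if $\X=CI(d_1,d'_1,\dots,d'_n)$, the generator of bidegree $(d_1,0)$ lies in $K[X_0,X_1]$ and cuts out $\X_1$, so $|\X_1|=d_1$ and $r_{\X_1}=d_1-1$; Theorem~\ref{TheoremA}, (a)$\Rightarrow$(b), then gives the Cayley--Bacharach property and $\HF_{\vartheta_\X}(r_{\X_1},r_{\X_2})\ne0$, which is the asserted $\HF_{\vartheta_\X}(d_1-1,r_{\X_2})\ne0$.

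For the converse, suppose $\X$ has the $(\star)$-property and the Cayley--Bacharach property. By Theorem~\ref{TheoremB} the Cayley--Bacharach property yields condition~(a): the schemes $V_1,\dots,V_{s_2}$ are Cayley--Bacharach with $r_{V_1}=\cdots=r_{V_{s_2}}$. Over $\bbP^1$ the Cayley--Bacharach condition is automatic and the equality of regularity indices is simply $|V_1|=\cdots=|V_{s_2}|$. The decisive point is that the $(\star)$-property forces the family $\{V_1,\dots,V_{s_2}\}$ to be totally ordered by inclusion; a nested family of finite sets of equal cardinality is constant, so all $V_j$ coincide. Since $\bigcup_j V_j=\pi_1(\X)=\X_1$, this common set is $\X_1$, whence $(q_i,q'_j)\in\X$ for all $i,j$ and therefore $\X=\X_1\times\X_2$.

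Feeding this back into Theorem~\ref{TheoremA} finishes the argument: with $\X=\X_1\times\X_2$ and $\X_1\subseteq\bbP^1$ we have $r_{\X_1}=|\X_1|-1$, and the earlier analysis of the range in which $\HF_{\vartheta_\X}$ can be nonzero identifies the test degree $d_1-1$ with $r_{\X_1}$, so the hypothesis reads $\HF_{\vartheta_\X}(r_{\X_1},r_{\X_2})\ne0$; Theorem~\ref{TheoremA}, (b)$\Rightarrow$(a), then yields $\X=CI(|\X_1|,d'_1,\dots,d'_n)=CI(d_1,d'_1,\dots,d'_n)$. The main obstacle is the structural step showing that the $V_j$ form a chain under inclusion, so that equality of their one-dimensional regularity indices can be upgraded to genuine equality of sets; a secondary point is the degree bookkeeping identifying $d_1-1$ with $r_{\X_1}$, which rests on the vanishing behaviour of $\HF_{\vartheta_\X}$ for products established earlier in the paper.
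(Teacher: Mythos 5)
Your proposal is correct and follows essentially the same route as the paper: the paper deduces the corollary from Theorem~\ref{Thm-S4-10} together with Proposition~\ref{Prop-S5-05}, and your converse argument is exactly the proof of that proposition inlined --- Theorem~\ref{Thm-S5-02} combined with the facts that every finite $V\subseteq\bbP^1$ is a Cayley--Bacharach complete intersection with $r_V=|V|-1$ and that the $(\star)$-property makes $V_1\supseteq\cdots\supseteq V_{s_2}$ a chain (the paper quotes this from \cite[Lemma~3.4]{GJ2019}), forcing $\X=\X_1\times\X_2$. Your identification $d_1-1=r_{\X_1}$ via $\X_1=CI(d_1)$ in $\bbP^1$ is the same implicit bookkeeping the paper relies on.
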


The paper is organized as follows. In Section~2 we fix the notation
and recall the definitions of the border of the Hilbert function of $\X$ and
the K\"ahler differential modules $\Omega^1_{R_\X/K}$ and $\Omega^1_{R_\X/R_o}$.
In particular, we use a presentation of $\Omega^1_{R_\X/K}$ (see Theorem~\ref{Thm-S2-08})
and its relation with $\Omega^1_{R_\X/R_o}$ to give a formula for
the Hilbert function of $\Omega^1_{R_\X/R_o}$ when $\X$ is ACM (see Proposition~\ref{Prop-S2-11}). In Section~3 we take a closed look at the
K\"ahler different $\vartheta_{\X}$ of an ACM set of points $\X$ in~$\pmpn$.
We provide several basic properties of the Hilbert function of $\vartheta_{\X}$
and its border. Section~4 contains the first main result (Theorem~\ref{Thm-S4-10})
which characterize $\X = CI(d_1,...,d_m,d'_1,...,d'_n)$ using the K\"ahler different
and the Cayley-Bacharach property. In this special case we describe explicitly
the Hilbert function of $\vartheta_{\X}$ and its border (see Proposition~\ref{Prop-S4-05}
and Corollary~\ref{Cor-S4-05}). In the final section, we restrict our attention
to the finite sets of points in $\pmpn$ having the $(\star)$-property.
In this setting, we relate the degree of a point $q_i\times q'_j\in \X$ to
degrees of points in $W_i$ and $V_j$ (see Proposition~\ref{Prop-S5-01}).
This enables us to prove a characterization of the Cayley-Bacharach property of $\X$
(see Theorem~\ref{Thm-S5-02}) and derive some consequences in $\bbP^1\times \bbP^n$
(see Proposition~\ref{Prop-S5-05} and Corollary~\ref{Cor-S5-06}).
All examples in this paper were calculated using the computer algebra system
ApCoCoA~\cite{ApC}.

\bigbreak
\section{The K\"ahler Differential Modules} \label{Sec2}

Let $K$ be a field of characteristic zero, let $m,n\ge 1$
be positive integers. For $(i_1,j_1), (i_2,j_2)\in \Z^2$,
we write  $(i_1,j_1) \preceq (i_2,j_2)$ if $i_1\le j_1$
and $i_2\le j_2$.  The bigraded coordinate ring of $\pmpn$
is the polynomial ring $S=K[X_0,\dots,X_m,Y_0,\dots,Y_n]$
equipped with the $\Z^2$-grading defined by
$\deg(X_0)=\cdots=\deg(X_m) = (1,0)$ and
$\deg(Y_0)=\cdots=\deg(Y_n) = (0,1)$.
For $(i,j)\in \Z^2$, we let $S_{i,j}$
be the homogeneous component of degree $(i,j)$ of~$S$,
i.e., the $K$-vector space with basis
$$
\{X_0^{\alpha_0}\cdots X_m^{\alpha_m}
\cdot Y_0^{\beta_0} \cdots Y_n^{\beta_n}
\mid \textstyle{\sum_{k=0}^m} \alpha_k = i,\,
\textstyle{\sum_{k=0}^n} \beta_k = j,\,
\alpha_k,\beta_k\in\Z \,\}.
$$

Given an ideal $I\subseteq S$, we set
$I_{i,j}:=I\cap S_{i,j}$ for all
$(i,j) \in \Z^2$.
The ideal $I$ is called \textit{bihomogeneous}
if $I = \bigoplus_{(i,j)\in \Z^2} I_{i,j}$.
If~$I$ is a bihomogeneous ideal of~$S$
then the quotient ring $S/I$ also inherits the structure
of a bigraded ring via
$(S/I)_{i,j}:= S_{i,j}/I_{i,j}$
for all $(i,j)\in \Z^2$.

A finitely generated $S$-module $M$ is
a {\it bigraded $S$-module} if it has a direct
sum decomposition
\[
M = \bigoplus_{(i,j) \in \Z^2} M_{i,j}\]
with the property that
$S_{(i_1,j_1)}M_{(i_2,j_2)}\subseteq M_{i_1+i_2,j_1+j_2}$
for all $(i_1,j_1),(i_2,j_2) \in \Z^2$.

\begin{definition}
Let $M$ be a finitely generated bigraded $S$-module.
The {\it Hilbert function} of~$M$ is the numerical function
$\HF_{M}: \Z^2 \rightarrow \N$ defined by
\[
\HF_{M}(i,j) := \dim_K M_{i,j}
\quad \mbox{for all $(i,j)\in \Z^2$}.
\]
In particular, for a bihomogeneous ideal $I$ of $S$,
the Hilbert function of $S/I$ satisfies
\[
\HF_{S/I}(i,j) := \dim_k (S/I)_{i,j}
= \dim_k S_{i,j} - \dim_k I_{i,j}
\quad \mbox{for all $(i,j)\in \Z^2$}.
\]
\end{definition}

If~$M$ is a finitely generated bigraded $S$-module
such that $\HF_M(i,j) =0$ for
$(i,j) \nsucceq (0,0)$,
we write the Hilbert function of~$M$ as an infinite matrix,
where the initial row and column are indexed by~0.

A point in the space $\pmpn$ has the form
$$
p= [a_0:a_1:\dots:a_m] \times
[b_0:b_1:\dots:b_n] \in \pmpn
$$
where $[a_0:a_1:\dots:a_m] \in \bbP^m$
and $[b_0:b_1:\dots:b_n]\in\bbP^n$.
Its vanishing ideal is the bihomogeneous prime ideal
of the form
$$
I_p = \langle \ell_1,\dots,\ell_m,
\ell'_1,\dots, \ell'_n \rangle \subseteq S
$$
where $\deg(\ell_i) = (1,0)$ and $\deg(\ell'_j) = (0,1)$
for $1\le i\le m, 1\le j \le n$.

\begin{definition}
Let $s\ge 1$ and let
$\X =\{p_1,\dots, p_s\}$ be a set of $s$ distinct points
in~$\pmpn$. The \textit{bihomogeneous vanishing ideal} of $\X$
is given by  $I_\X = I_{p_1}\cap\cdots\cap I_{p_s}$
and its \textit{bigraded coordinate ring} is $R_\X = S/I_\X$.
\end{definition}

In what follows, let $\X =\{p_1,\dots, p_s\}$ be a set of
$s$ distinct points in~$\pmpn$, and let $x_i$ and $y_j$ denote
the images of $X_i$ and $Y_j$ in $R_\X$ for $0\le i\le m$
and $0\le j\le n$. We write $\HF_\X$ for the Hilbert function
of $R_\X$ and call it the Hilbert function of $\X$.
It is worth to noting here that a bihomogeneous element
is a zerodivisor of $R_\X$ if and only if
it vanishes at some points of $\X$.

\begin{convention}\label{Conv-S2-03}
Given the canonical projections $\pi_1: \pmpn\rightarrow\bbP^m$
and $\pi_2: \pmpn\rightarrow\bbP^n$, we let
$\X_1 = \pi_1(\X)$, $s_1 = |\X_1|$, $\X_2 = \pi_2(\X)$,
and $s_2 =|\X_2|$. The set $\X_1$ has its homogeneous vanishing ideal
$I_{\X_1}\subseteq K[X_0,...,X_m]$ and its homogeneous
coordinate ring $R_{\X_1}=K[X_0,...,X_m]/I_{\X_1}$.
Similarly, $\X_2$ has its homogeneous vanishing ideal
$I_{\X_2}\subseteq K[Y_0,...,Y_n]$ and its homogeneous
coordinate ring $R_{\X_2}=K[Y_0,...,Y_n]/I_{\X_2}$.
\end{convention}

Notice that there exists a linear
form $\ell\in K[X_0,...,X_m]$ such that
$\ell$ does not vanish at any point of $\X_1$.
Analogously, we find a linear form
$\ell' \in K[Y_0,...,Y_n]$ which does not
vanish at any point of $\X_2$.
It follows that $\overline{\ell}, \overline{\ell}' \in R_\X$ are
non-zerodivisors (see also e.g. \cite[Lemma~1.2]{GuVT2004}).
As a consequence of this fact and \cite[Proposition 1.9]{SVT2006}
and \cite[Proposition 4.6]{Tuy2002},
we get several basis properties of the Hilbert function of~$\X$.

\begin{proposition}\label{Prop-S2-04}
Let $(i,j)\in \Z^2$ with $(i,j) \succeq (0,0)$.
\begin{enumerate}
	\item[(a)] We have
		$\HF_\X(i,j) \le \min\{\HF_\X((i+1,j),\HF_\X((i,j+1)\}\le s$.

	\item[(b)] If $\HF_\X(i,j) = \HF_\X(i+1,j)$
		then $\HF_\X(i,j) = \HF_\X(i+2,j)$.
		Also, $\HF_{\X}(i,j)=\HF_\X(s_1-1,j)$ for
		$i\ge s_1-1$ and $j<s_2-1$.

	\item[(c)] If $\HF_\X(i,j) = \HF_\X(i,j+1)$
	then $\HF_\X(i,j) = \HF_\X(i,j+2)$.
	Also, $\HF_{\X}(i,j)=\HF_\X(i,s_2-1)$ for
	$i< s_1-1$ and $j\ge s_2-1$.

	\item[(d)] We have $\HF_{\X}(i,j)=s$ for all $(i,j)\succeq(s_1-1,s_2-1)$.
\end{enumerate}
\end{proposition}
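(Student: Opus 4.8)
The plan is to derive all four assertions from two structural inputs: the injectivity of multiplication by the non-zerodivisors $\overline{\ell},\overline{\ell}'$, which governs the monotonicity and persistence of $\HF_\X$, and an explicit fiberwise analysis of the evaluation map together with the construction of separators, which pins down the bound $s$ and the stabilization indices $s_1-1$ and $s_2-1$.

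For part (a) I would first note that, since $\overline{\ell}\in R_\X$ is a non-zerodivisor of degree $(1,0)$, multiplication by $\overline{\ell}$ is an injective $K$-linear map $(R_\X)_{i,j}\hookrightarrow (R_\X)_{i+1,j}$, whence $\HF_\X(i,j)\le \HF_\X(i+1,j)$; the analogous map induced by $\overline{\ell}'$ gives $\HF_\X(i,j)\le \HF_\X(i,j+1)$. For the bound by $s$, I would use the evaluation map $\mathrm{ev}_{i,j}\colon (R_\X)_{i,j}\to K^s$ sending the class of a form $F$ to its tuple of values at fixed affine representatives of $p_1,\dots,p_s$; its kernel is exactly the image of $(I_\X)_{i,j}$, so $\mathrm{ev}_{i,j}$ is injective and $\HF_\X(i,j)\le s$.

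For the first assertions of (b) and (c) I would argue by propagation of surjectivity. If $\HF_\X(i,j)=\HF_\X(i+1,j)$, then the injective map $\cdot\,\overline{\ell}\colon (R_\X)_{i,j}\to (R_\X)_{i+1,j}$ is a bijection, so $(R_\X)_{i+1,j}=\overline{\ell}\,(R_\X)_{i,j}$. Since $(R_\X)_{i+2,j}=\langle x_0,\dots,x_m\rangle (R_\X)_{i+1,j}$, substituting yields $(R_\X)_{i+2,j}=\overline{\ell}\,\langle x_0,\dots,x_m\rangle(R_\X)_{i,j}=\overline{\ell}\,(R_\X)_{i+1,j}$, so $\cdot\,\overline{\ell}$ is also surjective, hence bijective, in the next degree; thus $\HF_\X(i+1,j)=\HF_\X(i+2,j)$ and the value persists. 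Part (c) follows verbatim with $\overline{\ell}'$ and the $y_j$.

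For the stabilization statements in (b), (c) and for (d), I would analyze $\mathrm{ev}_{i,j}$ fiberwise. Grouping the points of $\X$ by their image $q\in\X_1$ under $\pi_1$, a bidegree $(i,j)$ form restricts over the fiber of $q$ to a degree-$j$ form in the $Y$-variables evaluated on $W_q:=\pi_2(\pi_1^{-1}(q)\cap\X)$, so $\im(\mathrm{ev}_{i,j})\subseteq\bigoplus_{q\in\X_1}\im(\mathrm{ev}^{W_q}_{j})$ and $\HF_\X(i,j)\le \sum_{q\in\X_1}\HF_{W_q}(j)$. Conversely, for $i\ge s_1-1$ one can separate the $s_1$ fibers: for each $q$ there is a degree-$i$ form in the $X$-variables vanishing at the other points of $\X_1$ but not at $q$ (a product of $s_1-1$ suitable hyperplanes), and multiplying it by an arbitrary lift of a prescribed section on $W_q$ realizes any element of $\bigoplus_q \im(\mathrm{ev}^{W_q}_{j})$. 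Hence $\HF_\X(i,j)=\sum_q \HF_{W_q}(j)$ is constant for $i\ge s_1-1$, which is the second assertion of (b), and the symmetric argument gives (c). Finally, taking also $j\ge s_2-1\ge r_{W_q}$ forces $\HF_{W_q}(j)=|W_q|$, so $\HF_\X(i,j)=\sum_q|W_q|=s$, proving (d). The main obstacle is exactly this fiberwise analysis: one must verify both the containment in the direct sum of fiberwise images and that, for $i\ge s_1-1$, the separators make the fiber contributions independent so equality holds. This is the content of the border results of \cite{SVT2006, Tuy2002}, which I would invoke if a fully self-contained treatment is not desired, the monotonicity and persistence in (a)--(c) then being routine consequences of the non-zerodivisor property.
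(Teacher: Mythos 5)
Your proposal is correct, and it is in fact more than the paper itself offers: the paper gives no proof of Proposition~\ref{Prop-S2-04}, deriving it instead from the existence of the non-zerodivisors $\overline{\ell},\overline{\ell}'$ together with citations to \cite[Proposition~1.9]{SVT2006} and \cite[Proposition~4.6]{Tuy2002}. Your treatment of (a) and of the persistence statements in (b)--(c) — injectivity of multiplication by $\overline{\ell}$, and the propagation-of-surjectivity step using $(R_\X)_{i+2,j}=\langle x_0,\dots,x_m\rangle(R_\X)_{i+1,j}$ — is exactly the mechanism the paper deploys in its own nearby proofs (compare Lemma~\ref{Lem-S2-05} and Lemma~\ref{Lem-S3-05}, which run the same argument with $x_0$), so there you are reconstructing the intended route. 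Where you genuinely add content is the fiberwise analysis replacing the Van Tuyl citations: the containment $\im(\mathrm{ev}_{i,j})\subseteq\bigoplus_{q\in\X_1}\im(\mathrm{ev}^{W_q}_j)$ is sound (after fixing affine representatives, which you correctly do, since a bihomogeneous form has no well-defined value at a point otherwise), and for $i\ge s_1-1$ the degree-$(s_1-1)$ separators of points of $\X_1$, padded by powers of a non-vanishing linear form, do realize every element of the direct sum, giving the explicit stabilized value $\HF_\X(i,j)=\sum_{q\in\X_1}\HF_{W_q}(j)$. This is slightly stronger than the statement being proved — it establishes the stabilization in (b) for all $j$, not merely $j<s_2-1$, and (d) then follows from $r_{W_q}\le |W_q|-1\le s_2-1$ — and it has the incidental merit of introducing the fiber sets $W_q$ that the paper only brings in later, in Section~\ref{Sec5}. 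In short: same skeleton as the literature the paper leans on, but self-contained, with no gaps I can find; the only cost is length, and the only caution worth recording is that the separator construction genuinely needs $i\ge s_1-1$, so your equality of images is claimed, correctly, only in that range.
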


For $k,l \in \N$ set
$
\nu_k := \min\{ i\in\N \mid \HF_\X(i,k)= \HF_\X(i+1,k)\}
$
and
$
\varrho_l := \min\{j\in\N \mid \HF_\X(l,j)= \HF_\X(l,j+1)\}.
$
Let
$\nu:=\sup\{\nu_k \mid k\in \N \}$ and
$\varrho:=\sup\{\varrho_l \mid l \in \N\}$.
In view of Proposition~\ref{Prop-S2-04}, we have
$(\nu,\varrho)\preceq (s_1-1,s_2-1)$. Especially,
$(\nu,\varrho)=(s_1-1,s_2-1)$ if $m=n=1$.
Moreover, the tuple $(\nu,\varrho)$ can be described
by the following lemma.

\begin{lemma}\label{Lem-S2-05}
Let $k,l\in \N$.
If $\HF_\X(i,k)= \HF_\X(i+1,k)$ then $\HF_\X(i,k+1)= \HF_\X(i+1,k+1)$;
and if $\HF_\X(l,j)= \HF_\X(l,j+1)$ then
$\HF_\X(l+1,j)= \HF_\X(l+1,j+1)$.
In particular, we have
$(\nu,\varrho)=(r_{\X_1},r_{\X_2})$, where
$r_{\X_k}$ is the regularity index of $\HF_{\X_k}$ for $k=1,2$.
\end{lemma}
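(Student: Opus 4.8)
The plan is to establish the two displayed implications first; the assertion about $(\nu,\varrho)$ then follows formally. The two implications are interchanged by swapping the roles of the variables $X_0,\dots,X_m$ and $Y_0,\dots,Y_n$ (equivalently, of the two factors and of $\X_1,\X_2$), so I would prove only the first one and invoke this symmetry for the second. Throughout I would use the two non-zerodivisors fixed above, $\overline{\ell}\in(R_\X)_{1,0}$ and $\overline{\ell}'\in(R_\X)_{0,1}$, together with the elementary fact that $R_\X$ is generated as a $K$-algebra in degrees $(1,0)$ and $(0,1)$.

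For the first implication the key is to rephrase the equality of Hilbert functions as a statement about multiplication by $\overline{\ell}$. Since $\overline{\ell}$ is a non-zerodivisor, the map $\cdot\,\overline{\ell}\colon (R_\X)_{i,k}\to(R_\X)_{i+1,k}$ is injective; as the hypothesis $\HF_\X(i,k)=\HF_\X(i+1,k)$ forces the two spaces to have the same finite dimension, this map is an isomorphism, i.e. $(R_\X)_{i+1,k}=\overline{\ell}\,(R_\X)_{i,k}$. I would then multiply this identity by each $y_a$ and sum over $a=0,\dots,n$: since $k+1\ge 1$, every monomial of bidegree $(i+1,k+1)$ carries a factor $Y_a$, whence $(R_\X)_{i+1,k+1}=\sum_{a=0}^n y_a\,(R_\X)_{i+1,k}$. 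Substituting $(R_\X)_{i+1,k}=\overline{\ell}\,(R_\X)_{i,k}$ and using $y_a(R_\X)_{i,k}\subseteq (R_\X)_{i,k+1}$ gives $(R_\X)_{i+1,k+1}=\overline{\ell}\,(R_\X)_{i,k+1}$, and applying the non-zerodivisor property of $\overline{\ell}$ once more in bidegree $(i,k+1)\to(i+1,k+1)$ yields $\HF_\X(i,k+1)=\HF_\X(i+1,k+1)$. It is perhaps cleaner to package this through the quotient $A:=R_\X/\overline{\ell}R_\X$: the hypothesis says $A_{i+1,k}=0$, and peeling off a $Y$-variable gives $A_{i+1,k+1}=\sum_a \bar y_a A_{i+1,k}=0$.

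Granting both implications, I would deduce the description of $(\nu,\varrho)$. The first implication applied at $i=\nu_k$ shows $\HF_\X(\nu_k,k+1)=\HF_\X(\nu_k+1,k+1)$, so $\nu_{k+1}\le\nu_k$; hence $(\nu_k)_{k\in\N}$ is non-increasing and $\nu=\sup_k\nu_k=\nu_0$. It then remains to identify $\nu_0$. A form of bidegree $(i,0)$ vanishes on $\X$ exactly when it vanishes on $\X_1=\pi_1(\X)$, so $(I_\X)_{i,0}=(I_{\X_1})_i$ and therefore $\HF_\X(i,0)=\HF_{\X_1}(i)$ for all $i$. Since $\HF_{\X_1}$ is the Hilbert function of a finite set of points in $\bbP^m$, it increases strictly until it attains the value $s_1$ and is constant thereafter, so its first point of repetition is precisely the regularity index $r_{\X_1}$; this gives $\nu_0=r_{\X_1}$ and hence $\nu=r_{\X_1}$. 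The symmetric argument yields $\varrho=r_{\X_2}$.

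The only step that requires an actual idea rather than bookkeeping is the propagation in the first implication. The monotonicity and stabilization properties recorded in Proposition~\ref{Prop-S2-04} are by themselves insufficient (one can write down an abstract integer matrix obeying all of them yet with $\nu_1>\nu_0$), so the proof must use the ring structure of $R_\X$ — concretely, its generation in degrees $(1,0)$ and $(0,1)$ — in an essential way. I expect the main care to go into that generation/peeling argument and into the two ancillary facts $\HF_\X(i,0)=\HF_{\X_1}(i)$ and ``first repetition $=$ regularity index''; the remainder is formal.
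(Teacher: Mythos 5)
Your proposal is correct and follows essentially the same route as the paper: multiplication by the non-zerodivisor $\overline{\ell}$ is injective, so the dimension hypothesis forces $(R_\X)_{i+1,k}=\overline{\ell}\,(R_\X)_{i,k}$, and since $R_\X$ is standard bigraded one has $(R_\X)_{i+1,k+1}=(R_\X)_{i+1,k}\cdot(R_\X)_{0,1}=\overline{\ell}\,(R_\X)_{i,k+1}$, giving the implication; monotonicity $\nu_{k+1}\le\nu_k$ then yields $\nu=\nu_0=r_{\X_1}$ exactly as in the paper. Your only additions — the quotient packaging $A=R_\X/\overline{\ell}R_\X$ and the explicit justification of $\nu_0=r_{\X_1}$ via $\HF_\X(i,0)=\HF_{\X_1}(i)$ and strict growth of Hilbert functions of points in $\bbP^m$ — are details the paper leaves implicit, not a different argument.
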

\begin{proof}
As in the argument before the Proposition~\ref{Prop-S2-04},
we find $\ell\in S_{1,0}$ and $\ell'\in S_{0,1}$ such that
their images $\bar{\ell}, \bar{\ell}'$ in $R_\X$ are non-zerodivisors.
Then we have
$$
\begin{aligned}
\HF_\X(i,k+1)&= \dim_K((R_\X)_{i,k}\cdot (R_\X)_{0,1})
= \dim_K(\bar{\ell}\cdot (R_\X)_{i,k}\cdot (R_\X)_{0,1})\\
&= \dim_K((R_{\X})_{i+1,k}\cdot (R_\X)_{0,1})
= \HF_\X(i+1,k+1),
\end{aligned}
$$
where the second equality follows from the fact that
$\bar{\ell}\in (R_\X)_{1,0}$ is a non-zerodivisor of $R_{\X}$
and the third equality induces by assumption that
$\HF_\X(i,k)= \HF_\X(i+1,k)$.
Analogously, by using the non-zerodivisor $\bar{\ell}'\in (R_\X)_{0,1}$,
we have $\HF_\X(l+1,j)= \HF_\X(l+1,j+1)$ when $\HF_\X(l,j)= \HF_\X(l,j+1)$.
Consequently, we get $\nu_k\ge \nu_{k+1}$ for all $k\in \mathbb{N}$
and $\varrho_l\ge \varrho_{l+1}$ for all $l\in \mathbb{N}$, and
hence $\nu=\nu_0=r_{\X_1}$ and $\varrho = \varrho_0=r_{\X_2}$.
\end{proof}

The lemma leads us to the following definition, which
agrees with \cite[Definition~4.9]{Tuy2002}
if $(\nu,\varrho)=(s_1-1,s_2-1)$.

\begin{definition}
Let $r_{\X_1}$, $r_{\X_2}$ be regularity indices of $\HF_{\X_1}$
and $\HF_{\X_2}$, respectively. The pair $B_\X=(B_C, B_R)$, where
$$
B_C=(\HF_\X(r_{\X_1},0), \HF_\X(r_{\X_1},1),\dots,\HF_\X(r_{\X_1},r_{\X_2}))
$$
and
$$
B_R=(\HF_\X(0,r_{\X_2}),\HF_\X(1,r_{\X_2}),\dots, \HF_\X(r_{\X_1},r_{\X_2})),
$$
is called the \textit{border of the Hilbert function} of~$\X$.
\end{definition}

\begin{example}\label{Exam-S2-07}
Let $K=\Q$, let $\X=\{p_1,...,p_9\}$ be a set of nine points
in~$\bbP^2\times\bbP^2$ given by $p_1=q_1\times q_1$,
$p_2=q_1\times q_2$, $p_3=q_1\times q_3$, $p_4=q_1\times q_4$,
$p_5=q_2\times q_1$, $p_6=q_2\times q_2$, $p_7=q_2\times q_3$,
$p_8=q_3\times q_1$ and $p_9 = q_3\times q_2$,
where $q_1 =(1:0:0)$, $q_2=(1:1:0)$, $q_3=(1:0:1)$,
$q_4=(1:1:1)$ in $\bbP^2$.
Then $\X_1=\{q_1,q_2,q_3\}$, $s_1=3$,
$\X_2 = \{q_1,q_2,q_3,q_4\}$ and $s_2=4$.
The Hilbert function of $\X$ is given by
$$
\HF_\X = \begin{bmatrix}
1&3&4&4&\cdots\\
3&8&9&9&\cdots\\
3&8&9&9&\cdots\\
3&8&9&9&\cdots\\
\vdots&\vdots&\vdots&\vdots&\ddots
\end{bmatrix},
$$
and so $r_{\X_1}=1$ and $r_{\X_2}=2$.
The border of the Hilbert function of~$\X$
is given by $B_\X = ((3,8,9),(4,9))$.
In this case we have $r_{\X_1}<2=s_1-1$ or $r_{\X_2} <3=s_2-1$,
and $\HF_\X(i,j)=s=6$ for all $(i,j)\succeq (r_{\X_1},r_{\X_2})$.
\end{example}

In the bigraded enveloping algebra $R_\X \otimes_K R_\X$
we have the bihomogeneous ideal $J=\ker(\mu)$,
where $\mu:\ R_\X\otimes_K R_\X \rightarrow R_\X$
is the bihomogeneous $R_\X$-linear map given by
$\mu(f\otimes g)=fg$.
The bigraded $R_\X$-module $\Omega^1_{R_\X/K}= J/J^2$
is called the {\it module of K\"{a}hler differentials}
of~$R_\X/K$. The bihomogeneous $K$-linear map
$d_{R_\X/K}:R_\X \rightarrow \Omega^1_{R_\X/K}$
given by $f\mapsto f\otimes 1-1\otimes f+J^2$ satisfies
the universal property. We call $d$
the \textit{universal derivation} of $R_\X/K$.
More generally, for any bigraded $K$-algebra $T/R$ we
can define in the same way the K\"{a}hler differential module
$\Omega^1_{T/R}$, and the universal derivation of $T/R$
(cf. \cite[Section~2]{Kun1986}).
Note that
$$
\Omega^1_{S/K} = \textstyle{\bigoplus\limits_{i=0}^m} SdX_i
\oplus \textstyle{\bigoplus\limits_{j=0}^n} SdY_j
\cong S^{m+1}(-1,0)\oplus S^{n+1}(0,-1)
$$
and $\Omega^1_{R_\X/K} =
\langle dx_i, dy_j \mid 0\le i\le m, 0\le j\le n \rangle_{R_\X}$.
Especially, the Hilbert function of $\Omega^1_{R_\X/K}$ can be
computed by using the following theorem
(see \cite[Theorem 3.5]{GLL2018}).

\begin{theorem} \label{Thm-S2-08}
Let $\Y$ be the subscheme of $\pmpn$ defined by
the bihomogeneous ideal $I_\Y = I_{p_1}^2\cap\cdots\cap I_{p_s}^2$.
There is an exact sequence of bigraded $R_\X$-modules
$$
0\longrightarrow I_\X/I_\Y \longrightarrow
R_{\X}^{m+1}(-1,0)\oplus R_{\X}^{n+1}(0,-1)
\longrightarrow \Omega^1_{R_\X/K}\longrightarrow 0.
$$
In particular, for $(i,j)\in \Z^2$, we have
$$
\HF_{\Omega^1_{R_\X/K}}(i,j) = (m+1)\HF_\X(i-1,j)+(n+1)\HF_\X(i,j-1)
+\HF_{\X}(i,j)-\HF_{\Y}(i,j).
$$
\end{theorem}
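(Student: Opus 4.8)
The plan is to obtain the sequence by identifying the kernel of the second fundamental (conormal) exact sequence attached to the presentation $R_\X = S/I_\X$. For the surjection $S \to R_\X$ the conormal sequence gives a right-exact sequence of bigraded $R_\X$-modules
$$
I_\X/I_\X^2 \xrightarrow{\ \delta\ } \Omega^1_{S/K}\otimes_S R_\X \longrightarrow \Omega^1_{R_\X/K}\longrightarrow 0,
$$
where $\delta(f+I_\X^2) = df\otimes 1$ is $R_\X$-linear and preserves bidegree. Since $\Omega^1_{S/K}\cong S^{m+1}(-1,0)\oplus S^{n+1}(0,-1)$ is free with basis $dX_0,\dots,dX_m,dY_0,\dots,dY_n$, tensoring with $R_\X$ yields the free bigraded module $R_\X^{m+1}(-1,0)\oplus R_\X^{n+1}(0,-1)$, which is the middle term of the claimed sequence. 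Thus it suffices to show $\ker\delta = I_\Y/I_\X^2$, for then $\im\delta\cong (I_\X/I_\X^2)/(I_\Y/I_\X^2)\cong I_\X/I_\Y$ and the short exact sequence follows.

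The heart of the matter is therefore the identification $\ker\delta = I_\Y/I_\X^2$. Because the middle module is free over $R_\X$ with the indicated basis, $\delta(f+I_\X^2)=0$ precisely when every partial $\partial f/\partial X_i$ and $\partial f/\partial Y_j$ maps to $0$ in $R_\X$, i.e. lies in $I_\X=\bigcap_{k} I_{p_k}$; equivalently, every partial of $f$ vanishes at every $p_k$. I would reduce this to a single point and prove the local statement: for $f\in I_{p_k}$ bihomogeneous of bidegree $(a,b)$, all first partials of $f$ vanish at $p_k$ if and only if $f\in I_{p_k}^2$. After a bigraded linear change of coordinates one may take $p_k=[1:0:\cdots:0]\times[1:0:\cdots:0]$, so that $I_{p_k}=\langle X_1,\dots,X_m,Y_1,\dots,Y_n\rangle$ and $I_{p_k}^2$ is spanned by the bihomogeneous forms whose every monomial has total degree at least $2$ in these ``vanishing'' variables. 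Evaluating $\partial f/\partial X_i$ and $\partial f/\partial Y_j$ (for $i,j\ge 1$) at $p_k$ extracts exactly the coefficients of the degree-one monomials $X_0^{a-1}X_iY_0^{b}$ and $X_0^{a}Y_0^{b-1}Y_j$ of $f$, so these partials all vanish at $p_k$ iff $f$ has no monomial of degree one in the vanishing variables, that is, iff $f\in I_{p_k}^2$. The remaining partials $\partial f/\partial X_0$ and $\partial f/\partial Y_0$ vanish at $p_k$ automatically: the Euler relations $\sum_{i} X_i\,\partial f/\partial X_i = (\deg_X f)\,f$ and $\sum_{j} Y_j\,\partial f/\partial Y_j = (\deg_Y f)\,f$, evaluated at $p_k$, give $\partial f/\partial X_0(p_k)=(\deg_X f)f(p_k)=0$ and similarly for $Y_0$, using $f(p_k)=0$. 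Intersecting over all $k$ then yields $\ker\delta = \bigcap_k I_{p_k}^2 / I_\X^2 = I_\Y/I_\X^2$.

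Once the short exact sequence is in hand, the Hilbert function formula follows by additivity of $\HF$ on short exact sequences of bigraded modules, together with the degree shifts $\HF_{R_\X^{m+1}(-1,0)}(i,j)=(m+1)\HF_\X(i-1,j)$ and $\HF_{R_\X^{n+1}(0,-1)}(i,j)=(n+1)\HF_\X(i,j-1)$, and the identity $\HF_{I_\X/I_\Y}(i,j)=\HF_{S/I_\Y}(i,j)-\HF_{S/I_\X}(i,j)=\HF_\Y(i,j)-\HF_\X(i,j)$. Combining these gives the stated expression for $\HF_{\Omega^1_{R_\X/K}}(i,j)$.

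I expect the main obstacle to be the kernel computation $\ker\delta=I_\Y/I_\X^2$, i.e. the passage from ``all partials of $f$ vanish at each $p_k$'' to ``$f\in I_{p_k}^2$''. This is exactly where the smoothness of the reduced points and the bihomogeneous structure are used: the easy inclusion $I_\Y/I_\X^2\subseteq\ker\delta$ is a direct consequence of the product rule, whereas the reverse inclusion requires the local coordinate analysis and the Euler relations above. Everything else is formal, resting on the conormal sequence, the freeness of $\Omega^1_{S/K}$, and additivity of the Hilbert function.
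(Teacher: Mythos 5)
Your proof is correct, and it is essentially the argument behind the cited source: the paper itself gives no proof of Theorem~\ref{Thm-S2-08}, quoting it from \cite[Theorem 3.5]{GLL2018}, whose proof (like the earlier $\bbP^n$ case in \cite{DK1999}) likewise runs through the conormal sequence $I_\X/I_\X^2 \to \Omega^1_{S/K}\otimes_S R_\X \to \Omega^1_{R_\X/K}\to 0$ and identifies the kernel with $I_\Y/I_\X^2$ by checking, point by point after a linear change of coordinates, that vanishing of all first partials at $p_k$ characterizes membership in $I_{p_k}^2$ (with the Euler relations handling $\partial/\partial X_0$, $\partial/\partial Y_0$, and characteristic zero entering exactly there). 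Your kernel computation and the additivity argument for the Hilbert function are both sound, so your write-up is a valid self-contained proof of the quoted statement.
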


Notice that $R_\X$ has the Krull dimension 2,
but $1\leq \mathrm{depth}(R_\X)\leq 2$ (see \cite[Section~2]{Tuy2003}).
In case $\mathrm{depth}(R_\X)$ attains the maximal value,
we have the following notion.

\begin{definition}
We say that $\X$ is \textit{arithmetically Cohen-Macaulay (ACM)}
if we have $\mathrm{depth}(R_\X)=2$.
\end{definition}

When $\X$ is ACM, then there exist
two linear form $\ell\in S_{1,0}$, $\ell' \in S_{0,1}$
such that $\overline{\ell}$ and $\overline{\ell}'$
give rise to a regular sequence in $R_\X$
(see \cite[Proposition~3.2]{Tuy2003}).
After a change of coordinates, we can assume that
$\ell=X_0$ and $\ell'=Y_0$, so that
$x_0,y_0$ form a regular sequence in~$R_\X$.
In this case we set $R_o:=K[x_0,y_0]$. Then
$$
R_\X =S/I_\X = R_o[x_1,...,x_n,y_1,...,y_m]
$$
is a finitely generated, bigraded $R_o$-module,
and the monomorphism  $R_o\hookrightarrow R_\X$
defines a Noetherian normalization.

\begin{remark}\label{Rem-S2-10}
	Note that the \textit{Euler derivation} of $R_\X/K$ is
	given by $\epsilon: R_\X \rightarrow R_\X, f\mapsto (i+j)f$
	for $f\in (R_\X)_{i,j}$ (see \cite[Section~1]{Kun1986}).
	Set $\fm := \langle x_0,...,x_m,y_0,...,y_n\rangle_{R_\X}$.
	By the universal property of $\Omega^1_{R_\X/K}$,
	this induces a bihomogeneous surjective $R_\X$-linear map
	$\gamma: \Omega^1_{R_\X/K}\rightarrow \fm$
	with $\gamma(dx_i)=x_i$ and $\gamma(dy_j)=y_j$
	for all $i,j$.
	In particular, $\mathrm{Ann}_{R_\X}(dx_0)
	=\mathrm{Ann}_{R_\X}(dy_0)	=\langle 0\rangle$,
	since $x_0,y_0$ are non-zerodivisors of~$R_\X$.
\end{remark}

There are relations between
$\Omega^1_{R_\X/K}$ and $\Omega^1_{R_\X/R_o}$ as follows.

\begin{proposition}\label{Prop-S2-11}
Let $\X$ be an ACM set of $s$ distinct points
in~$\pmpn$. There exists an exact sequence of bigraded $R_\X$-modules
$$
0 \rightarrow R_\X dx_0\oplus R_\X dy_0
\hookrightarrow
\Omega^1_{R_\X/K} \stackrel{\psi}{\longrightarrow}
\Omega^1_{R_\X/R_o} \rightarrow 0
$$
where $\psi(gdf)=gd_{R_\X/R_o}f$ for $f,g\in R_\X$.
In particular, we have
$$
\HF_{\Omega^1_{R_\X/R_o}}(i,j)
= m\HF_\X(i-1,j)+ n\HF_\X(i,j-1)
+\HF_{\X}(i,j)-\HF_{\Y}(i,j)
$$
for all $(i,j)\in \N^2$, where $\Y$ is the subscheme
of $\pmpn$ defined by $I_\Y = I_{p_1}^2\cap\cdots\cap I_{p_s}^2$.
\end{proposition}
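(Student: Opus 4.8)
\section*{Proof proposal}

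The plan is to build the sequence from the second fundamental (Jacobi--Zariski) exact sequence attached to the tower of $K$-algebras $K \subseteq R_o \subseteq R_\X$, namely
$$\Omega^1_{R_o/K} \otimes_{R_o} R_\X \xrightarrow{\ \alpha\ } \Omega^1_{R_\X/K} \xrightarrow{\ \psi\ } \Omega^1_{R_\X/R_o} \longrightarrow 0,$$
where $\alpha$ sends $df\otimes g$ to $g\,d_{R_\X/K}f$ and $\psi(g\,d_{R_\X/K}f)=g\,d_{R_\X/R_o}f$. Since $x_0,y_0$ form a regular sequence and $R_o\hookrightarrow R_\X$ is a Noetherian normalization, $R_o=K[x_0,y_0]$ is a polynomial ring in two variables, so $\Omega^1_{R_o/K}=R_o\,dx_0\oplus R_o\,dy_0$ is $R_o$-free of rank two and hence $\Omega^1_{R_o/K}\otimes_{R_o}R_\X\cong R_\X(-1,0)\oplus R_\X(0,-1)$, with $\alpha$ carrying the two basis elements to $dx_0$ and $dy_0$. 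By exactness of the fundamental sequence, $\ker\psi=\im\alpha=R_\X\,dx_0+R_\X\,dy_0$, so exactness at $\Omega^1_{R_\X/K}$ and at $\Omega^1_{R_\X/R_o}$ is automatic; the real content is the injectivity of $\alpha$, i.e.\ that $dx_0$ and $dy_0$ span an internal direct sum $R_\X\,dx_0\oplus R_\X\,dy_0$.

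To prove injectivity I would use the presentation of $\Omega^1_{R_\X/K}$ from Theorem~\ref{Thm-S2-08}. Writing $\Omega^1_{R_\X/K}$ as the cokernel of $\delta:\ I_\X/I_\Y\to R_\X^{m+1}(-1,0)\oplus R_\X^{n+1}(0,-1)$, $\bar F\mapsto(\overline{\partial F/\partial X_i})_i\oplus(\overline{\partial F/\partial Y_j})_j$, a bihomogeneous relation $a\,dx_0+b\,dy_0=0$ lifts to the free-module element $(a,0,\dots,0;\,b,0,\dots,0)$ lying in $\im\delta$. By the bigrading we may take a bihomogeneous $F\in I_\X$, say of degree $(d,e)$, with $\overline{\partial F/\partial X_0}=a$, $\overline{\partial F/\partial Y_0}=b$ and $\overline{\partial F/\partial X_k}=\overline{\partial F/\partial Y_l}=0$ for $k,l\ge 1$. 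Applying Euler's relation $\sum_{k=0}^m X_k\,\partial F/\partial X_k=d\,F$ and passing to $R_\X$ (where $\bar F=0$ since $F\in I_\X$) leaves only $x_0\,a=0$; as $x_0$ is a non-zerodivisor of $R_\X$ (Remark~\ref{Rem-S2-10}) we get $a=0$, and the analogous $Y$-Euler relation gives $b=0$. Hence $\alpha$ is injective and the short exact sequence follows.

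The Hilbert-function formula is then a direct consequence. Taking Hilbert functions along the short exact sequence and using $R_\X\,dx_0\oplus R_\X\,dy_0\cong R_\X(-1,0)\oplus R_\X(0,-1)$ gives
$$\HF_{\Omega^1_{R_\X/R_o}}(i,j)=\HF_{\Omega^1_{R_\X/K}}(i,j)-\HF_\X(i-1,j)-\HF_\X(i,j-1),$$
into which I substitute the expression for $\HF_{\Omega^1_{R_\X/K}}(i,j)$ from Theorem~\ref{Thm-S2-08}; the coefficients $m+1$ and $n+1$ each drop by one, yielding exactly $m\,\HF_\X(i-1,j)+n\,\HF_\X(i,j-1)+\HF_\X(i,j)-\HF_\Y(i,j)$.

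The main obstacle is the injectivity of $\alpha$: the fundamental sequence is exact only at its two right-hand spots, so in general $\Omega^1_{R_o/K}\otimes_{R_o}R_\X\to\Omega^1_{R_\X/K}$ may have a kernel. What rescues us is precisely the ACM hypothesis, which makes $x_0,y_0$ a regular sequence and lets the Euler relations convert the vanishing of the ``off-diagonal'' partials into $x_0a=y_0b=0$. I would stress that the weaker facts $\mathrm{Ann}_{R_\X}(dx_0)=\mathrm{Ann}_{R_\X}(dy_0)=\langle 0\rangle$ from Remark~\ref{Rem-S2-10}, or the Euler map $\gamma$ alone (which only yields $x_0a+y_0b=0$), do not by themselves force the sum to be direct; one genuinely needs the per-variable Euler identities supplied by the presentation of Theorem~\ref{Thm-S2-08}.
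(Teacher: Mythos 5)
Your proof is correct and takes essentially the same route as the paper: both obtain the sequence from Kunz's Jacobi--Zariski sequence for the tower $K \subseteq R_o \subseteq R_\X$ and then deduce the Hilbert function formula by subtracting $\HF_\X(i-1,j)+\HF_\X(i,j-1)$ and substituting the expression from Theorem~\ref{Thm-S2-08}. The one point where you go beyond the paper is welcome: the paper merely asserts $\im(\varphi)=R_\X dx_0\oplus R_\X dy_0$ (and Remark~\ref{Rem-S2-10} only yields $\mathrm{Ann}_{R_\X}(dx_0)=\mathrm{Ann}_{R_\X}(dy_0)=\langle 0\rangle$, which, as you rightly observe, does not by itself give directness of the sum), whereas your lifting of a relation $a\,dx_0+b\,dy_0=0$ through the presentation of Theorem~\ref{Thm-S2-08} and the two block-wise Euler identities, together with $x_0,y_0$ being non-zerodivisors, actually proves it.
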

\begin{proof}
By \cite[Proposition 3.24]{Kun1986}, we have an
exact sequence of bigraded $R_\X$-modules
$$
R_\X \otimes_{R_o} \Omega^1_{R_o/K}
\stackrel{\varphi}{\longrightarrow}
\Omega^1_{R_\X/K} \stackrel{\psi}{\longrightarrow}
\Omega^1_{R_\X/R_o} \rightarrow 0
$$
where $\Omega^1_{R_o/K}\cong R_odx_0\oplus R_ody_0$
and $\varphi(f\otimes (f_1dx_0+f_2dy_0)) =  ff_1dx_0+ff_2dy_0$.
Hence the claimed exact sequence follows from
$\im(\varphi) = R_\X dx_0\oplus R_\X dy_0$.
Furthermore, the Hilbert function of~$\Omega^1_{R_\X/R_o}$ satisfies
$$
\HF_{\Omega^1_{R_\X/R_o}}(i,j)=
\HF_{\Omega^1_{R_\X/K}}(i,j)
- \HF_\X(i-1,j) - \HF_\X(i,j-1).
$$
An application of Theorem~\ref{Thm-S2-08} gives
the desired formula for $\HF_{\Omega^1_{R_\X/R_o}}$.
\end{proof}

\bigbreak
\section{The K\"{a}hler different}\label{Sec3}

Let $\X=\{p_1,...,p_s\} \subseteq \pmpn$ be a ACM set of points,
suppose that $\{x_0,y_0\}$ is a regular sequence in $R_\X$,
and let $R_o=K[x_0,y_0]$. Further, let $\{F_1,...,F_r\},$
$r\geq n+m$, be a bihomogeneous system of generators of $I_\X$.
By \cite[Corollary 2.14]{Kun1986}, $\Omega^1_{R_\X/R_o}$ has
the following presentation
$$
0\rightarrow \mathcal{K} \rightarrow
\textstyle{\bigoplus\limits_{i=1}^m} R_\X dX_i
\oplus \textstyle{\bigoplus\limits_{j=1}^n} R_\X dY_j
\rightarrow \Omega^1_{R_\X/R_o}\rightarrow 0
$$
where the bigraded $R_\X$-module $\mathcal{K}$ is generated
by the elements
$\sum_{i=1}^m \frac{\partial F_k}{\partial x_i}dX_i+\sum_{j=1}^n \frac{\partial F_k}{\partial y_j}dY_j$ for $j=1,...,r.$
The Jacobian matrix
$$
\mathcal{J} :=
\begin{pmatrix} \frac{\partial F_{1}}{\partial x_1}&\cdots&
	\frac{\partial F_{1}}{\partial x_m}&
	\frac{\partial F_{1}}{\partial y_1}&\cdots & \frac{\partial F_{1}}{\partial y_n}\\
	\vdots&\ddots& \vdots& \vdots&\ddots & \vdots\\
	\frac{\partial F_{r}}{\partial x_1}&\cdots& \frac{\partial F_{r}}{\partial x_m}&
	\frac{\partial F_{r}}{\partial y_1}&\cdots & \frac{\partial F_{r}}{\partial y_n}
\end{pmatrix}
$$
is a relation matrix of $\Omega^1_{R_\X/R_o}$ with respect to $\{dx_1,...,dx_m,dy_1,...,dy_n\}.$
It is easy to see that every $m+n$-minors of $\mathcal{J}$
is a bihomogeneous element of $R_\X$.

\begin{definition}
The bihomogeneous ideal of~$R_\X$ generated by all $m+n$-minors of
the Jacobian matrix $\mathcal{J}$ is called
the \textit{K\"{a}hler different} of $\X$ and is denoted
by $\vartheta_\X$.
\end{definition}

In the same way as above, we can define the K\"{a}hler different
$\vartheta_{\X_1}$ of $\X_1=\pi_1(\X)$ (or of the graded algebra
$R_{\X_1}/K[x_0]$). Similarly, we get the K\"{a}hler different
$\vartheta_{\X_2}$ of $\X_2 =\pi_2(\X)$ (or of the graded algebra
$R_{\X_2}/K[y_0]$).
When $|\X| = 1$, we see that $\vartheta_\X=\langle 1\rangle
=\vartheta_{\X_1}R_\X\cdot\vartheta_{\X_2}R_\X$.
In general, we have the following relation.

\begin{lemma} \label{Lem-S3-02}
\begin{enumerate}
	\item[(a)] We have
	$\vartheta_{\X_1}R_\X \cdot\vartheta_{\X_2}R_\X
	\subseteq \vartheta_\X$.
	\item[(b)] $\vartheta_\X$ contains a bihomogeneous non-zerodivisor.
\end{enumerate}
\end{lemma}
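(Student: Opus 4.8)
The plan is to handle the two parts separately: part (a) reduces to recognizing a block structure in the Jacobian and computing one determinant, while part (b) rests on a localization together with the characteristic-zero smoothness of a reduced point scheme.

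For part (a), I would first note that the inclusion $K[X_0,\dots,X_m]\hookrightarrow S$ carries $I_{\X_1}$ into $I_\X$, since any form vanishing on $\X_1=\pi_1(\X)$ vanishes on all of $\pi_1^{-1}(\X_1)\supseteq\X$; this produces a ring homomorphism $R_{\X_1}\to R_\X$ (and likewise $R_{\X_2}\to R_\X$), so that $\vartheta_{\X_1}R_\X$ and $\vartheta_{\X_2}R_\X$ are meaningful. Choose bihomogeneous generators $F_1,\dots,F_a$ of $I_{\X_1}$ (involving only the $X_i$) and $G_1,\dots,G_b$ of $I_{\X_2}$ (involving only the $Y_j$). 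Because $\vartheta_\X$ is the initial Fitting ideal of $\Omega^1_{R_\X/R_o}$ and hence independent of the chosen system of generators, I may enlarge $\{F_1,\dots,F_a,G_1,\dots,G_b\}\subseteq I_\X$ to a bihomogeneous generating set of $I_\X$ and read off $\vartheta_\X$ from the corresponding Jacobian $\mathcal J$. The decisive observation is that $\partial F_i/\partial y_l=0$ and $\partial G_j/\partial x_k=0$, so the $(m+n)\times(m+n)$ submatrix of $\mathcal J$ formed from rows $F_{i_1},\dots,F_{i_m},G_{j_1},\dots,G_{j_n}$ and all columns is block-diagonal with blocks $A=(\partial F_{i_p}/\partial x_k)$ and $B=(\partial G_{j_q}/\partial y_l)$. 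Its determinant is $\det(A)\det(B)$, where $\det(A)$ runs through the generators of $\vartheta_{\X_1}$ and $\det(B)$ through those of $\vartheta_{\X_2}$ as the index sets vary; hence every such product lies in $\vartheta_\X$, giving the containment.

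For part (b), I would show that $\vartheta_\X$ contains the bihomogeneous non-zerodivisor $(x_0y_0)^N$ for a suitable $N$. Since no point of $\X$ lies on $X_0=0$ or $Y_0=0$, inverting $x_0$ and $y_0$ identifies $(R_\X)_{x_0y_0}$ with $B[x_0^{\pm1},y_0^{\pm1}]$, where $B$ is the degree-$(0,0)$ part, namely the affine coordinate ring of $\X$ in the chart $\{X_0\ne0,\,Y_0\ne0\}$, and $R_o$ localizes to $K[x_0^{\pm1},y_0^{\pm1}]$. As $\X$ is a finite set of distinct points and $\mathrm{char}(K)=0$, the ring $B$ is a reduced Artinian $K$-algebra, hence a finite product of separable field extensions, so $\Omega^1_{B/K}=0$; by base change, $\Omega^1_{B[x_0^{\pm1},y_0^{\pm1}]/K[x_0^{\pm1},y_0^{\pm1}]}\cong\Omega^1_{B/K}\otimes_B B[x_0^{\pm1},y_0^{\pm1}]=0$, so $(\Omega^1_{R_\X/R_o})_{x_0y_0}=0$. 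Because the Kähler different is $\mathrm{Fitt}_0(\Omega^1_{R_\X/R_o})$ and Fitting ideals commute with localization, $(\vartheta_\X)_{x_0y_0}=(R_\X)_{x_0y_0}$, which forces $(x_0y_0)^N\in\vartheta_\X$; as $x_0,y_0$ form a regular sequence, $(x_0y_0)^N$ is a bihomogeneous non-zerodivisor. Alternatively, one may deduce (b) from (a): the same localization argument applied to $\X_1$ and $\X_2$ gives $x_0^{N_1}\in\vartheta_{\X_1}$ and $y_0^{N_2}\in\vartheta_{\X_2}$, whence $x_0^{N_1}y_0^{N_2}\in\vartheta_{\X_1}R_\X\cdot\vartheta_{\X_2}R_\X\subseteq\vartheta_\X$.

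The routine portion is part (a), a direct minor computation once the block-diagonal shape is seen. The hard part will be part (b): one must correctly relate the graded Kähler different to the affine differential module after inverting $x_0y_0$ and then invoke étale-ness of the reduced finite point scheme in characteristic zero to get $\Omega^1_{B/K}=0$. The two technical points to handle carefully are that forming the Kähler different commutes with localization (because it is a Fitting ideal) and that the ideal equality after inverting $x_0y_0$ genuinely pulls back to the membership $(x_0y_0)^N\in\vartheta_\X$ inside $R_\X$, which uses injectivity of localization at the non-zerodivisor $x_0y_0$.
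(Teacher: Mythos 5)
Your proposal is correct, and part (a) is essentially the paper's own argument: the paper takes arbitrary $G_{11},\dots,G_{1m}\in I_{\X_1}$ and $G_{21},\dots,G_{2n}\in I_{\X_2}$, observes the block-diagonal shape of the corresponding $(m+n)$-minor, and factors it as $\frac{\partial(G_{11},\dots,G_{1m})}{\partial(x_1,\dots,x_m)}\cdot\frac{\partial(G_{21},\dots,G_{2n})}{\partial(y_1,\dots,y_n)}\in\vartheta_\X$, noting that such factors generate $\vartheta_{\X_1}R_\X$ and $\vartheta_{\X_2}R_\X$; your explicit appeal to presentation-independence of the initial Fitting ideal (to justify enlarging the chosen forms to a generating system of $I_\X$) only spells out what the paper leaves implicit. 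Part (b) is where you genuinely diverge. The paper does not localize: it simply quotes \cite[Proposition~3.5]{KLL2015} to obtain $x_0^k\in\vartheta_{\X_1}$ and $y_0^l\in\vartheta_{\X_2}$ and concludes $x_0^ky_0^l\in\vartheta_\X$ from part (a) --- exactly the ``alternative'' deduction you sketch at the end, with the one-variable input taken as a black box. Your main route --- invert $x_0y_0$, identify $(R_\X)_{x_0y_0}\cong B[x_0^{\pm1},y_0^{\pm1}]$ with $B$ a reduced finite $K$-algebra, use $\Omega^1_{B/K}=0$ in characteristic zero together with base change and the compatibility of Fitting ideals with localization, then pull back the unit-ideal equality along the non-zerodivisor $x_0y_0$ --- is sound at every step and has the merit of being self-contained, in effect reproving the cited one-variable fact rather than assuming it. What it costs is effectivity: your exponent $N$ comes from an abstract membership statement, whereas the paper's citation delivers the explicit values $k=(m+1)(s_1-1)$ and $l=(n+1)(s_2-1)$, which are not decorative --- they are reused in the proofs of Proposition~\ref{Prop-S3-03}(d) and Corollary~\ref{Cor-S3-04} to bound where $\HF_{\vartheta_\X}$ reaches the value $s$. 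So your argument fully proves the lemma as stated, but anyone replacing the paper's proof with yours should retain the effective bound for the later quantitative applications.
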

\begin{proof}
Obviously, we have $I_{\X_1}S \subseteq I_\X$ and
$I_{\X_2}S \subseteq I_\X$.
For any $G_{11},...,G_{1m}\in I_{\X_1}$ and
$G_{21},...,G_{2n}\in I_{\X_2}$, we have
$\{G_{11},...,G_{1m},G_{21},...,G_{2n}\}\subseteq I_\X$, and so
$$
\begin{aligned}
	\det&\begin{pmatrix} \frac{\partial G_{11}}{\partial x_1}&\cdots& \frac{\partial G_{11}}{\partial x_m}&
		\frac{\partial G_{11}}{\partial y_1}&\cdots & \frac{\partial G_{11}}{\partial y_n}\\
		\cdots&\cdots& \cdots& \cdots&\cdots & \cdots\\
		\frac{\partial G_{2n}}{\partial x_1}&\cdots&
		\frac{\partial G_{2n}}{\partial x_m}& \frac{\partial G_{2n}}{\partial y_1}&
		\cdots & \frac{\partial G_{2n}}{\partial y_n}
	\end{pmatrix}\\
	&= \frac{\partial (G_{11},...,G_{1m})}{\partial (x_1,...,x_m)} \cdot
	\frac{\partial (G_{21},...,G_{2n})}{\partial (y_1,...,y_n)}
	\in \vartheta_\X,
\end{aligned}
$$
where $\frac{\partial (G_{11},...,G_{1m})}{\partial (x_1,...,x_m)}$
denotes the image of the Jacobian determinant
$\frac{\partial (G_{11},...,G_{1m})}{\partial (X_1,...,X_m)}$
in~$R_\X$ (similarly for $\frac{\partial (G_{21},...,G_{2n})}{\partial (y_1,...,y_n)}$).
Moreover, $\vartheta_{\X_1}R_\X$ is generated by elements of the form
$\frac{\partial (G_{11},...,G_{1m})}{\partial (x_1,...,x_m)}$, and
$\vartheta_{\X_2}R_\X$ is generated by elements of the form
$\frac{\partial (G_{21},...,G_{2n})}{\partial (y_1,...,y_n)}$,
and therefore $\vartheta_{\X_1}R_\X \cdot\vartheta_{\X_2}R_\X
\subseteq \vartheta_\X$ and (a) follows.

To prove (b), observe that $x_0^iy_0^j \in R_\X$ is a bihomogeneous
non-zerodivisor for every $i,j\ge 0$.
By \cite[Proposition~3.5]{KLL2015}, there are $k,l\in \N$ such that $x_0^{k}\in \vartheta_{\X_1}$ and $y_0^l\in \vartheta_{\X_2}$.
Hence the non-zerodivisor $x_0^ky_0^l$ belongs to $\vartheta_\X$
by~(a).
\end{proof}

Some fundamental properties of the Hilbert function of $\vartheta_\X$
are given in the following proposition.

\begin{proposition} \label{Prop-S3-03}
Let $s_1=|\X_1|$ and $s_2=|\X_2|$.
\begin{enumerate}
	\item[(a)] For all $(i,j)\in \N^2$, we have
	$\HF_{\vartheta_\X}(i,j)\le \min\{\HF_{\vartheta_\X}(i+1,j),
	\HF_{\vartheta_\X}(i,j+1)\}$.

	\item[(b)] For all $i,j\in\N$, we have
	$\HF_{\vartheta_\X}(i,0)\le \HF_{\X_1}(i)$ and
	$\HF_{\vartheta_\X}(0,j)\le \HF_{\X_2}(j).$

	\item[(c)] If $s_1=1$ then $\HF_{\vartheta_\X}(i,j)=\HF_{\vartheta_{\X_2}}(j)$
	for all $(i,j)\in\N^2$; and if $s_2=1$ then $\HF_{\vartheta_\X}(i,j)=\HF_{\vartheta_{\X_1}}(i)$
	for all $(i,j)\in\N^2$.

	\item[(d)] For all $(i,j)\in\N^2$, we have
	$$
	\HF_{\vartheta_\X}(i,j)\le \HF_{\X}(i,j)\le
    \HF_{\vartheta_\X}(i+(m+1)(s_1-1),j+(n+1)(s_2-1)).
    $$
\end{enumerate}
\end{proposition}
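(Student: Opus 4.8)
The plan is to handle the four parts separately, with parts (a)--(c) following from elementary manipulations of Hilbert functions and non-zerodivisors, and the second inequality of~(d) carrying the actual content.

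For~(a) I would use that $x_0$ and $y_0$ are non-zerodivisors of $R_\X$ of bidegrees $(1,0)$ and $(0,1)$. Since $\vartheta_\X$ is an ideal, multiplication by $x_0$ sends $(\vartheta_\X)_{i,j}$ into $(\vartheta_\X)_{i+1,j}$ and is injective, so $\HF_{\vartheta_\X}(i,j)\le \HF_{\vartheta_\X}(i+1,j)$; the same argument with $y_0$ gives $\HF_{\vartheta_\X}(i,j)\le \HF_{\vartheta_\X}(i,j+1)$, and combining yields~(a). For~(b) the key point is the identity $\HF_\X(i,0)=\HF_{\X_1}(i)$: a bihomogeneous element of $(I_\X)_{i,0}$ involves only the variables $X_0,\dots,X_m$ and vanishes on $\X$ precisely when it vanishes on $\X_1=\pi_1(\X)$, so $(I_\X)_{i,0}=(I_{\X_1})_i$ as subspaces of $S_{i,0}=K[X_0,\dots,X_m]_i$. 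Then $(\vartheta_\X)_{i,0}\subseteq (R_\X)_{i,0}$ gives $\HF_{\vartheta_\X}(i,0)\le \HF_\X(i,0)=\HF_{\X_1}(i)$, and the statement for $\HF_{\vartheta_\X}(0,j)$ is symmetric.

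For~(c), suppose $s_1=1$ and, after a change of coordinates, that $\X_1=\{(1:0:\dots:0)\}$. Then $X_1,\dots,X_m\in I_\X$, we have $\X=\{q_1\}\times\X_2$, and $R_\X\cong R_{\X_2}[x_0]$ with $x_0$ of bidegree $(1,0)$. I would compute $\vartheta_\X$ from the generating set $\{X_1,\dots,X_m,G_1,\dots,G_t\}$ of $I_\X$, where $G_1,\dots,G_t\in K[Y_0,\dots,Y_n]$ generate $I_{\X_2}$. In the Jacobian matrix the rows coming from $X_1,\dots,X_m$ form an identity block in the first $m$ columns while the rows coming from the $G_k$ vanish there, so every non-zero $(m+n)$-minor must include all of the rows $X_1,\dots,X_m$ and then reduces to an $n$-minor of $(\partial G_k/\partial y_j)$. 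Hence $\vartheta_\X=\vartheta_{\X_2}R_\X$, and since $\vartheta_{\X_2}$ is supported in bidegrees $(0,\ast)$ we get $(\vartheta_\X)_{i,j}=x_0^i(\vartheta_{\X_2})_j$, so $\HF_{\vartheta_\X}(i,j)=\HF_{\vartheta_{\X_2}}(j)$; the case $s_2=1$ is symmetric.

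For~(d) the first inequality is immediate from $(\vartheta_\X)_{i,j}\subseteq (R_\X)_{i,j}$. For the second, the strategy is to exhibit a non-zerodivisor of $R_\X$ that lies in $\vartheta_\X$ and has bidegree exactly $((m+1)(s_1-1),(n+1)(s_2-1))$, and then multiply by it. Using the degree bound for the Kähler different of a reduced set of points in projective space (see \cite[Proposition~3.5]{KLL2015}, combined with $r_{\X_1}\le s_1-1$), one gets $(\vartheta_{\X_1})_{(m+1)(s_1-1)}=(R_{\X_1})_{(m+1)(s_1-1)}$, so in particular $x_0^{(m+1)(s_1-1)}\in\vartheta_{\X_1}$; symmetrically $y_0^{(n+1)(s_2-1)}\in\vartheta_{\X_2}$. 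By Lemma~\ref{Lem-S3-02}(a) the product $g:=x_0^{(m+1)(s_1-1)}y_0^{(n+1)(s_2-1)}$ then lies in $\vartheta_\X$ and is a non-zerodivisor, so multiplication by $g$ is an injective $K$-linear map from $(R_\X)_{i,j}$ into $(\vartheta_\X)_{i+(m+1)(s_1-1),\,j+(n+1)(s_2-1)}$, which gives the claimed inequality. The main obstacle is precisely this last step: producing the specific power $x_0^{(m+1)(s_1-1)}$ in $\vartheta_{\X_1}$ (rather than merely some unspecified power, as in Lemma~\ref{Lem-S3-02}(b)) requires the explicit degree estimate for the Kähler different of points in $\bbP^m$, which is where the factor $(m+1)$ enters; once that is in hand, the rest is routine bookkeeping with non-zerodivisors.
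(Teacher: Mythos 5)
Your proposal is correct and follows essentially the same route as the paper's proof: the non-zerodivisors $x_0,y_0$ for (a), the identification $\HF_\X(i,0)=\HF_{\X_1}(i)$ for (b) (which the paper cites from \cite[Proposition~3.2]{Tuy2002} rather than proving directly as you do), the block structure of the Jacobian yielding $\vartheta_\X=\vartheta_{\X_2}R_\X$ for (c), and the non-zerodivisor $x_0^{(m+1)(s_1-1)}y_0^{(n+1)(s_2-1)}\in\vartheta_\X$ obtained from \cite[Proposition~3.5]{KLL2015} together with Lemma~\ref{Lem-S3-02}(a) for (d). The only detail the paper makes explicit that you assert without verification is that $\{X_1,\dots,X_m,G_1,\dots,G_t\}$ generates $I_\X$ when $s_1=1$, which the paper checks via the Division Algorithm; this is routine and does not affect correctness.
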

\begin{proof}
Claim (a) follows by the fact that $x_0,y_0$ are non-zerodivisors
of $R_\X$ and $\vartheta_\X$ is a bihomogeneous ideal of $R_\X$.
Note that $\HF_{\vartheta_\X}(i,0)\le \HF_{\X}(i,0)$ and
$\HF_{\vartheta_\X}(0,j)\le \HF_{\X}(0,j)$ for all $i,j\in \N$.
So, claim (b) follows from \cite[Proposition~3.2]{Tuy2002}.

To prove (c), it suffices to consider the case $s_1=1$.
In this case we may assume $q_1=[1:0:...:0]\in\bbP^m$ and
$\X=\{q_1\times q'_1,....,q_1\times q'_s\}\subseteq \pmpn$.
We claim that
$I_\X =\langle X_1,...,X_m\rangle + I_{\X_2}S$.
Clearly, $\langle X_1,...,X_m \rangle + I_{\X_2}S\subseteq I_\X$.
Now let $F\in I_\X$ be homogeneous of degree $(i,j)$.
Using the Division Algorithm
(see e.g. \cite[Proposition~1.6.4]{KR2000}),
we may present $F=\sum_{k=1}^m H_kX_k+X_0^iG$ with
$H_k\in S_{i-1,j}$ and $G\in K[Y_0,...,Y_n]$ of degree $(0,j)$.
Then
$$
G(p_1\times q_l)= (X_0^iG)(p_1\times q_l)
=(F-\textstyle{\sum\limits_{k=1}^m} H_kX_k)(p_1\times q_l) =0
$$
for all $l=1,...,s$.
This implies $G\in I_{\X_2}$, and hence
$F\in \langle X_1,...,X_m \rangle + I_{\X_2}S$.

Consequently, the ideal $I_\X$ has a bihomogeneous system
of generators of the form $\{X_1,...,X_m, G_1,...,G_t\}$,
where $\{G_1,...,G_t\}$
is a homogeneous system of generators of
$I_{\X_2}\subseteq K[Y_0,...,Y_n]$.
Observe that $\vartheta_{\X_1}=\langle 1\rangle$
and $\vartheta_\X$ is generated by elements
$\frac{\partial(X_1,...,X_m, G_{k_1},...,G_{k_n})}{\partial (x_1,...,x_m,y_1,...,y_n)}
= \frac{\partial(G_{k_1},...,G_{k_n})}{\partial (y_1,...,y_n)}$
with $\{k_1,...,k_n\}\subseteq \{1,...,t\}$.
By Lemma~\ref{Lem-S3-02}(a), $\vartheta_\X = \vartheta_{\X_2}R_{\X}$.
Moreover, $R_\X \cong K[X_0, Y_0,...,Y_n]/I_{\X_2}
\cong R_{\X_2}[x_0]$.
Since $x_0$ is a non-zerodivisor of $R_\X$, we have
$$
\HF_{\vartheta_\X}(i,j) =
\HF_{\vartheta_{\X_2}R_\X}(i,j)
=\dim_K((\vartheta_{\X_2})_{j}x_0^i)
=\HF_{\vartheta_{\X_2}}(j)
$$
for all $(i,j)\in\N^2$.

For (d), it suffices to demonstrate the inequality
$$
\HF_{\X}(i,j)\leq\HF_{\vartheta_\X}(i+(m+1)(s_1-1),j+(n+1)(s_2-1)).
$$
In the proof of Lemma~\ref{Lem-S3-02}(b),
there exist $k,l\in \N$ such that
$h := x_0^ky_0^l\in \vartheta_{\X}$.
In particular, we may choose $k=(m+1)(s_1-1)$
and $l= (n+1)(s_2-1)$ by \cite[Proposition~3.5]{KLL2015}.
So, the multiplication map
$(R_\X)_{i,j}\stackrel{\times h}{\rightarrow}(\vartheta_\X)_{(i+k,j+l)}$
is injective as $K$-vector spaces.
This yields that $\HF_{\X}(i,j)\leq\HF_{\vartheta_\X}(i+k,j+l)$.
\end{proof}

The following corollary is a direct consequence of
Propositions~\ref{Prop-S2-04}(d) and~\ref{Prop-S3-03}(d).

\begin{corollary} \label{Cor-S3-04}
	In the setting of Proposition~\ref{Prop-S3-03},
	we have $\HF_{\vartheta_\X}(i,j)=s$ for all $(i,j)\succeq((s_1-1)(m+2),(s_2-1)(n+2)).$
\end{corollary}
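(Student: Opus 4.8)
The plan is to show that $\HF_{\vartheta_\X}(i,j)$ equals $s$ throughout the stated region by trapping it between $s$ from above and from below, using only the two cited propositions. Write $a := (s_1-1)(m+2)$ and $b := (s_2-1)(n+2)$ for the threshold, and fix any $(i,j)\succeq(a,b)$.

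First I would dispose of the upper bound. Since $m\ge 1$ and $n\ge 1$ we have $a=(s_1-1)(m+2)\ge s_1-1$ and $b=(s_2-1)(n+2)\ge s_2-1$, so $(i,j)\succeq(s_1-1,s_2-1)$. Proposition~\ref{Prop-S2-04}(d) then gives $\HF_\X(i,j)=s$, and the left inequality of Proposition~\ref{Prop-S3-03}(d) yields $\HF_{\vartheta_\X}(i,j)\le\HF_\X(i,j)=s$.

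For the reverse inequality I would exploit the shift in the right inequality of Proposition~\ref{Prop-S3-03}(d). Set $i':=i-(m+1)(s_1-1)$ and $j':=j-(n+1)(s_2-1)$. The crucial arithmetic identity is $(s_1-1)(m+2)-(m+1)(s_1-1)=s_1-1$ (and likewise in the second coordinate), which shows $i'\ge a-(m+1)(s_1-1)=s_1-1$ and $j'\ge b-(n+1)(s_2-1)=s_2-1$. Hence $(i',j')\succeq(s_1-1,s_2-1)$, so Proposition~\ref{Prop-S2-04}(d) gives $\HF_\X(i',j')=s$. Plugging this into Proposition~\ref{Prop-S3-03}(d) produces
$$
s=\HF_\X(i',j')\le\HF_{\vartheta_\X}\bigl(i'+(m+1)(s_1-1),\,j'+(n+1)(s_2-1)\bigr)=\HF_{\vartheta_\X}(i,j).
$$
Combining the two bounds gives $\HF_{\vartheta_\X}(i,j)=s$, as claimed.

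Since both inequalities needed are handed to us verbatim by the cited propositions, there is no substantive obstacle here; the only thing to be careful about is the bookkeeping of the two shifts, namely verifying that subtracting $(m+1)(s_1-1)$ from the threshold $(s_1-1)(m+2)$ lands exactly on the regularity threshold $s_1-1$ where $\HF_\X$ stabilizes at $s$ (and the analogous statement in the $Y$-direction). That one-line computation is the whole content of the proof.
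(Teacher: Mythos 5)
Your proof is correct and is exactly the argument the paper intends: the paper states the corollary as a direct consequence of Propositions~\ref{Prop-S2-04}(d) and~\ref{Prop-S3-03}(d), and you have simply filled in the details, including the key arithmetic check $(s_1-1)(m+2)-(m+1)(s_1-1)=s_1-1$ (and its analogue in the second coordinate) that makes the shifted point land in the region where $\HF_\X$ equals $s$. No gaps and no deviation from the paper's route.
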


\begin{lemma}\label{Lem-S3-05}
Let $\{h_1,...,h_t\}$ be a bihomogeneous minimal system of
generators of $\vartheta_\X$, write
$\deg(h_k)=(i_k,j_k)$ for $k=1,...,t$ and set
$$
i_{\max} := \max\{i_k \mid k=1,...,t\},\quad
j_{\max} := \max\{j_k \mid k=1,...,t\},
$$
and let $(i,j)\in\N^2$.
\begin{enumerate}
	\item[(a)]
	If $i\ge i_{\max}$ and
	$\HF_{\vartheta_\X}(i,j)=\HF_{\vartheta_\X}(i+1,j)$,
	then $\HF_{\vartheta_\X}(i,j)=\HF_{\vartheta_\X}(i+2,j)$.

	\item[(b)]
	If $j\ge j_{\max}$ and
	$\HF_{\vartheta_\X}(i,j)=\HF_{\vartheta_\X}(i,j+1)$,
	then $\HF_{\vartheta_\X}(i,j)=\HF_{\vartheta_\X}(i,j+2).$
\end{enumerate}
\end{lemma}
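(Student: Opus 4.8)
The plan is to prove (a); part~(b) then follows verbatim by interchanging the roles of $x_0,X_1,\dots,X_m$ with $y_0,Y_1,\dots,Y_n$. First I would exploit that $x_0$ is a non-zerodivisor on $R_\X$ (as $x_0,y_0$ is a regular sequence), hence also on the ideal $\vartheta_\X\subseteq R_\X$. Thus for every $(a,b)$ multiplication by $x_0$ yields a short exact sequence of $K$-vector spaces
$$
0\longrightarrow (\vartheta_\X)_{a,b}\stackrel{\times x_0}{\longrightarrow}
(\vartheta_\X)_{a+1,b}\longrightarrow \overline{N}_{a+1,b}\longrightarrow 0,
$$
where $\overline{N}:=\vartheta_\X/x_0\vartheta_\X$ (note $(x_0\vartheta_\X)_{a+1,b}=x_0(\vartheta_\X)_{a,b}$). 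Therefore $\HF_{\vartheta_\X}(a+1,b)-\HF_{\vartheta_\X}(a,b)=\dim_K\overline{N}_{a+1,b}$, so the hypothesis $\HF_{\vartheta_\X}(i,j)=\HF_{\vartheta_\X}(i+1,j)$ is equivalent to $\overline{N}_{i+1,j}=0$, and the conclusion $\HF_{\vartheta_\X}(i+1,j)=\HF_{\vartheta_\X}(i+2,j)$ is equivalent to $\overline{N}_{i+2,j}=0$. Hence the entire statement reduces to the single implication $\overline{N}_{i+1,j}=0\Rightarrow\overline{N}_{i+2,j}=0$.

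To attack this I would study $\overline{N}$ as a module over $\overline{R}:=R_\X/\langle x_0\rangle$, using two elementary inputs. Since $\vartheta_\X=\sum_{k=1}^t R_\X h_k$, its residue $\overline{N}$ is generated over $\overline{R}$ by the images $\overline{h}_1,\dots,\overline{h}_t$, whose bidegrees $(i_k,j_k)$ all satisfy $i_k\le i_{\max}\le i$. The second input is the decomposition
$$
\overline{R}_{a,b}=\textstyle{\sum_{l=1}^m}\,\overline{x}_l\,\overline{R}_{a-1,b}
\qquad (a\ge 1),
$$
which holds because every monomial of $X$-degree $a\ge 1$ is divisible by some $X_l$, so $S_{a,b}=\sum_{l=0}^m X_l S_{a-1,b}$; passing to $R_\X$ and then to $\overline{R}$ kills the $l=0$ term.

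With these in hand, I would take an arbitrary homogeneous $\xi\in\overline{N}_{i+2,j}$ and write $\xi=\sum_{k=1}^t\overline{g}_k\,\overline{h}_k$ with $\overline{g}_k\in\overline{R}_{i+2-i_k,\,j-j_k}$ (terms with $j<j_k$ vanishing). Because $i+2-i_k\ge i+2-i_{\max}\ge 2$, each coefficient sits in positive $X$-degree, so the decomposition above applies: $\overline{g}_k=\sum_{l=1}^m\overline{x}_l\,\overline{g}_{k,l}$ with $\overline{g}_{k,l}\in\overline{R}_{i+1-i_k,\,j-j_k}$. Substituting and swapping the order of summation gives
$$
\xi=\textstyle{\sum_{l=1}^m}\,\overline{x}_l\Big(\textstyle{\sum_{k=1}^t}\overline{g}_{k,l}\,\overline{h}_k\Big),
$$
where for each $l$ the inner sum is a homogeneous element of $\overline{N}$ of bidegree $(i+1,j)$, hence lies in $\overline{N}_{i+1,j}=0$. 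Thus every term vanishes and $\xi=0$, proving $\overline{N}_{i+2,j}=0$ and with it~(a). The only delicate point — and the real reason for the hypothesis $i\ge i_{\max}$ — is the bookkeeping guaranteeing that every coefficient $\overline{g}_k$ lands in strictly positive $X$-degree: this is precisely what forbids a minimal generator from sitting in $X$-degree $i+1$ or $i+2$, where the variable-peeling step would break down. Everything else is routine once the problem is transported to the one-step quotient $\overline{N}=\vartheta_\X/x_0\vartheta_\X$.
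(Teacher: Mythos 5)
Your proposal is correct and is essentially the paper's own argument in a mild repackaging: the paper proves surjectivity of the multiplication map $\mu_{x_0,i+1}\colon(\vartheta_\X)_{(i+1,j)}\to(\vartheta_\X)_{(i+2,j)}$ by writing any $h\in(\vartheta_\X)_{(i+2,j)}$ as $\sum_{k=0}^m x_k g_k$ with $g_k\in(\vartheta_\X)_{(i+1,j)}$ (valid precisely because $i\ge i_{\max}$) and then substituting $g_k=x_0g'_k$, whereas you encode the same Hilbert-function differences as graded pieces of the cokernel $\overline{N}=\vartheta_\X/x_0\vartheta_\X$ and perform the identical variable-peeling there, with the quotient killing the $x_0$-term instead of the substitution step. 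The two proofs rest on exactly the same inputs ($x_0$ a non-zerodivisor and the fact that no minimal generator of $\vartheta_\X$ sits in $X$-degree above $i_{\max}$), so there is nothing to fix.
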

\begin{proof}
It suffices to prove (a), since (b) is analogous.
For $i\ge i_{\max}$, consider the multiplication map
$\mu_{x_0,i}:(\vartheta_\X)_{(i,j)}\rightarrow(\vartheta_\X)_{(i+1,j)},
h\mapsto x_0h$.
Since $\HF_{\vartheta_\X}(i,j)=\HF_{\vartheta_\X}(i+1,j)$,
$\mu_{x_0,i}$ is an isomorphism of $K$-vector spaces.
So, we have $(\vartheta_\X)_{(i+1,j)}=x_0\cdot(\vartheta_\X)_{(i,j)}.$
We need to show that $\mu_{x_0,i+1}:(\vartheta_\X)_{(i+1,j)}\rightarrow(\vartheta_\X)_{(i+2,j)}$
is also an isomorphism of $K$-vector spaces.
Clearly, $\mu_{x_0,i+1}$ is injective, as $x_0$ is a non-zerodivisor.
Now we check that $\mu_{x_0,i+1}$ is surjective.
Let $h\in (\vartheta_\X)_{(i+2,j)}\setminus\{0\}$.
Because $i\ge i_{\max}$, we may write
$h=\sum_{k=0}^m x_kg_k$ where $g_k \in (\vartheta_\X)_{i+1,j}.$
For each $k\in\{0,...,m\}$, we write $g_k=x_0g'_k$ for some
$g'_k\in (\vartheta_\X)_{(i,j)}$, and hence
$$
h = x_0g_0+\cdots+x_mg_m = x_0(x_0g'_0+\cdots+x_mg'_m)
\in x_0\cdot(\vartheta_\X)_{(i+1,j)}.
$$
Therefore $\mu_{x_0,i+1}$ is surjective, as wanted to show.
\end{proof}

From the lemma and the fact that $\HF_{\vartheta_\X}(i,j)\le s$
for all $(i,j)\in\N^2$, we have  $\HF_{\vartheta_\X}(i,j)=\HF_{\vartheta_\X}(i_{\max}+s,j)$
for all $i\ge i_{\max}+s$ and $j\in\N$
and $\HF_{\vartheta_\X}(i,j)=\HF_{\vartheta_\X}(i,s+j_{\max})$
for all $j\ge j_{\max}+s$ and $i\in\N$.

For $k,l \in \N$ set
$
\nu_k := \min\{ i\in\N \mid \HF_{\vartheta_\X}(i,k)= \HF_{\vartheta_\X}(i_{\max}+s,k)\}
$
and
$
\varrho_l := \min\{j\in\N \mid \HF_{\vartheta_\X}(l,j)= \HF_{\vartheta_\X}(l,j_{\max}+s)\}
$
and
$\nu_{\vartheta_\X}:=\sup\{\nu_k \mid k\in \N \}$ and
$\varrho_{\vartheta_\X} :=\sup\{\rho_l \mid l \in \N\}$.
Then $(\nu_{\vartheta_\X},\varrho_{\vartheta_\X}) \le (i_{\max}+s, j_{\max}+s)$
and if the values of $\HF_{\vartheta_\X}(i,j)$ for finite tuples
$(0,0)\preceq (i,j)\preceq
(\nu_{\vartheta_\X},\varrho_{\vartheta_\X})$ are computed,
then we know all values of $\HF_{\vartheta_\X}$.
This leads us to the following notion.

\begin{definition}
Let $(\nu,\varrho):=(\nu_{\vartheta_\X},\varrho_{\vartheta_\X})$.
The pair $B_{\vartheta_\X}=(B_{C,\vartheta_{\X}}, B_{R,\vartheta_{\X}})$, where
$$
B_{C,{\vartheta_\X}}=(\HF_{\vartheta_\X}(\nu,0), \HF_{\vartheta_\X}(\nu,1),\dots,
\HF_{\vartheta_\X}(\nu, \varrho))
$$
and
$$
B_{R,\vartheta_\X}=(\HF_{\vartheta_\X}(0,\varrho),\HF_{\vartheta_\X}(1,\varrho),\dots, \HF_{\vartheta_\X}(\nu,\varrho)),
$$
is called the \textit{border of the Hilbert function} of~$\vartheta_\X$.
\end{definition}

\begin{example}
Consider the set of nine points $\X \subseteq \bbP^2\times\bbP^2$
given in Example~\ref{Exam-S2-07}. We know that
$s_1=3$, $s_2=4$, $r_{\X_1}=1$, and $r_{\X_2}=2$.
Also, the set  $\X$ is ACM.
Then a bihomogeneous minimal system of generators
of $\vartheta_{\X}$ consists of 8 elements
with degrees in $\{(1,3),(2,2),(3,1),(0,5),(3,2)\}$.
This implies $i_{\max} = 3$ and $j_{\max}=5$.
The Hilbert function of $\vartheta_{\X}$ is computed by
$$
\HF_{\vartheta_{\X}}
= \begin{bmatrix}
0 & 0 & 0 & 0 & 0 & 1 & 1 & \cdots \\
0 & 0 & 0 & 1 & 2 & 2 & 2 & \cdots \\
0 & 0 & 3 & 8 & 9 & 9 & 9 & \cdots \\
0 & 1 & 6 & 8 & 9 & 9 & 9 & \cdots \\
0 & 1 & 6 & 8 & 9 & 9 & 9 & \cdots \\
\vdots&\vdots&\vdots&\vdots&\vdots&\vdots&\vdots&\ddots
\end{bmatrix}.
$$
It follows that $\nu_{\vartheta_\X}= i_{\max}=3$
and $\varrho_{\vartheta_\X} = j_{\max}=5$ and
the border of~$\HF_{\vartheta_{\X}}$
is $B_{\vartheta_\X} = ((0,1,6,8,9,9),(1,2,9,9))$.
\end{example}

If a bihomogeneous minimal system of $\vartheta_\X$
is given, we can compute the tuple $(\nu_{\vartheta_\X},\varrho_{\vartheta_\X})$ using
the following lemma.

\begin{lemma}
Let $\{h_1,...,h_t\}$ be a bihomogeneous minimal system of
generators of $\vartheta_\X$ with $\deg(h_k)=(i_k,j_k)$ for $k=1,...,t$. Put
$$
i_{\min} := \min\{i_k \mid k=1,...,t\},\quad
j_{\min} := \min\{j_k \mid k=1,...,t\}.
$$
Then $\nu_{\vartheta_\X} = \max\{\nu_{j_{\min}},\dots, \nu_{j_{\max}}\}$
and $\varrho_{\vartheta_\X} = \max\{  \varrho_{i_{\min}},\dots, \varrho_{i_{\max}}\}$.
\end{lemma}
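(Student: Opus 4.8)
The plan is to reduce the supremum defining $\nu_{\vartheta_\X}$ over all $k\in\N$ to a maximum over the finite range $j_{\min}\le k\le j_{\max}$, by controlling how the sequence $(\HF_{\vartheta_\X}(i,k))_{i\ge 0}$ behaves as $k$ leaves this range. The technical tool I would establish first is a shifting identity: for every $j\ge j_{\max}$ and every $i\in\N$,
\[
(\vartheta_\X)_{(i,j+1)}=\textstyle\sum_{l=0}^{n} y_l\,(\vartheta_\X)_{(i,j)}.
\]
To see the nontrivial inclusion, write a bihomogeneous $h\in(\vartheta_\X)_{(i,j+1)}$ as $h=\sum_{k=1}^{t}a_k h_k$ with $a_k\in R_\X$ bihomogeneous of degree $(i-i_k,\,j+1-j_k)$; since $j\ge j_{\max}\ge j_k$ we have $j+1-j_k\ge 1$, so every nonzero $a_k$ has positive $y$-degree and therefore lies in $\langle y_0,\dots,y_n\rangle_{R_\X}$. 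Writing $a_k=\sum_l y_l b_{k,l}$ and collecting the $y_l$ shows $h\in\sum_l y_l(\vartheta_\X)_{(i,j)}$; the reverse inclusion is clear. A symmetric identity with the $x_l$ holds for all $i\ge i_{\max}$.

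Next I would handle the sequences $(\HF_{\vartheta_\X}(i,k))_{i\ge0}$ for $k$ outside $[j_{\min},j_{\max}]$. If $k<j_{\min}$ then every generator satisfies $j_l\ge j_{\min}>k$, so $(\vartheta_\X)_{(i,k)}=0$ for all $i$; the entire sequence vanishes and hence $\nu_k=0$. If $k\ge j_{\max}$ I claim $\nu_{k+1}\le\nu_k$. By Proposition~\ref{Prop-S3-03}(a) the sequence $(\HF_{\vartheta_\X}(i,k))_{i\ge0}$ is non-decreasing and bounded by $s$, and it equals its stabilized value $\HF_{\vartheta_\X}(i_{\max}+s,k)$ for $i\ge i_{\max}+s$; being non-decreasing, it is already constant for $i\ge\nu_k$. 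Thus multiplication by the non-zerodivisor $x_0$ is an isomorphism $(\vartheta_\X)_{(i,k)}\to(\vartheta_\X)_{(i+1,k)}$, that is $(\vartheta_\X)_{(i+1,k)}=x_0(\vartheta_\X)_{(i,k)}$, for every $i\ge\nu_k$. Feeding this into the shifting identity yields, for every $i\ge\nu_k$,
\[
(\vartheta_\X)_{(i+1,k+1)}=\textstyle\sum_l y_l(\vartheta_\X)_{(i+1,k)}=\textstyle\sum_l y_l x_0(\vartheta_\X)_{(i,k)}=x_0(\vartheta_\X)_{(i,k+1)},
\]
so $x_0$ already acts as an isomorphism on the $(k+1)$-slice for all $i\ge\nu_k$. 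Hence $(\HF_{\vartheta_\X}(i,k+1))_{i\ge0}$ is constant for $i\ge\nu_k$, which forces $\nu_{k+1}\le\nu_k$.

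Combining these observations, the numbers $\nu_k$ vanish for $k<j_{\min}$ and are non-increasing for $k\ge j_{\max}$, so the supremum $\sup_{k\in\N}\nu_k$ is attained within the range $j_{\min}\le k\le j_{\max}$; this is exactly $\nu_{\vartheta_\X}=\max\{\nu_{j_{\min}},\dots,\nu_{j_{\max}}\}$. The claim for $\varrho_{\vartheta_\X}$ follows from the symmetric argument, interchanging the two gradings, the non-zerodivisors $x_0$ and $y_0$, and the thresholds $i_{\max}$ and $j_{\max}$. I expect the main obstacle to be the step $\nu_{k+1}\le\nu_k$: the definition of $\nu_k$ records where a slice first reaches its eventual value, not where two consecutive values first coincide, and Lemma~\ref{Lem-S3-05} only rules out intermediate plateaus once $i\ge i_{\max}$. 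The device that avoids this difficulty is to derive the $x_0$-isomorphism simultaneously for all $i\ge\nu_k$ directly from the module-level shifting identity, so that the $(k+1)$-slice is seen to be constant on the entire range $i\ge\nu_k$ at once, with no appeal to the threshold $i_{\max}$.
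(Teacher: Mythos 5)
Your proposal is correct and takes essentially the same approach as the paper: your shifting identity $(\vartheta_\X)_{(i,j+1)}=\sum_{l} y_l(\vartheta_\X)_{(i,j)}$ for $j\ge j_{\max}$ is exactly the paper's observation that $(\vartheta_\X)_{(l,k+1)}$ contains no minimal generators and hence equals $(\vartheta_\X)_{(l,k)}\cdot (R_\X)_{0,1}$, and your descent step $\nu_{k+1}\le\nu_k$ via the $x_0$-isomorphism on slices, combined with $\nu_k=0$ for $k<j_{\min}$ and the symmetric argument with $y_0$ for $\varrho_{\vartheta_\X}$, mirrors the paper's induction precisely.
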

\begin{proof}
For $(i,j)\in\N^2$ with $i< i_{\min}$ or $j< j_{\min}$,
it is clearly true that $\HF_{\vartheta_\X}(i,j) =0$.
By the definition of $\nu_j$ and $\nu_{\vartheta_\X}$,
we have $\nu_j=0$ if $j< j_{\min}$ and
$\nu_{\vartheta_\X} \ge \nu_{k}$ for $k\ge 0$.
It suffices to show that $\nu_{j_{\max}} \ge \nu_{k}$
for all $k\ge j_{\max}$.

When $k=j_{\max}$ and $i\ge \nu_{j_{\max}}$, we have $\HF_{\vartheta_{\X}}(i,k)
=\HF_{\vartheta_{\X}}(i+1,k)$.
So, $x_0(\vartheta_{\X})_{i,k}
=(\vartheta_{\X})_{i+1,k}$, since $x_0$ is a non-zerodivisor of~$R_\X$.
Also, for any $l\ge 0$, $(\vartheta_{\X})_{l,k+1}$ contains no minimal generators,
and hence $(\vartheta_{\X})_{l,k+1}=(\vartheta_{\X})_{l,k}\cdot (R_\X)_{0,1}$.
This implies $(\vartheta_{\X})_{i+1,k+1}
= (\vartheta_{\X})_{i+1,k}\cdot (R_\X)_{0,1}
= x_0(\vartheta_{\X})_{i,k}\cdot (R_\X)_{0,1}
= x_0 (\vartheta_{\X})_{i,k+1}$.
Thus $\HF_{\vartheta_{\X}}(i,k+1)=\HF_{\vartheta_{\X}}(i+1,k+1)$
for any $i\ge \nu_{j_{\max}}$, and so $\nu_{k} \ge \nu_{k+1}$.
By induction on $k$, we get $\nu_{j_{\max}} \ge \nu_{k}$
for all $k\ge j_{\max}$, and this completes the proof of the equality
for $\nu_{\vartheta_\X}$.
The equality for $\varrho_{\vartheta_\X}$ can be achieved similarly
using the non-zerodivisor $y_0\in (R_\X)_{0,1}$.
\end{proof}

As a consequence of the lemma, when $\vartheta_{\X}$ is a principal ideal
then $\nu_{\vartheta_\X} = \nu_{j_{\min}} = \nu_{j_{\max}}$
and $\varrho_{\vartheta_\X} = \varrho_{i_{\min}} = \varrho_{i_{\max}}$.

\bigbreak
\section{Special ACM Sets}\label{Sec4}

In this section we look at finite sets of points in~$\pmpn$
having the complete intersection or Cayley-Bacharach properties.
As before, we let $\X=\{p_1,...,p_s\}$ be a set of $s$ distinct
points in $\pmpn$.

\begin{definition}
\begin{enumerate}
\item[(a)]
$\X$ is called a \textit{complete intersection}
if its bihomogeneous ideal $I_\X$ is generated by a bihomogeneous
regular sequence.
\item[(b)]
If $I_\X$ is generated by $\{F_1,...,F_m, G_1,...,G_n\}$
which forms a bihomogeneous regular sequence with $F_i\in S_{d_i,0}$
and $G_j\in S_{0,d'_j}$ for $1\le i\le m$ and $1\le j\le n$,
we say that $\X$ is a \textit{complete intersection of type
$(d_1,...,d_m,d'_1,...,d'_n)$} and write $CI(d_1,...,d_m,d'_1,...,d'_n)$.
\end{enumerate}
\end{definition}

It is worth noticing that every complete intersection $\X$ is ACM.
When $\X=\X_1\times\X_2$, where $\X_k=\pi_k(\X)$
for $k=1,2$ (see Convention~\ref{Conv-S2-03}),
we also have the following property.

\begin{lemma}\label{Lem-S4-02}
Let $I_{\X_1}$, $I_{\X_2}$ be the homogeneous vanishing
ideals of $\X_1$ and $\X_2$, respectively.
If $\X = \X_1\times \X_2$,  then $\X$ is ACM with
$I_\X = I_{\X_1}S + I_{\X_2}S$ and
$$
\HF_\X(i,j)=\HF_{\X_1}(i)\cdot\HF_{\X_2}(j)
$$
for all $(i,j)\in\Z^2$.
\end{lemma}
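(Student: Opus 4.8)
The plan is to route the argument through the ideal $J := I_{\X_1}S + I_{\X_2}S$, to identify its quotient with a tensor product, and to show by an evaluation argument that the Hilbert functions match, which forces $J=I_\X$. Writing $P=K[X_0,\dots,X_m]$ and $Q=K[Y_0,\dots,Y_n]$, so that $S=P\otimes_K Q$ as bigraded $K$-algebras, one has the standard isomorphism $S/J\cong R_{\X_1}\otimes_K R_{\X_2}$; in particular $\dim_K(S/J)_{i,j}=\HF_{\X_1}(i)\,\HF_{\X_2}(j)$ for all $(i,j)$. The easy inclusion $J\subseteq I_\X$ is immediate, since every form coming from $I_{\X_1}$ (resp.\ $I_{\X_2}$) depends only on the $X$- (resp.\ $Y$-) variables and hence vanishes along all of $\X=\X_1\times\X_2$. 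This already gives $\HF_{S/J}(i,j)\ge\HF_\X(i,j)$, and therefore $\HF_{\X_1}(i)\,\HF_{\X_2}(j)\ge\HF_\X(i,j)$.

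The heart of the matter is the reverse inequality, which I would establish by a separating (evaluation) argument. Fix coordinate representatives for $\X_1=\{q_1,\dots,q_{s_1}\}$ and $\X_2=\{q'_1,\dots,q'_{s_2}\}$, so that each point $q_i\times q'_j$ receives the product representative; then evaluation of a bihomogeneous form of degree $(i,j)$ is a well-defined $K$-linear map $S_{i,j}\to K^{s}$, $s=s_1s_2$, whose kernel is exactly $(I_\X)_{i,j}$. Put $h_1:=\HF_{\X_1}(i)$ and $h_2:=\HF_{\X_2}(j)$, and choose $f_1,\dots,f_{h_1}\in P_i$ whose residues form a $K$-basis of $(R_{\X_1})_i$ (equivalently, whose evaluation vectors $v_k=(f_k(q_1),\dots,f_k(q_{s_1}))$ are linearly independent in $K^{s_1}$), and likewise $g_1,\dots,g_{h_2}\in Q_j$ with evaluation vectors $w_l$ linearly independent in $K^{s_2}$. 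The key point, and the place where the product structure $\X=\X_1\times\X_2$ is used, is that $f_kg_l$ evaluates on $\X$ to the vector $v_k\otimes w_l\in K^{s}$. Since a tensor product of linearly independent families is again linearly independent, the forms $f_kg_l$ have linearly independent images in $(R_\X)_{i,j}$, whence $\HF_\X(i,j)\ge h_1h_2=\HF_{\X_1}(i)\,\HF_{\X_2}(j)$.

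Combining the two inequalities yields the Hilbert function identity $\HF_\X(i,j)=\HF_{\X_1}(i)\,\HF_{\X_2}(j)$. Since moreover $\HF_{S/J}(i,j)=\HF_{\X_1}(i)\,\HF_{\X_2}(j)=\HF_{S/I_\X}(i,j)$ while $J\subseteq I_\X$, comparing dimensions inside $S_{i,j}$ degree by degree forces $J_{i,j}=(I_\X)_{i,j}$ for every $(i,j)$, that is, $I_\X=I_{\X_1}S+I_{\X_2}S$. For the ACM assertion I would then work directly with the identification $R_\X\cong R_{\X_1}\otimes_K R_{\X_2}$: pick linear forms $\ell\in P_1$ and $\ell'\in Q_1$ not vanishing on $\X_1$ and $\X_2$ respectively, so that $\bar\ell$ is a non-zerodivisor on $R_{\X_1}$ and $\bar\ell'$ on $R_{\X_2}$. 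As $R_{\X_2}$ is $K$-flat, $\bar\ell\otimes 1$ is a non-zerodivisor on $R_\X$; and as $R_{\X_1}/(\bar\ell)$ is $K$-flat, $1\otimes\bar\ell'$ is a non-zerodivisor on $R_\X/(\bar\ell\otimes 1)\cong (R_{\X_1}/(\bar\ell))\otimes_K R_{\X_2}$. Thus $\{\bar\ell,\bar\ell'\}$ is a regular sequence in the irrelevant ideal, so $\mathrm{depth}(R_\X)\ge 2$; since $\dim R_\X=2$ this gives $\mathrm{depth}(R_\X)=2$, i.e.\ $\X$ is ACM.

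The main obstacle is the reverse Hilbert-function inequality, i.e.\ the injectivity step. One must be careful that projective evaluation is only well defined after fixing representatives, and that with this choice the evaluation vector of $f_kg_l$ genuinely factors as $v_k\otimes w_l$; this factorization is exactly what fails for a general $\X$ and holds precisely because $\X$ is the full product $\X_1\times\X_2$. Once the separation is set up correctly, the linear independence of tensors of independent vectors closes the gap, and the remaining claims — the ideal identity and the ACM property — follow formally.
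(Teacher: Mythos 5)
Your proof is correct, but it is genuinely different in character from the paper's: the paper's proof is essentially a citation argument, obtaining the ACM property and the equality $I_\X = I_{\X_1}S + I_{\X_2}S$ from \cite[Theorem~2.1]{BK2002} and \cite[Lemma~3.5]{GJ2019}, and then the Hilbert function formula from the isomorphism $R_\X \cong R_{\X_1}\otimes_K R_{\X_2}$ quoted from \cite[G.2]{Kun2005}. You instead prove everything from scratch. Your key step --- choosing representatives so that evaluation of $f_kg_l$ on $\X=\X_1\times\X_2$ factors as the tensor $v_k\otimes w_l$, and using linear independence of tensors of independent families to get $\HF_\X(i,j)\ge \HF_{\X_1}(i)\HF_{\X_2}(j)$ --- replaces the appeal to \cite{GJ2019} for the ideal identity, which you then recover by the degree-by-degree dimension comparison with $J=I_{\X_1}S+I_{\X_2}S$; note that this basis-and-evaluation technique is in the same spirit as the argument the paper itself uses later in the proof of Proposition~\ref{Prop-S4-12} with the terms $T_rT'_t$. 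Likewise, rather than invoking Bouchiba--Kabbaj's general theorem on Cohen--Macaulayness of tensor products, you exhibit an explicit regular sequence $\bar\ell\otimes 1,\ 1\otimes\bar\ell'$, using that over a field every module is flat, that $\bar\ell$ (resp.\ $\bar\ell'$) is a non-zerodivisor because $\X_1$ (resp.\ $\X_2$) is reduced and the form avoids all its points, and that $\operatorname{depth}\le\dim=2$; this is sound, since both forms lie in the irrelevant ideal. What each approach buys: yours is elementary and self-contained, exposing exactly where the hypothesis $\X=\X_1\times\X_2$ enters (the factorization of evaluation vectors, which fails for a general $\X$), while the paper's is much shorter and leans on results valid in greater generality (e.g.\ \cite{BK2002} treats tensor products of Cohen--Macaulay algebras broadly). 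No gaps to report.
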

\begin{proof}
The ACM proprety of $\X$ and the equality
$I_{\X_1}S + I_{\X_2}S = I_\X$ follow from
\cite[Theorem 2.1]{BK2002} and \cite[Lemma~3.5]{GJ2019}.
Moreover, we have $R_\X \cong R_{\X_1}\otimes_K R_{\X_2}$
by \cite[G.2]{Kun2005}, where $R_{\X_1}=K[x_0,...,x_m]/I_{\X_1}$
is the homogeneous coordinate ring of $\X_1\subseteq \bbP^m$
and $R_{\X_2}=K[y_0,...,y_n]/I_{\X_2}$
is the homogeneous coordinate ring of $\X_2\subseteq \bbP^n$.
Therefore we get the equality
$\HF_\X(i,j)=\HF_{\X_1}(i)\cdot\HF_{\X_2}(j)$
for all $(i,j)\in \Z^2$.
\end{proof}

As a direct consequence of the lemma, we
get the following shape of the border of the Hilbert function
of $\X$ for this case.

\begin{corollary}\label{Cor-S4-03}
In the setting of Lemma~\ref{Lem-S4-02}, let $s_k=|\X_k|$
and let $r_{\X_k}$ be the regularity index of $\HF_{\X_k}$
for $k=1,2$. The border $B_\X=(B_C,B_R)$
of the Hilbert function of $\X$ is given by
$$
B_C = (s_1,s_1\HF_{\X_2}(1),\dots, s_1\HF_{\X_2}(r_{\X_2})=s_1s_2)
$$
and
$$
B_R = (s_2,s_2\HF_{\X_1}(1),\dots, s_2\HF_{\X_1}(r_{\X_1})=s_1s_2).
$$
\end{corollary}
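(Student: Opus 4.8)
The plan is to unwind the definition of the border $B_\X=(B_C,B_R)$ and feed in the product formula from Lemma~\ref{Lem-S4-02}. Recall that the entries of $B_C$ are the values $\HF_\X(r_{\X_1},k)$ for $k=0,1,\dots,r_{\X_2}$, while the entries of $B_R$ are the values $\HF_\X(k,r_{\X_2})$ for $k=0,1,\dots,r_{\X_1}$. Since $\X=\X_1\times\X_2$, Lemma~\ref{Lem-S4-02} gives $\HF_\X(i,j)=\HF_{\X_1}(i)\cdot\HF_{\X_2}(j)$ for all $(i,j)\in\Z^2$, so each border entry factors as a product of a value of $\HF_{\X_1}$ and a value of $\HF_{\X_2}$, and the computation reduces to reading off these one-variable Hilbert functions.

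For the column border I would compute $\HF_\X(r_{\X_1},k)=\HF_{\X_1}(r_{\X_1})\cdot\HF_{\X_2}(k)$ for each $k$. The only input needed is that the Hilbert function of a finite set of points in a projective space attains its final value at the regularity index; applied to $\X_1\subseteq\bbP^m$ this gives $\HF_{\X_1}(r_{\X_1})=s_1$. Hence $\HF_\X(r_{\X_1},k)=s_1\HF_{\X_2}(k)$, which for $k=0$ reads $s_1\cdot 1=s_1$ (using $\HF_{\X_2}(0)=1$) and for $k=r_{\X_2}$ reads $s_1\HF_{\X_2}(r_{\X_2})=s_1s_2$; this is exactly the claimed shape of $B_C$. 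The row border is handled symmetrically: $\HF_\X(k,r_{\X_2})=\HF_{\X_1}(k)\cdot\HF_{\X_2}(r_{\X_2})=s_2\HF_{\X_1}(k)$, giving the endpoint values $s_2$ at $k=0$ and $s_1s_2$ at $k=r_{\X_1}$, which matches the stated $B_R$. As a consistency check, the last entry of $B_C$ and the last entry of $B_R$ both equal the shared corner value $\HF_\X(r_{\X_1},r_{\X_2})=s_1s_2$, as they must.

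There is no substantive obstacle here: the corollary is a direct bookkeeping consequence of the product formula in Lemma~\ref{Lem-S4-02}. The only points worth flagging are the standard normalizations $\HF_{\X_k}(0)=1$ together with the stabilization $\HF_{\X_k}(i)=s_k$ for $i\ge r_{\X_k}$, both of which follow from the monotonicity and eventual constancy of Hilbert functions of points (cf. Proposition~\ref{Prop-S2-04}) and from the very definition of the regularity index $r_{\X_k}$ of $\HF_{\X_k}$.
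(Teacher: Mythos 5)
Your proposal is correct and follows exactly the route the paper intends: the paper states Corollary~\ref{Cor-S4-03} as a direct consequence of Lemma~\ref{Lem-S4-02} with no written proof, and your argument simply substitutes the product formula $\HF_\X(i,j)=\HF_{\X_1}(i)\cdot\HF_{\X_2}(j)$ into the definition of the border, using $\HF_{\X_k}(0)=1$ and $\HF_{\X_k}(r_{\X_k})=s_k$. Nothing is missing; this is precisely the intended bookkeeping.
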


Notice that if $\X=\X_1\times\X_2$, then it is also ACM by
Lemma~\ref{Lem-S4-02}, so that the K\"{a}hler different of $\X$ exists.

\begin{proposition}\label{Prop-S4-05}
If $\X = \X_1\times\X_2$, then
the K\"{a}hler different $\vartheta_\X$ satisfies
$$
\vartheta_\X=\vartheta_{\X_1}R_\X \cdot\vartheta_{\X_2}R_\X.
$$
In addition, if $\X = CI(d_1,...,d_m,d'_1,...,d'_n)$,
then $\vartheta_\X$ is a bihomogeneous principal ideal
and has Hilbert function
$$
\HF_{\vartheta_\X}(r_{\X_1}+i, r_{\X_2}+j)=\HF_{\X}(i,j)
$$
for all $(i,j)\in\N^2$,
where $r_{\X_1}=\sum_{k=1}^md_k - m$ and
$r_{\X_2}=\sum_{l=1}^nd'_l - n$.
\end{proposition}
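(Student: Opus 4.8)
The plan is to prove the two assertions in sequence, using the product structure of $\X=\X_1\times\X_2$ throughout. First I would establish the identity $\vartheta_\X=\vartheta_{\X_1}R_\X\cdot\vartheta_{\X_2}R_\X$. Lemma~\ref{Lem-S3-02}(a) already gives the inclusion $\vartheta_{\X_1}R_\X\cdot\vartheta_{\X_2}R_\X\subseteq\vartheta_\X$, so only the reverse inclusion is needed. For this I would use Lemma~\ref{Lem-S4-02}, which tells us that $I_\X=I_{\X_1}S+I_{\X_2}S$; hence $I_\X$ admits a bihomogeneous generating system of the form $\{F_1,\dots,F_a,G_1,\dots,G_b\}$ where the $F_i$ generate $I_{\X_1}$ (involving only the $X$-variables) and the $G_j$ generate $I_{\X_2}$ (involving only the $Y$-variables). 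The crucial observation is that $\partial F_i/\partial y_j=0$ and $\partial G_j/\partial x_i=0$, so the Jacobian matrix $\mathcal J$ is block-diagonal with respect to the partition $\{x_1,\dots,x_m\}\cup\{y_1,\dots,y_n\}$. Consequently every nonzero $(m+n)$-minor factors as a product of an $m$-minor built from the $\partial F_i/\partial x_k$ and an $n$-minor built from the $\partial G_j/\partial y_l$; the former lie in $\vartheta_{\X_1}R_\X$ and the latter in $\vartheta_{\X_2}R_\X$. This forces $\vartheta_\X\subseteq\vartheta_{\X_1}R_\X\cdot\vartheta_{\X_2}R_\X$ and gives equality.

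For the second part, assume $\X=CI(d_1,\dots,d_m,d'_1,\dots,d'_n)$. Here the regular sequence generating $I_\X$ consists of exactly $m$ forms $F_i\in S_{d_i,0}$ and $n$ forms $G_j\in S_{0,d'_j}$, so the Jacobian matrix $\mathcal J$ is square of size $(m+n)\times(m+n)$ and has a single maximal minor, namely $\det\mathcal J$. By the block structure this determinant factors as $\det\mathcal J=\frac{\partial(F_1,\dots,F_m)}{\partial(x_1,\dots,x_m)}\cdot\frac{\partial(G_1,\dots,G_n)}{\partial(y_1,\dots,y_n)}$, so $\vartheta_\X$ is the bihomogeneous \emph{principal} ideal generated by this single element. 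Each $F_i$ restricts to a complete intersection generator of $I_{\X_1}$, so $\X_1=CI(d_1,\dots,d_m)$ in $\bbP^m$ and likewise $\X_2=CI(d'_1,\dots,d'_n)$; the classical formula for the regularity index of a complete intersection of points then yields $r_{\X_1}=\sum_k d_k-m$ and $r_{\X_2}=\sum_l d'_l-n$.

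It remains to compute the Hilbert function shift. Writing $d:=\deg(\det\mathcal J)$, one reads off from the degrees of the $F_i$ and $G_j$ that $d=(\sum_k(d_k-1),\sum_l(d'_l-1))=(r_{\X_1},r_{\X_2})$. Since $\vartheta_\X$ is principal with generator a non-zerodivisor of bidegree $(r_{\X_1},r_{\X_2})$ (non-zerodivisor by Lemma~\ref{Lem-S3-02}(b)), multiplication by this generator gives a bidegree-preserving injection $(R_\X)_{i,j}\to(\vartheta_\X)_{r_{\X_1}+i,\,r_{\X_2}+j}$ that is in fact bijective, whence $\HF_{\vartheta_\X}(r_{\X_1}+i,r_{\X_2}+j)=\HF_\X(i,j)$ for all $(i,j)\in\N^2$. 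The main obstacle I anticipate is the first inclusion: one must argue carefully that the block-diagonal shape of $\mathcal J$ persists after passing to \emph{any} bihomogeneous generating system (not just the separated one coming from $I_{\X_1}S+I_{\X_2}S$), or else fix the separated generating system once and invoke the fact that the Kähler different is independent of the chosen presentation. Verifying this independence, together with the Cauchy--Binet-style factorization of the minors, is where the real work lies; the degree bookkeeping and the principal-ideal Hilbert-function computation are then routine.
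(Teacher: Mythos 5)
Your proof is correct, and its first half coincides with the paper's argument: by Lemma~\ref{Lem-S4-02} one takes the separated generating system $\{F_1,\dots,F_r,G_1,\dots,G_t\}$ with the $F_i$ generating $I_{\X_1}$ and the $G_j$ generating $I_{\X_2}$, observes that the Jacobian matrix is block-diagonal, and concludes that every nonvanishing $(m+n)$-minor factors as an $m$-minor from the $x$-block times an $n$-minor from the $y$-block, giving $\vartheta_\X=\vartheta_{\X_1}R_\X\cdot\vartheta_{\X_2}R_\X$. The presentation-independence you flag as the ``real work'' is handled exactly as you propose: $\vartheta_\X$ is the initial Fitting ideal of $\Omega^1_{R_\X/R_o}$ (as the paper notes in the introduction), so it does not depend on the chosen generating system, and the paper simply works with the convenient separated one. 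In the complete intersection part you diverge modestly from the paper: there, the authors cite \cite[Corollary~2.6]{KLL2015} to get that $\vartheta_{\X_1}$ and $\vartheta_{\X_2}$ are principal, generated by homogeneous non-zerodivisors of degrees $r_{\X_1}$ and $r_{\X_2}$, and multiply these together; you instead observe that the Jacobian of the regular sequence $\{F_1,\dots,F_m,G_1,\dots,G_n\}$ is square, so $\vartheta_\X=\langle\det\mathcal{J}\rangle$ with $\det\mathcal{J}$ of bidegree $\bigl(\sum_k(d_k-1),\sum_l(d'_l-1)\bigr)=(r_{\X_1},r_{\X_2})$, and you extract the non-zerodivisor property from Lemma~\ref{Lem-S3-02}(b) --- a valid one-line deduction, since if the principal ideal $\langle h\rangle$ contains a non-zerodivisor $fh$, then $h$ itself cannot annihilate a nonzero element. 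Your route is slightly more self-contained (the only external input is the standard regularity formula $r_{\X_1}=\sum_k d_k-m$ for a complete intersection of points, which \cite{KLL2015} would also supply), whereas the paper's citation packages the principality, degree, and non-zerodivisor bookkeeping in one step; both arguments then conclude identically, the Hilbert-function shift following from multiplication by a non-zerodivisor generator of bidegree $(r_{\X_1},r_{\X_2})$ being a bijection $(R_\X)_{i,j}\to(\vartheta_\X)_{r_{\X_1}+i,\,r_{\X_2}+j}$.
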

\begin{proof}
Suppose that $\{ F_1,...,F_r\}$ is a homogeneous system of generators of
$I_{\X_1}$ and $\{ G_1,...,G_t\}$ is a homogeneous system of generators
of~$I_{\X_2}$.
Then Lemma~\ref{Lem-S4-02} yields that
the relation matrix of $\Omega^1_{R/R_o}$ with respect to
$\{dx_1,...,dx_m,dy_1,...,dy_n\}$ is
$$
\begin{pmatrix}
\frac{\partial F_{1}}{\partial x_1}&\cdots&
\frac{\partial F_{1}}{\partial x_m}& 0&\cdots & 0\\
\vdots&\ddots& \vdots& \vdots&\ddots & \vdots\\
\frac{\partial F_{r}}{\partial x_1}&\cdots&
\frac{\partial F_{r}}{\partial x_m}& 0&\cdots & 0\\
0&\cdots& 0& \frac{\partial G_1}{\partial y_1}&\cdots &
\frac{\partial G_1}{\partial y_n}\\
\vdots&\ddots& \vdots& \vdots&\ddots & \vdots\\
0&\cdots& 0& \frac{\partial G_{t}}{\partial y_1}&\cdots &
\frac{\partial G_{t}}{\partial y_n}
\end{pmatrix}.
$$
Because
$\frac{\partial (F_{i_1},...,F_{i_k},G_{i_{k+1}},...,G_{i_{n+m}})}
{\partial (x_1,...,x_m,y_1,...,y_n)} = 0$
if $k\ne m$, it follows that
$\vartheta_\X$ is generated by elements of the form
$\frac{\partial (F_{i_1},...,F_{i_m},G_{j_1},...,G_{j_n})}
{\partial (x_1,...,x_m,y_1,...,y_n)}$
where $\{i_1,...,i_m\}\subseteq\{1,...,r\}$ and $\{j_1,...,j_n\}\subseteq\{1,...,t\}.$
But this element can be written as
$$
\frac{\partial (F_{i_1},...,F_{i_m},G_{j_1},...,G_{j_n})}
{\partial (x_1,...,x_m,y_1,...,y_n)}
=\frac{\partial (F_{i_1},...,F_{i_m})}{\partial (x_1,...,x_m)}\cdot
\frac{\partial (G_{j_1},...,G_{j_n})}{\partial (y_1,...,y_n)}.
$$
Hence we get $\vartheta_\X=\vartheta_{\X_1}R_\X\cdot\vartheta_{\X_2}R_\X$.
If $\X= CI(d_1,...,d_m,d'_1,...,d'_n) =\X_1\times\X_2$,
then $\X_1$ and $\X_2$ are complete intersections.
By \cite[Corollary 2.6]{KLL2015}, $\vartheta_{\X_1}$
is a principal ideal generated by a homogeneous
non-zerodivisor of degree $r_{\X_1}$ and
$\vartheta_{\X_2}$ is a principal ideal generated by a homogeneous
non-zerodivisor of degree $r_{\X_2}$, and hence
$\vartheta_{\X}$ is a principal ideal generated by a homogeneous
non-zerodivisor of degree $(r_{\X_1},r_{\X_2})$.
This also implies the claimed formula for $\HF_{\vartheta_\X}$.
\end{proof}

\begin{corollary}\label{Cor-S4-05}
If $\X = CI(d_1,...,d_m,d'_1,...,d'_n)$ and $B_\X=(B_C,B_R)$,
then we have $(\nu_{\vartheta_\X},\varrho_{\vartheta_\X})=(2r_{\X_1},2r_{\X_2})$
and the border of the Hilbert function $\vartheta_\X$ is given by
$$
B_{\vartheta_{\X}} = ((\underbrace{0,...,0}_{r_{\X_2}},
B_C),(\underbrace{0,...,0}_{r_{\X_1}},B_R)).
$$
\end{corollary}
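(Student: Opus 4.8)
The plan is to read off everything directly from Proposition~\ref{Prop-S4-05}. Since $\X = CI(d_1,\dots,d_m,d'_1,\dots,d'_n)$, that proposition tells us $\vartheta_\X = hR_\X$ is a principal ideal generated by a bihomogeneous non-zerodivisor $h$ of degree $(r_{\X_1},r_{\X_2})$, and that $\HF_{\vartheta_\X}(r_{\X_1}+i,r_{\X_2}+j)=\HF_\X(i,j)$ for all $(i,j)\in\N^2$. First I would record the complementary vanishing statement: because every nonzero element of $\vartheta_\X$ has the form $hg$ with $g\in R_\X$, we get $\HF_{\vartheta_\X}(i,j)=0$ whenever $i<r_{\X_1}$ or $j<r_{\X_2}$. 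Combined with the shift formula, this already determines $\HF_{\vartheta_\X}$ completely in terms of $\HF_\X$.

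Next I would pin down $(\nu_{\vartheta_\X},\varrho_{\vartheta_\X})$. As $\vartheta_\X$ is principal, its unique minimal generator has degree $(r_{\X_1},r_{\X_2})$, so $i_{\min}=i_{\max}=r_{\X_1}$ and $j_{\min}=j_{\max}=r_{\X_2}$; by the remark following the last lemma of Section~\ref{Sec3} we then have $\nu_{\vartheta_\X}=\nu_{r_{\X_2}}$ and $\varrho_{\vartheta_\X}=\varrho_{r_{\X_1}}$. To evaluate $\nu_{r_{\X_2}}$ I apply the shift formula in the row $j=r_{\X_2}$: for $i\ge r_{\X_1}$ one obtains $\HF_{\vartheta_\X}(i,r_{\X_2})=\HF_\X(i-r_{\X_1},0)=\HF_{\X_1}(i-r_{\X_1})$, using $\X=\X_1\times\X_2$ and Lemma~\ref{Lem-S4-02}. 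Since $r_{\X_1}$ is the regularity index of $\HF_{\X_1}$, this row first attains its stable value $s_1$ exactly when $i-r_{\X_1}=r_{\X_1}$, i.e.\ at $i=2r_{\X_1}$; hence $\nu_{r_{\X_2}}=2r_{\X_1}$. The symmetric computation in the column $i=r_{\X_1}$ gives $\varrho_{r_{\X_1}}=2r_{\X_2}$, so $(\nu_{\vartheta_\X},\varrho_{\vartheta_\X})=(2r_{\X_1},2r_{\X_2})$ as claimed.

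Finally I would compute the border directly. The column part $B_{C,\vartheta_\X}$ lists $\HF_{\vartheta_\X}(2r_{\X_1},j)$ for $j=0,\dots,2r_{\X_2}$: the vanishing statement kills the first $r_{\X_2}$ entries (those with $j<r_{\X_2}$), while for $j=r_{\X_2}+j'$ with $0\le j'\le r_{\X_2}$ the shift formula gives $\HF_{\vartheta_\X}(2r_{\X_1},r_{\X_2}+j')=\HF_\X(r_{\X_1},j')$, which are precisely the entries of $B_C$. Thus $B_{C,\vartheta_\X}=(\underbrace{0,\dots,0}_{r_{\X_2}},B_C)$, and the analogous bookkeeping in the row $j=2r_{\X_2}$ yields $B_{R,\vartheta_\X}=(\underbrace{0,\dots,0}_{r_{\X_1}},B_R)$. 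I do not expect a genuine obstacle here: the substantive input (principality together with the shift formula) is supplied by Proposition~\ref{Prop-S4-05}, and the only point requiring care is the index accounting—correctly matching the shifts and counting the exact number of leading zeros in each tuple.
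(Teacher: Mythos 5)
Your proof is correct and follows exactly the route the paper intends: the paper states Corollary~\ref{Cor-S4-05} without proof as an immediate consequence of Proposition~\ref{Prop-S4-05}, and your argument simply makes that explicit (vanishing of $(\vartheta_\X)_{i,j}$ below the generator's degree $(r_{\X_1},r_{\X_2})$, the principal-ideal remark after the last lemma of Section~3 to reduce to $\nu_{r_{\X_2}}$ and $\varrho_{r_{\X_1}}$, and the shift formula for the index bookkeeping). Nothing is missing, and the computation $(\nu_{\vartheta_\X},\varrho_{\vartheta_\X})=(2r_{\X_1},2r_{\X_2})$ with the stated leading zeros checks out.
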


Recall that for a finite set $\X$ of points in $\bbP^m$
and $p\in\X$, a \textit{minimal separator} of $p$ is a homogeneous
element $F\in K[X_0,...,X_m]$ of minimal degree such that
$F(p)\ne 0$ and $F(p') = 0$ for all $p'\in \X\setminus\{p\}$.
The \textit{degree} $\deg_\X(p)$ of $p$ in $\X$ is
the degree of a minimal separator of $p$.
We have $\deg_\X(p)\le r_\X$ for every point $p\in\X$, where
$r_\X$ is the regularity index of $\HF_\X$
(see \cite[Lemma~2.4]{GKR1993}).
We say that $\X$ is a \textit{Cayley-Bacharach scheme} if
all points of $\X$ have the same degree $r_\X$.
For many interesting results and more information
about these notions in the standard case,
see \cite{GKR1993, KLR2019}.

Now we look at the generalization of these notions for
a (not necessary ACM) set $\X$ of $s$ distinct points
in~$\pmpn$. In the same manner as above,
for each $p\in \X$, a bihomogeneous form
$F\in S$ is a \textit{separator} of $p$ in $\X$ if $F(p)\ne 0$
and $F(p')=0$ for all $p'\in\X \setminus\{p\}$, and a separator
$F\in S$ of $p$ in $\X$ is \textit{minimal} if there does not exist
a separator $G$ of $p$ with $\deg(G) \prec \deg(F)$.
For the existence of a finite set of minimal separators
of any point in $\X$ and their properties,
see e.g. \cite{GuVT2008, GuVT2008b, Mar2009}.

\begin{definition}
The \textit{degree} of a point $p\in \X$ is the set
$$
\deg_{\X}(p) = \{\deg(F )\mid F \
\textrm{is a minimal separator of}\ p\}.
$$
\end{definition}

For any $(i,j)\in\N^2$, we define $D_{(i,j)}:=\{(k,l)\in\N^2\mid (i,j)\preceq (k,l)\}$
and for a finite set $\Sigma=\{(i_1,j_1),...,(i_t,j_t)\}\subseteq\N^2$ we put
$D_{\Sigma}:=\bigcup_{k=1}^t D_{(i_k,j_k)}.$
Clearly, for every $(i,j)\in D_{\deg_\X(p)}$,
there exists a separator $F$ of $p$ with $\deg(F)=(i,j)$.
In the following we collect several useful properties
of degrees of points in $\X$ (see \cite[Theorem 5.7]{GuVT2008}
and \cite[Theorem 2.2]{GuVT2008b}).

\begin{theorem}\label{Thm-S4-07}
Let $p \in\X$ and $\Y=\X\setminus\{p\}$.
\begin{enumerate}
\item[(a)]
If $\{F_1,...,F_t\}$ is a set of minimal separators of $p$,
then $I_\Y = I_\X + \langle F_1,...,F_t\rangle.$
\item[(b)]
We have
\[
\HF_{\Y}(i,j)=\begin{cases}
	\HF_{\X}(i,j)&\ \textrm{if}\ (i,j)\notin D_{\deg_\X(p)}\\
	\HF_{\X}(i,j)-1&\ \textrm{if}\ (i,j)\in D_{\deg_\X(p)}.
\end{cases}
\]
\item[(c)] If $\X$ is ACM, then $|\deg_{\X}(p)|=1$
for every $p\in\X$.
\end{enumerate}
\end{theorem}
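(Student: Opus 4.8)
The plan is to study the single bigraded module $N := I_\Y/I_\X$ attached to the short exact sequence of bigraded $R_\X$-modules
$$0 \longrightarrow N \longrightarrow R_\X \longrightarrow R_\Y \longrightarrow 0.$$
Since $\X = \Y\cup\{p\}$ we have $I_\X = I_\Y\cap I_p$, so reducing modulo $I_p$ gives an isomorphism $N \cong (I_\Y+I_p)/I_p$, which realizes $N$ as a bihomogeneous ideal of the coordinate ring $S/I_p$ of the single point~$p$. After a coordinate change placing $p=[1:0:\cdots:0]\times[1:0:\cdots:0]$, we get $S/I_p = K[X_0,Y_0]=:K[u,v]$, a bigraded polynomial ring each of whose bihomogeneous components is one-dimensional; consequently $N$ is a monomial ideal with $\dim_K N_{i,j}\le 1$ for all $(i,j)$. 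Its support is an up-set $\Delta\subseteq\N^2$, closed upward because multiplying a separator by a power of $X_0$ or $Y_0$ (which does not vanish at $p$) again yields a separator. As $\N^2$ is well-founded, $\Delta$ is the union of the $D_{(a,b)}$ over its minimal elements, and those minimal elements are exactly the degrees of the minimal separators of $p$; that is, $\Delta = D_{\deg_\X(p)}$ and $\dim_K N_{i,j}=1$ precisely on $D_{\deg_\X(p)}$.

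Granting this set-up, parts (a) and (b) are essentially formal. For (a) I would first note $I_\X+\langle F_1,\dots,F_t\rangle\subseteq I_\Y$, since $I_\X\subseteq I_\Y$ and each minimal separator vanishes on $\Y$. For the reverse inclusion I would take a bihomogeneous $G\in I_\Y$ of degree $(i,j)$: if $G\in I_\X$ there is nothing to prove, and otherwise $G(p)\ne 0$ forces $(i,j)\in\Delta$, so some $F_k$ of degree $(i_k,j_k)\preceq(i,j)$ exists; comparing images in $K[u,v]$, a scalar multiple of $X_0^{i-i_k}Y_0^{j-j_k}F_k$ agrees with $G$ modulo $I_p$, whence the difference lies in $I_\Y\cap I_p=I_\X$ and $G\in I_\X+\langle F_k\rangle$. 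Part (b) is then immediate from the exact sequence: $\HF_\Y(i,j)=\HF_\X(i,j)-\dim_K N_{i,j}$, and the last term equals $1$ on $D_{\deg_\X(p)}$ and $0$ elsewhere.

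The main work, and the expected obstacle, is part (c): I must upgrade the combinatorial description of $N$ to the statement that, when $\X$ is ACM, the up-set $\Delta$ has a single minimal element, equivalently that $N$ is a principal ideal of $K[u,v]$. The leverage is the ACM hypothesis, which makes $R_\X$ a maximal Cohen--Macaulay, hence free, module over its Noether normalization $R_o=K[x_0,y_0]$. Identifying $R_o$ with $K[u,v]$ through the isomorphism above (so that $N$ is literally an $R_o$-ideal), I would apply the depth lemma to the short exact sequence over $R_o$: since $\operatorname{depth}_{R_o} R_\X = 2$ and $\operatorname{depth}_{R_o} R_\Y\ge 1$, we obtain $\operatorname{depth}_{R_o} N \ge \min\{2,\,1+1\} = 2$. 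Thus $N$ is maximal Cohen--Macaulay over $R_o$, therefore free; being a nonzero ideal of the domain $R_o$ it has rank one, hence is free of rank one and so principal. Since the minimal $R_o$-generators of $N$ sit exactly at the minimal elements of $\Delta$, principality gives $|\deg_\X(p)|=1$.

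The delicate points I would still need to pin down are the identification $\operatorname{depth}_{R_o} R_\Y = \operatorname{depth} R_\Y$ (depth is insensitive to restricting to the finite subring $R_o$, as $\fm_{R_o}R_\X$ is primary to the irrelevant maximal ideal, and $\operatorname{depth} R_\Y\ge 1$ because every coordinate ring of points has positive depth), together with the precise dictionary between minimal $R_o$-generators of $N$ and minimal separator degrees, which follows from graded Nakayama and the one-dimensionality of the components of $K[u,v]$. Each of these is routine, but the freeness-via-depth step is where the ACM hypothesis is genuinely used and is the heart of the argument.
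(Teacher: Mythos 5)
The paper does not actually prove Theorem~\ref{Thm-S4-07}: it imports the statement from Guardo--Van Tuyl (\cite[Theorem 5.7]{GuVT2008} and \cite[Theorem 2.2]{GuVT2008b}), so there is no internal proof to compare against, and your proposal must be judged on its own. As far as I can check it is correct and self-contained, and it is close in spirit to the separator-module technique of the cited papers, where the key object is likewise the quotient $I_\Y/I_\X$. Your route --- realizing $N=I_\Y/I_\X\cong(I_\Y+I_p)/I_p$ as a bihomogeneous ideal of $S/I_p\cong K[u,v]$, noting that each bigraded component of $K[u,v]$ is one-dimensional so that $N$ is a monomial ideal determined by the up-set $\Delta$ of its support, and reading off (a) and (b) from this structure --- is clean, and it correctly isolates the fact that (a) and (b) require no ACM hypothesis. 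For (c), the chain ``depth lemma gives $\operatorname{depth}_{R_o}N\ge\min\{\operatorname{depth}R_\X,\,\operatorname{depth}R_\Y+1\}=2$, hence $N$ is maximal Cohen--Macaulay over the regular ring $R_o\cong K[u,v]$, hence free, hence (being a nonzero ideal of a domain) free of rank one, hence principal, hence $\Delta$ has a unique minimal element'' is exactly the right use of the Cohen--Macaulay hypothesis; this buys a short conceptual proof where the literature's treatment is spread over two papers. Three small points you should still make explicit, none of which affects correctness: (i) the coordinate change must simultaneously place $p$ at $[1:0:\cdots:0]\times[1:0:\cdots:0]$ and keep $x_0,y_0$ a regular sequence in $R_\X$; this works because any regular pair $(\ell,\ell')$ satisfies $\ell(\pi_1(p))\ne 0$ and $\ell'(\pi_2(p))\ne 0$, so one may complete $\ell$ (resp.\ $\ell'$) to a basis of linear forms whose remaining members vanish at $\pi_1(p)$ (resp.\ $\pi_2(p)$); (ii) the degenerate case $\Y=\emptyset$, where $R_\Y=0$ and $N=R_\X$ is already free, should be set aside before invoking $\operatorname{depth}R_\Y\ge 1$; (iii) in (a) the family $\{F_1,\dots,F_t\}$ must contain a minimal separator of every degree in $\deg_\X(p)$ --- which is how the sources understand ``a set of minimal separators'' --- since your reduction finds some $F_k$ with $\deg(F_k)\preceq (i,j)$ only under that reading.
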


The converse of Theorem~\ref{Thm-S4-07}(c) holds true for $n=m=1$
by \cite[Theorem~6.7]{Mar2009}, but it fails to hold in general
(see \cite[Example 5.10]{GuVT2008} for an example
in $\bbP^2\times\bbP^2$). When $\X$ is ACM, we write
$\deg_\X(p)=(i,j)$ instead of $\deg_\X(p)=\{(i,j)\}.$

\begin{definition}
The set $\X$ is said to have the \textit{Cayley-Bacharach property}
if the Hilbert function of $\X\setminus \{p\}$ is independent
of the choice of $p\in \X$, or equivalently, if all of its
points have the same degree.
\end{definition}

In \cite[Proposition~7.3]{GKLL2017},
we know that $\X = CI(d_1,d'_1)$ if and only if
$\X$ has the Cayley-Bacharach property. However, it fails to hold in
general as the following example shows.

\begin{example}\label{Exam-S4-13}
	In $\bbP^1\times \bbP^2$, consider the set $\X=\X_1\times\X_2$
	of six points, where $\X_1 = \{q_1,q_2\} \subseteq \bbP^1$
	with $q_1=(1:0)$, $q_2=(1:1)$, and
	$\X_2=\{q'_1,q'_2,q'_3\} \subseteq \bbP^2$ with
	$q'_1=(1:0:0)$, $q'_2=(1:1:0)$, $q'_3=(1:1:1)$.
	Then $I_\X$ has a bihomogeneous minimal system of generators given by
	$$
	\{\, x_{0}x_{1} - x_{1}^{2}, y_{0}y_{1} - y_{1}^{2},
	y_{1}y_{2} - y_{2}^{2}, y_{0}y_{2} - y_{2}^{2} \,\},
	$$
	so $\X$ is not a complete intersection.
	On the other hand, $\X_1\subseteq \bbP^1$ is a complete intersection with $r_{\X_1}=1$,
	and hence $\X_1$ is a Cayley-Bacharach scheme,
	and $\X_2=\{q'_1,q'_2,q'_3\} \subseteq \bbP^2$ is also
	a Cayley-Bacharach scheme with $r_{\X_2}=1$.
	Using ApCoCoA we can check that $\deg(q_i\times q'_j)=(1,1)$
	for all $i=1,2$ and $j=1,2,3$.
	Thus $\X = \X_1\times\X_2$ has the Cayley-Bacharach property
	(this also follows by Proposition~\ref{Prop-S4-12}).
	In this case the K\"ahler different has its Hilbert function
	$$
	\HF_{\vartheta_\X} =
	\begin{bmatrix}
		0 & 0 & 0 & 0 & \cdots \\
		0 & 0 & 3 & 3 & \cdots \\
		0 & 0 & 6 & 6 & \cdots \\
		0 & 0 & 6 & 6 & \cdots \\
		\vdots & \vdots & \vdots & \vdots & \ddots
	\end{bmatrix}
	$$
	and $\HF_{\vartheta_\X}(r_{\X_1},r_{\X_2})
	=\HF_{\vartheta_\X}(1,1) =0$.
\end{example}

Using the K\"ahler different, we give a characterization of
complete intersections of type $(d_1,...,d_m,d'_1,...,d'_n)$
as follows.

\begin{theorem}\label{Thm-S4-10}
For a set $\X$ of $s$ distinct points in $\pmpn$,
the following statements are equivalent.
\begin{enumerate}
	\item[(a)] $\X = CI(d_1,...,d_m,d'_1,...,d'_n)$ for some
	positive integers $d_i,d'_j\ge 1$.
	\item[(b)] $\X = \X_1\times \X_2$ and $\X_1\subseteq \bbP^m$
	is a complete intersection of type $(d_1,...,d_m)$
	and $\X_2 \subseteq \bbP^n$ is a complete intersection of type $(d'_1,...,d'_n)$.
	\item[(c)] $\X = \X_1\times \X_2$ has the Cayley-Bacharach
	property and $\HF_{\vartheta_{\X}}(r_{\X_1},r_{\X_2})\ne 0$.
\end{enumerate}
\end{theorem}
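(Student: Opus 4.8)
The plan is to prove the full equivalence by establishing (a)$\Leftrightarrow$(b), which is purely structural, and then (b)$\Leftrightarrow$(c), where the Kähler different does the real work. For (a)$\Rightarrow$(b), write $I_\X=\langle F_1,\dots,F_m,G_1,\dots,G_n\rangle$ with $\deg F_i=(d_i,0)$ and $\deg G_j=(0,d'_j)$. Since the $F_i$ involve only $X_0,\dots,X_m$ and form an initial segment of a regular sequence, they are a regular sequence in $S$; as they lie in $A=K[X_0,\dots,X_m]$ and $S$ is faithfully flat over $A$, they are a regular sequence in $A$, cutting out a complete intersection $\X_1'\subseteq\bbP^m$ of type $(d_1,\dots,d_m)$, and symmetrically the $G_j$ define $\X_2'\subseteq\bbP^n$ of type $(d'_1,\dots,d'_n)$. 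Since $I_\X=I_{\X_1'}S+I_{\X_2'}S$, comparing zero sets gives $\X=\X_1'\times\X_2'$; as $\X$ is reduced and $\operatorname{char}K=0$, both factors are reduced, so $\X_1'=\pi_1(\X)=\X_1$ and $\X_2'=\pi_2(\X)=\X_2$. For (b)$\Rightarrow$(a), Lemma~\ref{Lem-S4-02} gives $I_\X=I_{\X_1}S+I_{\X_2}S=\langle F_1,\dots,F_m,G_1,\dots,G_n\rangle$ with $F_\bullet$ a regular sequence in $K[X_0,\dots,X_m]$ and $G_\bullet$ a regular sequence in $K[Y_0,\dots,Y_n]$; passing to the polynomial extension $S$, the concatenation is again a regular sequence, so $\X=CI(d_1,\dots,d_m,d'_1,\dots,d'_n)$.

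For (b)$\Rightarrow$(c), note that $\X_1,\X_2$ are complete intersections, hence arithmetically Gorenstein, hence Cayley-Bacharach schemes, so $\deg_{\X_1}(q)=r_{\X_1}$ and $\deg_{\X_2}(q')=r_{\X_2}$ for all points. The key auxiliary ingredient is a product degree formula, valid whenever $\X=\X_1\times\X_2$: one has $\deg_\X(q\times q')=(\deg_{\X_1}(q),\deg_{\X_2}(q'))$. I would prove it by checking that the product $FG$ of minimal separators $F$ of $q$ and $G$ of $q'$ is a separator of $q\times q'$ (giving $\preceq$), and conversely that specializing a minimal separator $H$ of $q\times q'$ at $Y=q'$ (resp.\ $X=q$) produces a separator of $q$ in $\X_1$ (resp.\ of $q'$ in $\X_2$), yielding the reverse inequalities; Theorem~\ref{Thm-S4-07}(c) guarantees $\deg_\X(q\times q')$ is a single tuple. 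The formula makes $\deg_\X$ constantly $(r_{\X_1},r_{\X_2})$, so $\X$ is Cayley-Bacharach; and by (b)$\Rightarrow$(a) together with Proposition~\ref{Prop-S4-05}, $\vartheta_\X$ is principal with $\HF_{\vartheta_\X}(r_{\X_1},r_{\X_2})=\HF_\X(0,0)=1\ne0$.

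For (c)$\Rightarrow$(b), Proposition~\ref{Prop-S4-05} gives $\vartheta_\X=\vartheta_{\X_1}R_\X\cdot\vartheta_{\X_2}R_\X$, and under the isomorphism $R_\X\cong R_{\X_1}\otimes_K R_{\X_2}$ of Lemma~\ref{Lem-S4-02} this product is exactly the image of $\vartheta_{\X_1}\otimes_K\vartheta_{\X_2}$; exactness of $-\otimes_K-$ over the field $K$ then yields $\HF_{\vartheta_\X}(i,j)=\HF_{\vartheta_{\X_1}}(i)\,\HF_{\vartheta_{\X_2}}(j)$, so the hypothesis $\HF_{\vartheta_\X}(r_{\X_1},r_{\X_2})\ne0$ forces $\HF_{\vartheta_{\X_1}}(r_{\X_1})\ne0$ and $\HF_{\vartheta_{\X_2}}(r_{\X_2})\ne0$. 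Reading the product degree formula in reverse, the Cayley-Bacharach property of $\X$ makes $\deg_{\X_1}(q)$ (resp.\ $\deg_{\X_2}(q')$) independent of the point, so both $\X_1$ and $\X_2$ are Cayley-Bacharach schemes. Finally I would invoke the one-factor characterization of complete intersections via the Kähler different from \cite{KLL2015}: for $i\in\{1,2\}$, a Cayley-Bacharach set of points $\X_i$ with $\HF_{\vartheta_{\X_i}}(r_{\X_i})\ne0$ is a complete intersection. This gives (b) and closes the equivalences.

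The step I expect to be the main obstacle is (c)$\Rightarrow$(b): proving the multiplicativity $\HF_{\vartheta_\X}=\HF_{\vartheta_{\X_1}}\cdot\HF_{\vartheta_{\X_2}}$ cleanly from the tensor decomposition of Proposition~\ref{Prop-S4-05}, and establishing the product degree formula precisely enough to \emph{descend} the Cayley-Bacharach property to the two projections. Once these are secured, the conclusion reduces to the single-projective-space characterization cited above, which carries the substantive content of the reverse direction.
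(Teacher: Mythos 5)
Your proposal is correct and follows essentially the same route as the paper: (a)$\Leftrightarrow$(b) via the structural decomposition $I_\X=I_{\X_1}S+I_{\X_2}S$ of Lemma~\ref{Lem-S4-02}, (b)$\Rightarrow$(c) via the product degree formula $\deg_\X(q\times q')=(\deg_{\X_1}(q),\deg_{\X_2}(q'))$ (the content of Proposition~\ref{Prop-S4-12}, which you re-derive inline) together with Proposition~\ref{Prop-S4-05}, and (c)$\Rightarrow$(b) by extracting $\HF_{\vartheta_{\X_k}}(r_{\X_k})\ne 0$ from $\vartheta_\X=\vartheta_{\X_1}R_\X\cdot\vartheta_{\X_2}R_\X$ and invoking the single-projective-space characterization of complete intersections. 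The only deviations are cosmetic: you prove full multiplicativity $\HF_{\vartheta_\X}(i,j)=\HF_{\vartheta_{\X_1}}(i)\,\HF_{\vartheta_{\X_2}}(j)$ where the paper only needs one factor, and the final characterization you want is \cite[Theorem~5.6]{KL2017} rather than \cite{KLL2015}.
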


In the proof of this theorem, we use the following properties.

\begin{lemma}\label{Lem-S4-11}
	For an ACM set of $s$ points $\X\subseteq \pmpn$,
	if $q\times q'\in\X$, then
	$$
	\deg_{\X}(q\times q')\preceq
	(\deg_{\X_1}(q),\deg_{\X_2}(q'))\preceq
	(r_{\X_1}, r_{\X_2}).
	$$
\end{lemma}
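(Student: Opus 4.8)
The plan is to prove the two inequalities separately, with essentially all of the content lying in the first one. The second inequality is immediate: applying \cite[Lemma~2.4]{GKR1993} to the sets $\X_1\subseteq\bbP^m$ and $\X_2\subseteq\bbP^n$ gives $\deg_{\X_1}(q)\le r_{\X_1}$ and $\deg_{\X_2}(q')\le r_{\X_2}$, whence $(\deg_{\X_1}(q),\deg_{\X_2}(q'))\preceq(r_{\X_1},r_{\X_2})$. Since $\X$ is ACM, Theorem~\ref{Thm-S4-07}(c) guarantees that $\deg_\X(q\times q')$ is a single pair $(i,j)$, so the first inequality amounts to exhibiting one separator of $q\times q'$ in $\X$ whose degree is exactly $(\deg_{\X_1}(q),\deg_{\X_2}(q'))$.

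The crux is to build this separator as a product of component separators. Set $a=\deg_{\X_1}(q)$ and $b=\deg_{\X_2}(q')$, and choose a minimal separator $f\in K[X_0,\dots,X_m]_a$ of $q$ in $\X_1$ together with a minimal separator $g\in K[Y_0,\dots,Y_n]_b$ of $q'$ in $\X_2$; these exist by the definition of degree in the standard graded setting. I would then consider the bihomogeneous form $F=fg\in S_{a,b}$ and check that it separates $q\times q'$ from the rest of $\X$. On the one hand $F(q\times q')=f(q)\,g(q')\ne 0$. On the other hand, any point $\tilde q\times\tilde q'\in\X$ distinct from $q\times q'$ satisfies $\tilde q\ne q$ or $\tilde q'\ne q'$; since $\tilde q=\pi_1(\tilde q\times\tilde q')\in\X_1$ and $\tilde q'=\pi_2(\tilde q\times\tilde q')\in\X_2$, the vanishing of $f$ on $\X_1\setminus\{q\}$ (in the first case) or of $g$ on $\X_2\setminus\{q'\}$ (in the second) forces $F(\tilde q\times\tilde q')=0$. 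Hence $F$ is a separator of $q\times q'$ of degree $(a,b)$.

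Finally I would conclude as follows. Because $F$ is a separator, there is a minimal separator of $q\times q'$ whose degree is $\preceq\deg(F)=(a,b)$; as $\X$ is ACM this minimal degree is the unique pair $(i,j)=\deg_\X(q\times q')$, so $\deg_\X(q\times q')\preceq(\deg_{\X_1}(q),\deg_{\X_2}(q'))$, which together with the second inequality proves the lemma. The only step demanding any care is the case analysis showing $F$ vanishes on $\X\setminus\{q\times q'\}$, which rests on the elementary observation that the alternatives $\tilde q\ne q$ and $\tilde q'\ne q'$ exhaust all remaining points; I do not expect a deeper obstacle, since both the product construction and the passage from an arbitrary separator to the unique minimal degree are routine once the ACM hypothesis is invoked.
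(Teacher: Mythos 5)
Your proposal is correct and takes essentially the same route as the paper: the paper's proof also invokes the ACM hypothesis to get a unique degree via Theorem~\ref{Thm-S4-07}(c) and then observes that the product $F_kG_l$ of separators of $q$ in $\X_1$ and of $q'$ in $\X_2$ is a separator of $q\times q'$ in $\X$. The only difference is one of detail: the paper leaves the vanishing check and the bound $\deg_{\X_k}\le r_{\X_k}$ (from \cite[Lemma~2.4]{GKR1993}) implicit, whereas you spell out the case analysis on $\tilde q\ne q$ versus $\tilde q'\ne q'$, which is exactly the right justification.
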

\begin{proof}
Since $\X$ is ACM, and so each point of $\X$ has exactly
one degree. The claim follows from the fact that
if $F_k$ is a separator of $q$ in~$\X_1$ and
$G_l$ is a separator of $q'$ in~$\X_2$,
then $F_kG_l$ is also a separator of $q\times q'$ in~$\X$.
\end{proof}

\begin{proposition}\label{Prop-S4-12}
Suppose $\X = \X_1\times \X_2 \subseteq \pmpn$.
Then $\X$ has the Cayley-Bacharach property
if and only if $\X_1$ and $\X_2$ are Cayley-Bacharach schemes.
\end{proposition}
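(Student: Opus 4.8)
The plan is to deduce the statement from an explicit computation of the degree of each point of $\X$ in terms of the degrees of its two projections. Since $\X=\X_1\times\X_2$ is ACM by Lemma~\ref{Lem-S4-02}, Theorem~\ref{Thm-S4-07}(c) ensures that every point of $\X$ (and of $\X_1$, of $\X_2$) has a single well-defined degree, so the Cayley--Bacharach property of $\X$ is equivalent to the assertion that $\deg_\X(q_i\times q'_j)$ is independent of the point $q_i\times q'_j$. Writing $\X_1=\{q_1,\dots,q_{s_1}\}$ and $\X_2=\{q'_1,\dots,q'_{s_2}\}$, the core of the argument is therefore the formula
$$
\deg_{\X}(q_i\times q'_j)=\bigl(\deg_{\X_1}(q_i),\,\deg_{\X_2}(q'_j)\bigr)
\qquad\text{for every } q_i\times q'_j\in\X.
$$

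The inequality $\deg_\X(q_i\times q'_j)\preceq(\deg_{\X_1}(q_i),\deg_{\X_2}(q'_j))$ is exactly Lemma~\ref{Lem-S4-11}, obtained by multiplying a minimal separator of $q_i$ in $\X_1$ by one of $q'_j$ in $\X_2$. For the reverse inequality I would exploit the isomorphism $R_\X\cong R_{\X_1}\otimes_K R_{\X_2}$ coming from $I_\X=I_{\X_1}S+I_{\X_2}S$ (Lemma~\ref{Lem-S4-02}). Fix non-zerodivisor linear forms $\ell\in(R_{\X_1})_1$ and $\ell'\in(R_{\X_2})_1$ not vanishing on $\X_1$, $\X_2$, and let $\mathrm{ev}_a\colon(R_{\X_1})_k\to K$, $f\mapsto f(q_a)/\ell(q_a)^k$ and $\mathrm{ev}'_c\colon(R_{\X_2})_l\to K$, $g\mapsto g(q'_c)/\ell'(q'_c)^l$ be the normalized evaluations, which are well defined and detect vanishing. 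A separator $F$ of $q_i\times q'_j$ of bidegree $(k,l)$ lies in $(R_{\X_1})_k\otimes_K(R_{\X_2})_l$, and I would apply the functional $\mathrm{ev}_i\otimes\mathrm{id}$ to obtain $G:=(\mathrm{ev}_i\otimes\mathrm{id})(F)\in(R_{\X_2})_l$. Since $(\mathrm{ev}_i\otimes\mathrm{ev}'_c)(F)$ computes the normalized value of $F$ at $q_i\times q'_c$, the separator property of $F$ forces $\mathrm{ev}'_c(G)\ne 0$ precisely when $c=j$; thus $G$ is a separator of $q'_j$ in $\X_2$ of degree $l$, whence $l\ge\deg_{\X_2}(q'_j)$. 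Contracting the second factor by $\mathrm{ev}'_j$ gives $k\ge\deg_{\X_1}(q_i)$ in the same way, and the reverse inequality follows.

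Granting the degree formula, the proposition is immediate: the pair $(\deg_{\X_1}(q_i),\deg_{\X_2}(q'_j))$ is constant over all $i,j$ if and only if $\deg_{\X_1}(q_i)$ is constant in $i$ and $\deg_{\X_2}(q'_j)$ is constant in $j$. For a reduced set of points in projective space the largest point-degree equals the regularity index (cf. \cite[Lemma~2.4]{GKR1993} and the discussion there), so ``all points of $\X_k$ share a common degree'' is equivalent to ``that common degree is $r_{\X_k}$'', i.e. to $\X_k$ being a Cayley--Bacharach scheme. Applying this to $k=1,2$ yields the stated equivalence.

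The step I expect to be the main obstacle is the reverse inequality in the degree formula, namely ruling out a separator of $q_i\times q'_j$ whose bidegree is strictly smaller in one coordinate than the corresponding projected degree. What makes this work is that the normalized evaluation functionals on $R_{\X_1}$ and $R_{\X_2}$ factor the evaluation on $R_\X\cong R_{\X_1}\otimes_K R_{\X_2}$, so contracting one tensor factor turns a separator of the product point into a genuine separator on the remaining factor; the non-vanishing $\mathrm{ev}'_j(G)\ne 0$ (equivalently $F(q_i\times q'_j)\ne 0$) guarantees $G\ne 0$, so the restriction is not vacuous.
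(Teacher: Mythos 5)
Your proposal is correct and follows essentially the same route as the paper: the paper's proof of Proposition~\ref{Prop-S4-12} also rests on the degree formula $\deg_{\X}(q_i\times q'_j)=(\deg_{\X_1}(q_i),\deg_{\X_2}(q'_j))$, with Lemma~\ref{Lem-S4-11} giving $\preceq$ and the reverse inequality obtained by partially evaluating a separator $F$ in one set of variables (the paper's $F_k=\sum_{r,t}c_{rt}T'_t(q'_l)T_r$ built from term bases modulo $I_\X=I_{\X_1}S+I_{\X_2}S$ is exactly your contraction $(\mathrm{id}\otimes\mathrm{ev}'_l)(F)$ in explicit coordinates). Your closing step, invoking that some point of $\X_k$ attains degree $r_{\X_k}$, is the same fact (\cite[Proposition~1.14]{GKR1993}) the paper uses, merely applied directly rather than via the paper's short contradiction argument.
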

\begin{proof}
Note that $\X$ is ACM.
Let us write $\X_1 =\{q_1,...,q_{s_1}\}\subseteq\bbP^m$
and $\X_2=\{q'_1,...,q'_{s_2}\}\subseteq\bbP^m.$
Firstly, we prove that
$$
\deg_{\X}(q_k\times q'_l)=(\deg_{\X_1}(q_k), \deg_{\X_2}(q'_l))
$$
for all $1\le k\le s_1, 1\le l\le s_2$.
According to Lemma~\ref{Lem-S4-11}, it suffices to show
that $\deg_{\X}(q_k\times q'_l) \succeq (\deg_{\X_1}(q_k), \deg_{\X_2}(q'_l))$.
Suppose $\deg_{\X}(q_k\times q'_l)=(i,j)$.
Let $F\in S_{i,j}$ be a minimal separator of the point
$q_k\times q'_l$. Then $F = \sum_u G_uH_u$ with $G_u\in S_{i,0}$
and $H_u\in S_{0,j}$. Let $T_1,...,T_{m_i}\in S_{i,0}$
(resp. $T'_1,...,T'_{n_j}\in S_{0,j}$)
be terms whose residue classes form a $K$-basis of
$S_{i,0}/(I_{\X_1}S)_{i,0}$ (resp. $S_{0,j}/(I_{\X_2}S)_{0,j}$).
This enables us to write
$G_u=a_{u1}T_1+\cdots+a_{um_i}T_{m_i}+ G'_u$
with $G'_u\in (I_{\X_1}S)_{i,0}$ and $a_{ur}\in K$,
$H_u=b_{u1}T'_1+\cdots+b_{un_j}T'_{n_j}+ H'_u$
with $H'_u\in (I_{\X_2}S)_{0,j}$ and $b_{ut}\in K.$
Since $I_\X = I_{\X_1}S+ I_{\X_2}S$, we have
$$
F = \sum_u G_uH_u =\!\!\!
\sum_{ 1\le r\le m_i, 1\le t\le n_j}c_{rt}T_{r}T'_{t}
\quad (\mathrm{mod}\ I_{\X}), \
\textrm{with} \ c_{rt}=\sum_u a_{ur}b_{ut}.
$$
Put $F_k := \sum_{rt} c_{rt}T'_t(q'_l)T_r \in S_{i,0}$.
Since $F(q_k\times q'_l)\ne 0$, we have $F_k(q_k)\ne 0$.
Moreover, $F_k(q_{k'})= F(q_{k'}\times q'_l)=0$ for $k'\ne k$.
So, $F_k$ is a separator of $q_k$ in~$\X_1$,
and this yields $i\ge \deg_{\X_1}(q_k)$.
Analogously, the element
$G_l := \sum_{rt} c_{rt}T_r(q_k)T'_t\in S_{0,j}$
is a separator of $q'_l$ in~$\X_2$, and hence
$j\ge  \deg_{\X_2}(q'_l).$
Thus, $(i,j) \succeq (\deg_{\X_1}(q_k),\deg_{\X_2}(q'_l))$,
and therefore we get
$\deg_{\X}(q_k\times q'_l)=(\deg_{\X_1}(q_k), \deg_{\X_2}(q'_l))$
for all $k,l$.

If $\X_1$ and $\X_2$ are Cayley-Bacharach schemes, then
$$
\deg_{\X}(q_k\times q'_l)=(\deg_{\X_1}(q_k), \deg_{\X_2}(q'_l))
= (r_{\X_1}, r_{\X_2})
$$
for all $1\le k\le s_1$ and $1\le l\le s_2$,
and hence $\X$ has the Cayley-Bacharach property.
Conversely, suppose that $\X$ has the Cayley-Bacharach property,
but $\X_1$ is not a Cayley-Bacharach-scheme.
Then there is a point $q_{k}\in\X_1$ such that $\deg_{\X_1}(q_{k})<r_{\X_1}$.
By \cite[Proposition 1.14]{GKR1993}, we find $q_{k'}\in\X_1$
such that $\deg_{\X_1}(q_{k'}) = r_{\X_1}$
and $q'_l\in\X_2$ such that $\deg_{\X_2}(q'_l)=r_{\X_2}$.
This implies
$$
\deg_{\X}(q_{k}\times q'_l)\preceq (r_{\X_1}-1, r_{\X_2})\prec
(r_{\X_1}, r_{\X_2})=\deg_{\X}(q_{k'}\times q_l),
$$
and thus $\X$ does not have the Cayley-Bacharach property,
a contradiction.
Therefore, $\X_1$ is a CB-scheme, so is $\X_2.$
\end{proof}

\begin{proof}[Proof of Theorem~\ref{Thm-S4-10}]
	The implication ``(b)$\Rightarrow$(a)'' follows from
	Lemma~\ref{Lem-S4-02}. To prove ``(a)$\Rightarrow$(b)'',
	suppose that $\X = CI(d_1,...,d_m,d'_1,...,d'_n)$ for some
	positive integers $d_i,d'_j\ge 1$.
	Then $I_\X = \langle F_1,...,F_m,G_1,...,G_n\rangle_S$
	with $\deg(F_i)=(d_i,0)$ and $\deg(G_j)=(0,d'_j)$,
	particularly, $I_{\X_1}=\langle F_1,...,F_m\rangle$
	is a saturated homogeneous ideal of~$K[X_0,...,X_m]$
	defining a complete intersection $\X_1 \subseteq \bbP^m$
	and $I_{\X_2} = \langle G_1,...,G_n\rangle$
	is a saturated homogeneous ideal of~$K[Y_0,...,Y_n]$ defining
	a complete intersection $\X_2 \subseteq \bbP^n$.
	Moreover, it is not hard to verify that $\X= \X_1\times \X_2$.

	The implication ``(b)$\Rightarrow$(c)'' holds true by
	Proposition~\ref{Prop-S4-05} and Proposition~\ref{Prop-S4-12}
	and the fact that a complete intersection set of $s$ points
	in $\bbP^m$ is always a Cayley-Bacharach scheme.

    Now we prove ``(c)$\Rightarrow$(b)''.
    It suffice to show that $\X_1$ is a complete intersection
in $\bbP^m$ (similarly for $\X_2\subseteq \bbP^n$).
By assumption, $\X$ has the Cayley-Bacharach property,
then $\X_1$ and $\X_2$ are Cayley-Bacharach schemes
by Proposition~\ref{Prop-S4-12}.
According to Proposition~\ref{Prop-S4-05}, we have
$\vartheta_\X=\vartheta_{\X_1}R_\X \cdot\vartheta_{\X_2}R_\X$,
and so  $\HF_{\vartheta_\X}(r_{\X_1},r_{\X_2})\ne 0$
implies $\HF_{\vartheta_{\X_1}}(r_{\X_1})\ne 0$.
By \cite[Theorem~5.6]{KL2017}, $\X_1$ is a complete intersection,
as desired.
\end{proof}

\begin{lemma}
If $\X=\X_1\times\X_2$ and for every point $p\in \X$
the K\"ahler different $\vartheta_{\X}$
contains no separator of $p$ of degree $\prec (mr_{\X_1}, nr_{\X_2})$,
then $\X$ has the Cayley-Bacharach property.
\end{lemma}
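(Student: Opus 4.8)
\emph{The plan is to argue by contraposition.} Suppose $\X=\X_1\times\X_2$ does not have the Cayley--Bacharach property; I want to exhibit a point $p\in\X$ and a separator of $p$ lying in $\vartheta_\X$ of degree $\prec(mr_{\X_1},nr_{\X_2})$. By Proposition~\ref{Prop-S4-12} the failure of the Cayley--Bacharach property forces at least one projection, say $\X_1$, not to be a Cayley--Bacharach scheme, so I may fix $q\in\X_1$ with $\deg_{\X_1}(q)<r_{\X_1}$ together with an arbitrary $q'\in\X_2$. The target is then a separator of $q\times q'$ in $\vartheta_\X$ whose bidegree is controlled by $\deg_{\X_1}(q)$ and $\deg_{\X_2}(q')$.

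\emph{The heart of the matter is a construction inside a single projective space:} for a point $q\in\X_1\subseteq\bbP^m$ I claim $\vartheta_{\X_1}$ contains a separator of $q$ of degree $m\cdot\deg_{\X_1}(q)$. To build it, let $F_0$ be a minimal separator of $q$ in $\X_1$, so $\deg(F_0)=\deg_{\X_1}(q)$, $F_0(q)\neq0$, and $F_0$ vanishes on $\X_1\setminus\{q\}$. Since $x_0$ is a non-zerodivisor of $R_\X$ it does not vanish at $q$, so I may normalize $q=[1:q_1:\cdots:q_m]$ and put $\ell_i:=X_i-q_iX_0$ for $i=1,\dots,m$, the linear forms cutting out $q$. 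Each product $G_i:=F_0\ell_i$ lies in $I_{\X_1}$ (it is killed by $F_0$ off $q$ and by $\ell_i$ at $q$), and the Jacobian determinant $M:=\det\bigl(\partial G_i/\partial x_j\bigr)_{1\le i,j\le m}$ lies in $\vartheta_{\X_1}$ by the chain-rule computation used in the proof of Lemma~\ref{Lem-S3-02}.

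\emph{Next I would verify that $M$ is the desired separator and assemble the product.} Expanding $\partial(F_0\ell_i)/\partial x_j=\ell_i\,\partial F_0/\partial x_j+F_0\,\partial\ell_i/\partial x_j$ and using multilinearity of the determinant shows $F_0^{\,m-1}\mid M$, so $M$ vanishes on $\X_1\setminus\{q\}$; evaluating at $q$ kills every $\ell_i$-term and leaves $M(q)=F_0(q)^m\det(\partial\ell_i/\partial x_j)=F_0(q)^m\neq0$ because $\partial\ell_i/\partial x_j=\delta_{ij}$. Thus $M$ is a separator of $q$ in $\X_1$, and a degree count gives $\deg(M)=\sum_{i=1}^m(\deg G_i-1)=m\,\deg_{\X_1}(q)$. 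Applying this to $q$ yields $M_1\in\vartheta_{\X_1}$ separating $q$ with $\deg(M_1)=m\deg_{\X_1}(q)<mr_{\X_1}$, and the symmetric construction in $\bbP^n$ applied to $q'$ yields $M_2\in\vartheta_{\X_2}$ separating $q'$ with $\deg(M_2)=n\deg_{\X_2}(q')\le nr_{\X_2}$. By Proposition~\ref{Prop-S4-05} we have $\vartheta_\X=\vartheta_{\X_1}R_\X\cdot\vartheta_{\X_2}R_\X$, so $M_1M_2\in\vartheta_\X$; since $M_1$ separates $q$ in $\X_1$ and $M_2$ separates $q'$ in $\X_2$, the product $M_1M_2$ separates $q\times q'$ in $\X=\X_1\times\X_2$. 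Its bidegree $(m\deg_{\X_1}(q),\,n\deg_{\X_2}(q'))\prec(mr_{\X_1},nr_{\X_2})$, the strict drop in the first coordinate coming from $\deg_{\X_1}(q)<r_{\X_1}$, contradicting the hypothesis.

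\emph{The hard part will be the single-space construction and its degree bookkeeping.} The two delicate points are that $M$ genuinely lies in $\vartheta_{\X_1}$ (a Jacobian minor of arbitrary elements of $I_{\X_1}$, justified as in Lemma~\ref{Lem-S3-02}) and that the divisibility $F_0^{\,m-1}\mid M$ really forces vanishing on $\X_1\setminus\{q\}$, which needs the ambient space to be $\bbP^m$ with $m\ge2$. For the factor $\X_1$ this is automatic, since a projection failing the Cayley--Bacharach property cannot lie in $\bbP^1$; for the auxiliary factor $\X_2$ one must either invoke $n\ge2$ or treat a one-dimensional factor separately, and checking that this case does not spoil the degree comparison is where I expect the real work to lie. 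Note also that the factor $m$ (resp.\ $n$) in the bound $(mr_{\X_1},nr_{\X_2})$ is precisely the contribution of the $m$ (resp.\ $n$) columns of the Jacobian, so the stated degree threshold is exactly the one this construction is tailored to produce.
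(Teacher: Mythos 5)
Your proposal follows the paper's route step for step: contraposition through Proposition~\ref{Prop-S4-12}, a separator of $q$ inside $\vartheta_{\X_1}$ and one of $q'$ inside $\vartheta_{\X_2}$, the product placed in $\vartheta_\X$ via Proposition~\ref{Prop-S4-05} (the paper uses Lemma~\ref{Lem-S3-02}(a) in the same role), and the same degree comparison with a strict drop in the coordinate of the non-Cayley-Bacharach factor. The one substantive difference is the key containment: where the paper simply cites \cite[Corollary~2.6]{KLL2015} to put the power $F_i^m$ of a minimal separator into $\vartheta_{\X_1}$, you prove it by hand. Your computation is correct: $\det\bigl(\partial(F_0\ell_i)/\partial x_j\bigr)_{i,j}=F_0^{m-1}\bigl(F_0+\sum_{i=1}^m\ell_i\,\partial F_0/\partial x_i\bigr)$, which is a separator of $q$ of degree $m\deg_{\X_1}(q)$ lying in $\vartheta_{\X_1}$ (the Fitting ideal is insensitive to enlarging the generating system, exactly as in Lemma~\ref{Lem-S3-02}), and modulo $I_{\X_1}$ it even equals $F_0^m$ once $m\ge 2$. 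So your construction is a self-contained substitute for the citation and makes transparent where the factors $m$ and $n$ in the threshold come from.

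Your flagged worry about a factor in $\bbP^1$ is well founded, and in fact that case cannot be "treated separately" within the stated threshold. For $\X_2\subseteq\bbP^1$ with $s_2\ge 2$ one has $I_{\X_2}=\langle G\rangle$ with $G$ squarefree and $\vartheta_{\X_2}=\langle\overline{\partial G/\partial y_1}\rangle$, where $\partial G/\partial y_1$ vanishes at no point of $\X_2$; hence $\bar u\cdot\overline{\partial G/\partial y_1}\in\vartheta_{\X_2}$ separates $q'$ only if $u$ does, so the least degree of a separator of $q'$ inside $\vartheta_{\X_2}$ is $2r_{\X_2}$, strictly larger than $nr_{\X_2}=r_{\X_2}$. (This is precisely why the threshold in $\bbP^1\times\bbP^1$ in \cite[Proposition~7.3]{GKLL2017} is $(2r_{\X_1},2r_{\X_2})$.) Indeed the statement itself is problematic for $n=1$: take $\X_1\subseteq\bbP^2$ to be three collinear points plus one point off the line (not a Cayley-Bacharach scheme, $r_{\X_1}=2$) and $\X_2\subseteq\bbP^1$ two points; by Proposition~\ref{Prop-S4-05} every element of $(\vartheta_\X)_{(a,b)}$ with $b\le 1$ has the form $\bar v\cdot\overline{\partial G/\partial y_1}$ with $v$ of degree $(a,0)$, which is nonzero at both points of $\{q\}\times\X_2$ whenever $v(q)\ne 0$ and so is never a separator; thus the hypothesis holds vacuously below $(mr_{\X_1},nr_{\X_2})=(4,1)$ while $\X$ fails the Cayley-Bacharach property. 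So the lemma should be read with $m,n\ge 2$ (or a trivial factor $s_k=1$, where $\vartheta_{\X_k}=\langle 1\rangle$); the paper's own proof, which invokes the corollary wholesale for all $m,n$, glosses over exactly the boundary case you isolated, so your hesitation identified a genuine defect of the statement rather than a gap peculiar to your argument.
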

\begin{proof}
	Suppose that $\X$ does not have the Cayley-Bacharach property.
	By Proposition~\ref{Prop-S4-12}, $\X_1$ or $\X_2$ is not a Cayley-Bacharach
	scheme. Assume that $\X_1$ is not a Cayley-Bacharach scheme.
	There is $i\in\{1,...,s_1\}$ such that $\deg_{\X_1}(q_i) \le r_{\X_1}-1$.
	Let $F_i\in K[X_0,...,X_m]$ be a minimal separator of $q_i$ in $\X_1$
	and $F'_1\in K[Y_0,...,Y_n]$ be a minimal separator of $q'_1$ in $\X_2$.
	By \cite[Corollary~2.6]{KLL2015}, the image of $F_i^m$
	in $R_{\X_1}$ belongs to $\vartheta_{\X_1}$ and the image of
	${F'}_1^n$ in $R_{\X_2}$ belongs to $\vartheta_{\X_2}$.
	So, the image of $F_i^m{F'}_1^n$ in $R_\X$ is contained in~$\vartheta_{\X}$.
	Moreover, $F_i^m{F'}_1^n$ is a separator of $q_i\times q'_1$ in $\X$
	of degree $\preceq (m(r_{\X_1}-1), nr_{\X_2})$.
	This contradicts to the assumption.
\end{proof}

\bigbreak
\section{Finite Sets with the $(\star)$-Property}\label{Sec5}

Now we investigate the Cayley-Bacharach property for
a finite set $\X$ of points in $\pmpn$ which
satisfies the $(\star)$-property.
According to \cite[Definition~4.2]{GuVT2008},
the set $\X$ is said to have the \textit{$(\star)$-property}
if for any $q_1\times q'_1$, $q_2\times q'_2 \in \X$
then also either $q_1\times q'_2$ or $q_2\times q'_1\in \X$.
By \cite[Theorem~3.7]{GJ2019}, if $\X$ has the $(\star)$-property,
then $\X$ is ACM. Except for the case $m=n=1$, the converse of
this result does not hold true in general
(see \cite[Theorem 4.3 and Example 4.9]{GuVT2008}
and \cite[Example~4.2]{GJ2019}).
As before, for an ACM set $\X$
we always assume that $x_0,y_0$ form a regular sequence in~$R_\X$.

Write $\X_1 =\pi_1(\X) =\{q_1,...,q_{s_1}\}\subseteq\bbP^m$
and $\X_2=\pi_2(\X)=\{q'_1,...,q'_{s_2}\}\subseteq\bbP^m.$
For $i=1,...,s_1$ and $j=1,...,s_2$, put
$$
W_i := \pi_2(\pi_1^{-1}(q_i)\cap \X) \subseteq \X_2,
\quad
V_j := \pi_1(\pi_2^{-1}(q'_j)\cap \X) \subseteq \X_1.
$$
After renaming, we can always assume that
$|W_{s_1}|\le \cdots \le |W_1| \le s_2$ and
$|V_{s_2}|\le \cdots \le |V_1| \le s_1$.
When $\X$ has the $(\star)$-property,
we may assume $\X_1 = V_1 \supseteq\cdots\supseteq V_{s_2}$
and $\X_2 = W_1 \supseteq\cdots\supseteq W_{s_1}$
(see e.g. \cite[Lemma~3.4]{GJ2019}).

\begin{proposition}\label{Prop-S5-01}
If $\X$ has the $(\star)$-property, then
for $q_i\times q'_j \in \X$ we have
$$
\deg_\X(q_i\times q'_j) = (\deg_{V_j}(q_i), \deg_{W_i}(q'_j)).
$$
\end{proposition}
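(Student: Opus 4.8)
The plan is to prove the two inequalities $\deg_\X(q_i\times q'_j)\preceq(\deg_{V_j}(q_i),\deg_{W_i}(q'_j))$ and $\deg_\X(q_i\times q'_j)\succeq(\deg_{V_j}(q_i),\deg_{W_i}(q'_j))$ separately and then combine them. First I would record the elementary observations that make the statement meaningful: since $q_i\times q'_j\in\X$ we have $q_i\in V_j$ and $q'_j\in W_i$, so both degrees on the right-hand side are defined as ordinary degrees of points in $\bbP^m$ and $\bbP^n$; and since the $(\star)$-property forces $\X$ to be ACM (as recalled at the start of this section, via \cite[Theorem~3.7]{GJ2019}), Theorem~\ref{Thm-S4-07}(c) tells us that $\deg_\X(q_i\times q'_j)$ is a single pair, which I will write as $(a,b)$.

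For the inequality ``$\preceq$'', which is where the $(\star)$-property does the essential work, I would take a minimal separator $F\in K[X_0,\dots,X_m]$ of $q_i$ in $V_j$ and a minimal separator $G\in K[Y_0,\dots,Y_n]$ of $q'_j$ in $W_i$, and show that $FG$ is a separator of $q_i\times q'_j$ in $\X$. Nonvanishing at $q_i\times q'_j$ is immediate from $F(q_i)G(q'_j)\ne 0$. To check vanishing at any other point $q_{i'}\times q'_{j'}\in\X$, I split into cases: if $i'=i$ then $q'_{j'}\in W_i\setminus\{q'_j\}$, so $G(q'_{j'})=0$; if $j'=j$ then $q_{i'}\in V_j\setminus\{q_i\}$, so $F(q_{i'})=0$; and if $i'\ne i$ and $j'\ne j$, the $(\star)$-property applied to $q_i\times q'_j$ and $q_{i'}\times q'_{j'}$ forces $q_i\times q'_{j'}\in\X$ or $q_{i'}\times q'_j\in\X$, whence $q'_{j'}\in W_i$ or $q_{i'}\in V_j$ and again $G$ or $F$ kills the point. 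This last case is the main obstacle, and it is exactly where $(\star)$ is indispensable: without it the off-diagonal points of $\X$ need not be annihilated by any product form, and the argument breaks down (cf. the failure of the analogue in Example~\ref{Exam-S4-13}). Since $FG$ has bidegree $(\deg_{V_j}(q_i),\deg_{W_i}(q'_j))$, the existence of this separator yields the inequality.

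For the reverse inequality ``$\succeq$'', which needs no $(\star)$ hypothesis, I would take a minimal separator $F\in S_{a,b}$ of $q_i\times q'_j$ in $\X$ and pass to partial evaluations. Substituting the coordinates of $q'_j$ for the $Y$-variables produces a form $\tilde F\in K[X_0,\dots,X_m]_a$; because every $q_{i'}\in V_j\setminus\{q_i\}$ satisfies $q_{i'}\times q'_j\in\X$, the form $\tilde F$ vanishes at all such $q_{i'}$, while $\tilde F(q_i)=F(q_i\times q'_j)\ne 0$, so $\tilde F$ is a (nonzero) separator of $q_i$ in $V_j$ of degree $a$ and hence $\deg_{V_j}(q_i)\le a$. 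Symmetrically, substituting the coordinates of $q_i$ for the $X$-variables yields a separator of $q'_j$ in $W_i$ of degree $b$, giving $\deg_{W_i}(q'_j)\le b$. Combining the two inequalities gives the claimed equality $\deg_\X(q_i\times q'_j)=(\deg_{V_j}(q_i),\deg_{W_i}(q'_j))$. (One could instead mimic the basis-expansion argument in the proof of Proposition~\ref{Prop-S4-12}, writing $F\equiv\sum_{r,t}c_{rt}T_rT'_t$ modulo $I_\X$ and specializing, but the partial-evaluation formulation is cleaner here since we no longer have the decomposition $I_\X=I_{\X_1}S+I_{\X_2}S$ available.)
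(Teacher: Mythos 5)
Your proof is correct, and its two halves compare differently with the paper's. The forward inequality $\deg_\X(q_i\times q'_j)\preceq(\deg_{V_j}(q_i),\deg_{W_i}(q'_j))$ is essentially the paper's argument verbatim: the same product separator built from minimal separators in $V_j$ and $W_i$, the same three-way case split, with $(\star)$ invoked at exactly the same point (the case $i'\ne i$, $j'\ne j$). Where you genuinely diverge is the reverse inequality. The paper argues by contradiction: assuming, say, $r<\deg_{V_j}(q_i)$, it restricts a minimal separator of $q_i\times q'_j$ to the product subset $\Y=V_j\times\{q'_j\}\subseteq\X$, computes $\deg_\Y(q_i\times q'_j)=(\deg_{V_j}(q_i),0)$ via the basis-expansion argument from the proof of Proposition~\ref{Prop-S4-12} (which is legitimate for $\Y$ because $I_\Y = I_{V_j}S + I_{\{q'_j\}}S$ by Lemma~\ref{Lem-S4-02}), and then uses that every separator of a point in an ACM set has degree $\succeq$ the degree of that point (Theorem~\ref{Thm-S4-07}(b)) to force $\deg_{V_j}(q_i)\le r$, a contradiction. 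You instead dehomogenize directly: substituting a fixed representative of $q'_j$ into the $Y$-variables of a minimal separator $F\in S_{a,b}$ produces a degree-$a$ separator of $q_i$ in $V_j$ (vanishing and non-vanishing are independent of the representative, since changing it only rescales values by $\lambda^b$), and symmetrically in the other factor; the definition of $V_j$ guarantees $q_{i'}\times q'_j\in\X$ for all $q_{i'}\in V_j$, which is all the evaluation argument needs. Your route is more elementary and more direct — it avoids the auxiliary scheme $\Y$, Proposition~\ref{Prop-S4-12}, and the degree-set machinery of Theorem~\ref{Thm-S4-07}(b), and it yields both coordinate bounds at once rather than by a componentwise contradiction — while the paper's route buys economy of means, reusing the product-set degree computation it has already established. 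Your closing parenthetical is also accurate: the basis expansion cannot be applied to $\X$ itself for lack of the decomposition $I_\X=I_{\X_1}S+I_{\X_2}S$, which is precisely why the paper first passes to the product subset $\Y$.
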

\begin{proof}
Since $\X$ is ACM, we have $\deg_\X(q_i\times q'_j)=(r,t)$
for some $(r,t)\in \N^2$. Clearly, $q_i\in V_j$ and
$q'_j \in W_i$. Let $G \in (K[X_0,...,X_m])_{\deg_{V_j}(q_i)}$
be a minimal separator of $q_i$ in $V_j$ and
$G'\in  (K[Y_0,...,Y_n])_{\deg_{W_i}(q'_j)}$
be a minimal separator of $q'_j$ in $W_i$.
Set $F := GG' \in S$.
Observe that $F(q_i\times q'_j) = G(q_i)G'(q'_j) \ne 0$.
Let $q\times q' \in \X \setminus\{q_i\times q'_j\}$.
If $q \in V_j\setminus\{q_i\}$ or $q'\in W_i\setminus\{q'_j\}$,
then $G(q)=0$ or $G'(q')=0$, and so $F(q\times q')=0$.
Now consider the case $q \notin V_j\setminus\{q_i\}$
and $q'\notin W_i\setminus\{q'_j\}$.
There are the following three cases:
\begin{itemize}
	\item If $q=q_i$ and $q'\ne q'_j$, then
	$q' \in W_i\setminus\{q'_j\}$, a contradiction.
	\item If $q'=q_j$ and $q\ne q_i$, then
	$q \in V_j\setminus\{q_i\}$, a contradiction.
	\item If $q\ne q_i$ and $q'\ne q'_j$,
	then the $(\star)$-property of $\X$ implies
	$q\times q'_j$ or $q_i\times q' \in \X$.
	It follows that $q \in V_j\setminus\{q_i\}$
	or $q' \in W_i\setminus\{q'_j\}$. This is again
	a contradiction.
\end{itemize}
Altogether, $F(q_i\times q'_j)\ne 0$ and $F(q\times q')=0$ for all
$q\times q' \in \X \setminus\{q_i\times q'_j\}$.
Hence $F$ is a separator of $q_i\times q'_j$
with $\deg(F) =  (\deg_{V_j}(q_i), \deg_{W_i}(q'_j))$,
and so $(r,t)\preceq (\deg_{V_j}(q_i), \deg_{W_i}(q'_j))$.

Furthermore, if $(r,t)\prec (\deg_{V_j}(q_i), \deg_{W_i}(q'_j))$,
then there is a minimal separator $\tilde{F}\ne 0$
of $q_i\times q'_j$ with $\deg(\tilde{F})=(r,t)$
and $r<\deg_{V_j}(q_i)$ or $t<\deg_{W_i}(q'_j)$.
Suppose that $r<\deg_{V_j}(q_i)$ (a similar argument for
the case $t<\deg_{W_i}(q'_j)$).
Set $\Y := V_j\times \{q'_j\} \subseteq \X$.
Then $\tilde{F}$ is also a separator of $q_i\times q'_j$ in~$\Y$.
As in the proof of Proposition~\ref{Prop-S4-12}, we have
$\deg_\Y(q_i\times q'_j) = (\deg_{V_j}(q_i),0)$.
This implies $(\deg_{V_j}(q_i),0) \preceq \deg(\tilde{F})=(r,t)$,
in particularly, we get $\deg_{V_j}(q_i) \le r < \deg_{V_j}(q_i)$,
a contradiction.
Therefore it must be $(r,t)= (\deg_{V_j}(q_i), \deg_{W_i}(q'_j))$.
\end{proof}

\begin{theorem}\label{Thm-S5-02}
Let $\X\subseteq\pmpn$ have the $(\star)$-property.
Then $\X$ has the Cayley-Bacharach property if and only if
the following conditions are satisfied:
\begin{enumerate}
	\item[(a)]  $V_1,...,V_{s_2}$ are Cayley-Bacharach schemes in $\bbP^m$
	and $r_{V_1}=\cdots = r_{V_{s_2}}$;
	\item[(b)] $W_1,...,W_{s_1}$ are Cayley-Bacharach schemes in $\bbP^n$
	and $r_{W_1}=\cdots = r_{W_{s_1}}$.
\end{enumerate}
\end{theorem}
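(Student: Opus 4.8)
The plan is to read off the equivalence directly from Proposition~\ref{Prop-S5-01}, which records the (single) degree of each point of $\X$ as a pair assembled from fibre degrees. Since $\X$ has the $(\star)$-property it is ACM, so by Theorem~\ref{Thm-S4-07}(c) every point $q_i\times q'_j\in\X$ has a unique degree, and $\X$ enjoys the Cayley-Bacharach property precisely when $\deg_\X(q_i\times q'_j)$ is independent of the chosen point. Thus the whole statement will be recast as a statement about when this pair is constant.

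First I would apply Proposition~\ref{Prop-S5-01} to write $\deg_\X(q_i\times q'_j)=(\deg_{V_j}(q_i),\deg_{W_i}(q'_j))$ for every $q_i\times q'_j\in\X$, and then observe the elementary fact that a pair is constant over an index set exactly when each of its two coordinates is constant over that set. Hence the Cayley-Bacharach property of $\X$ is equivalent to the conjunction of the two constancy statements: that $\deg_{V_j}(q_i)$ is the same for all $q_i\times q'_j\in\X$, and that $\deg_{W_i}(q'_j)$ is the same for all $q_i\times q'_j\in\X$.

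The core of the argument is to identify each of these constancy statements with conditions (a) and (b). For the first coordinate I would fix $j$ and use that $q_i\times q'_j\in\X$ is equivalent to $q_i\in V_j$; thus the admissible indices $i$ range exactly over the points of $V_j$, and constancy of $\deg_{V_j}(q_i)$ over these $i$ means precisely that all points of $V_j$ share one common degree. Because some point of $V_j$ always attains the maximal degree $r_{V_j}$ by \cite[Proposition~1.14]{GKR1993}, this common value must equal $r_{V_j}$, so $V_j$ is a Cayley-Bacharach scheme in $\bbP^m$; letting $j$ vary, constancy across fibres then forces $r_{V_1}=\cdots=r_{V_{s_2}}$, which is exactly condition (a). The symmetric treatment of the second coordinate, using $q_i\times q'_j\in\X\iff q'_j\in W_i$, yields condition (b) in $\bbP^n$. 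The converse direction is immediate from Proposition~\ref{Prop-S5-01}: if (a) and (b) hold with common values $r$ and $t$, then $\deg_\X(q_i\times q'_j)=(r_{V_j},r_{W_i})=(r,t)$ is independent of the point.

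I expect the only delicate point to be the bookkeeping in this last step: verifying that ``one coordinate is constant over all admissible pairs'' splits cleanly into ``each fibre is a Cayley-Bacharach scheme'' together with ``the fibre regularity indices agree,'' and in particular that sharing a common degree inside a single fibre already forces that degree to be the fibre's regularity index. Once Proposition~\ref{Prop-S5-01} is in hand, the remainder is a direct logical decomposition rather than new geometry.
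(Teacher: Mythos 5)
Your proof is correct, and its converse direction is organized genuinely differently from the paper's. Both arguments rest on the same two pillars, namely Proposition~\ref{Prop-S5-01} and the fact that any finite set of points contains a point whose degree equals the regularity index (\cite[Proposition~1.14]{GKR1993}), and the forward implication is identical in both. In the converse, however, the paper first normalizes the fibres via the $(\star)$-property, assuming $\X_1=V_1\supseteq\cdots\supseteq V_{s_2}$ and $\X_2=W_1\supseteq\cdots\supseteq W_{s_1}$, applies \cite[Proposition~1.14]{GKR1993} only to the extreme fibres $\X_1$ and $\X_2$ to pin the common degree down to $(r_{\X_1},r_{\X_2})$, and then extracts $r_{V_j}=r_{\X_1}$ and $r_{W_i}=r_{\X_2}$ from the chain $\deg_{V_j}(q_i)\le r_{V_j}\le r_{V_1}=r_{\X_1}$, which tacitly uses monotonicity of the regularity index under inclusion of point sets. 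You instead split the constant degree pair coordinatewise, observe that $q_i\times q'_j\in\X$ if and only if $q_i\in V_j$ (equivalently $q'_j\in W_i$), and apply \cite[Proposition~1.14]{GKR1993} to every fibre $V_j$ and $W_i$ separately: within-fibre constancy makes each fibre a Cayley-Bacharach scheme with common degree $r_{V_j}$ (resp.\ $r_{W_i}$), and cross-fibre constancy, valid because every fibre is nonempty, equalizes the regularity indices. This bypasses the nesting of the fibres and the monotonicity fact entirely and is, if anything, cleaner; what it does not deliver, and the paper's route does (feeding the corollary that follows), is the identification of the common values with $r_{\X_1}$ and $r_{\X_2}$ and the conclusion that $\X_1,\X_2$ are themselves Cayley-Bacharach schemes --- though under the $(\star)$-property this is immediate anyway, since after reordering $V_1=\X_1$ and $W_1=\X_2$. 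Your appeal to Theorem~\ref{Thm-S4-07}(c) for single-valuedness of degrees (ACM-ness coming from the $(\star)$-property) is exactly the hypothesis Proposition~\ref{Prop-S5-01} rests on, so there is no gap there.
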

\begin{proof}
If $\X$ satisfies the conditions (a) and (b), then
(a) implies $\deg_{V_j}(q)= r_{V_1}$ for all $q\in V_j$ and
for $j=1,...,s_2$, while (b) implies $\deg_{W_i}(q')= r_{W_1}$
for all $q'\in W_i$ and for $i=1,...,s_1$.
By Proposition~\ref{Prop-S5-01}, we obtain
$\deg(q\times q') = (r_{V_1}, r_{W_1})$ for all $q\times q'\in\X$.
Therefore $\X$ has the Cayley-Bacharach property.

Conversely, suppose that $\X$ has the Cayley-Bacharach property, i.e.,
there is $(r,t)\in \N^2$ such that
$\deg_\X(q\times q')=(r,t)$ for all $q\times q' \in \X$.
Note that we may here assume that
$\X_1 = V_1 \supseteq\cdots\supseteq V_{s_2}$
and $\X_2 = W_1 \supseteq\cdots\supseteq W_{s_1}$.
Especially, $\{q_1\}\times \X_2 \subseteq  \X$ and
and $\X_1 \times \{q'_1\} \subseteq \X$.
According to \cite[Proposition 1.14]{GKR1993},
$\X_1$ always contains a point $q_i$ of degree $r_{\X_1}$
and $\X_2$ always contains a point $q'_j$ of degree $r_{\X_2}$.
From $\deg(q_1\times q'_1) = \cdots = \deg(q_{s_1}\times q'_1) =(r,t)$,
Proposition~\ref{Prop-S5-01} yields
$$
r = \deg_{V_1}(q_1) = \cdots =\deg_{V_1}(q_{s_1}) =
\deg_{\X_1}(q_i) = r_{\X_1}.
$$
Similarly, it follows from
$\deg(q_1\times q'_1) = \cdots = \deg(q_1\times q'_{s_2}) =(r,t)$
and Proposition~\ref{Prop-S5-01} that
$$
t = \deg_{W_1}(q'_1) = \cdots =\deg_{W_1}(q'_{s_2}) =
\deg_{\X_2}(q'_j) = r_{\X_2}.
$$
In particular, $\X_1$ and $\X_2$ are Cayley-Bacharach schemes.
Moreover, we have $r_{V_{s_2}}\le\cdots \le r_{V_1}=r_{\X_1}$ and
$r_{W_{s_1}}\le\cdots \le r_{W_1}=r_{\X_2}$.
Thus $(r_{\X_1},r_{\X_2})=\deg_\X(q_i\times q'_j) = (\deg_{V_j}(q_i), \deg_{W_i}(q'_j)) \le (r_{V_j}, r_{W_i})$ for all $q_i\times q'_j \in\X$
implies $r_{V_{s_2}}=\cdots = r_{V_1}=r_{\X_1}$
and $r_{W_{s_1}}=\cdots = r_{W_1}=r_{\X_2}$ and
all $V_1,...,V_{s_2} \subseteq \bbP^m$
and  $W_1,...,W_{s_1} \subseteq \bbP^n$ are Cayley-Bacharach schemes.
\end{proof}

The next corollary is a direct consequence of Theorem~\ref{Thm-S5-02}.

\begin{corollary}
Let $\X\subseteq\pmpn$ have the $(\star)$-property.
If $\X$ has the Cayley-Bacharach property, then $\X_1\subseteq \bbP^m$
and $\X_2\subseteq \bbP^n$ are Cayley-Bacharach schemes.
\end{corollary}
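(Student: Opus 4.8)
The plan is to read the conclusion off directly from Theorem~\ref{Thm-S5-02}, so this will be a short deduction rather than a fresh argument. First I would invoke that theorem: since $\X$ has the $(\star)$-property and, by hypothesis, the Cayley-Bacharach property, conditions (a) and (b) of Theorem~\ref{Thm-S5-02} hold. In particular $V_1,\dots,V_{s_2}\subseteq\bbP^m$ and $W_1,\dots,W_{s_1}\subseteq\bbP^n$ are all Cayley-Bacharach schemes (the accompanying equalities $r_{V_1}=\cdots=r_{V_{s_2}}$ and $r_{W_1}=\cdots=r_{W_{s_1}}$ will not be needed for this corollary).

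Next I would recall the normalization fixed just before Proposition~\ref{Prop-S5-01}: because $\X$ has the $(\star)$-property, after renaming we may assume
$$
\X_1 = V_1 \supseteq \cdots \supseteq V_{s_2}
\quad\text{and}\quad
\X_2 = W_1 \supseteq \cdots \supseteq W_{s_1}.
$$
Thus $\X_1$ coincides with $V_1$ and $\X_2$ coincides with $W_1$, both of which appear in the lists shown above to consist of Cayley-Bacharach schemes. Therefore $\X_1=V_1\subseteq\bbP^m$ and $\X_2=W_1\subseteq\bbP^n$ are Cayley-Bacharach schemes, which is exactly the assertion. As an alternative I could instead quote the intermediate conclusion reached inside the proof of Theorem~\ref{Thm-S5-02}, where it is already observed (via Proposition~\ref{Prop-S5-01}, applied to the rows $\{q_1\}\times\X_2$ and columns $\X_1\times\{q'_1\}$) that $\deg_{\X_1}(q_i)=r_{\X_1}$ and $\deg_{\X_2}(q'_j)=r_{\X_2}$ for all points, i.e.\ that $\X_1$ and $\X_2$ are Cayley-Bacharach schemes.

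I do not expect any genuine obstacle here, since the statement is a direct specialization of Theorem~\ref{Thm-S5-02}. The only point that requires a little care is the identification $\X_1=V_1$ and $\X_2=W_1$: this rests on the ordering convention $|V_{s_2}|\le\cdots\le|V_1|$, $|W_{s_1}|\le\cdots\le|W_1|$ together with the inclusions $V_j\subseteq\X_1$ and $W_i\subseteq\X_2$ and the fact that, under the $(\star)$-property, the largest fibres recover all of $\X_1$, respectively $\X_2$. Once that bookkeeping is acknowledged, the proof is immediate.
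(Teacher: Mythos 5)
Your proposal is correct and matches the paper exactly: the paper states this corollary as a direct consequence of Theorem~\ref{Thm-S5-02}, and your deduction (conditions (a) and (b) of that theorem together with the normalization $\X_1=V_1$, $\X_2=W_1$ valid under the $(\star)$-property) is precisely the intended routine specialization, with the identification $\X_1=V_1$, $\X_2=W_1$ correctly grounded in the convention the paper fixes before Proposition~\ref{Prop-S5-01}.
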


\begin{example}
Let $K=\Q$ and $\X$ be the set of 24 points in
$\bbP^2\times \bbP^2$ given by
$\X = \X_1\times\X_2 \setminus \{q_5\times q_5\}$, where
$\X_1 = \X_2 = \{ q_1,...,q_5\}\subseteq \bbP^2$ with
$q_1=(1:0:0)$, $q_2=(1:1:0)$, $q_3=(1:0:1)$,
$q_4=(1:1:1)$ and $q_5=(1:1:2)$ (see the figure).
\begin{center}
	\includegraphics[scale=0.5]{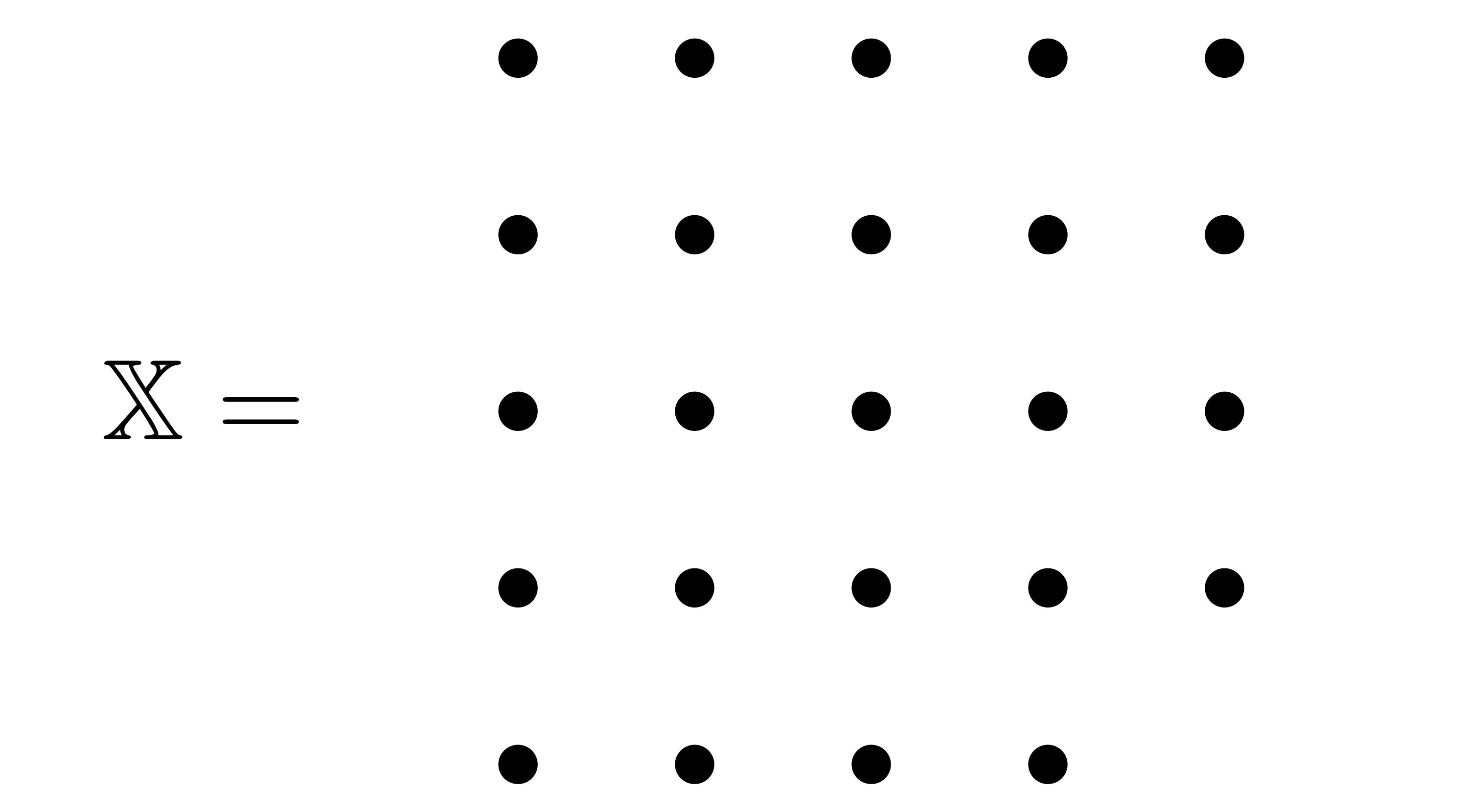}
\end{center}
Then we have $V_1 = V_2 = V_3 = V_4 = \X_1$, $V_5 = \X_1\setminus\{q_5\}$,
$W_1 = W_2=W_3=W_4=\X_2$ and $W_5 = \X_2\setminus\{q_5\}$.
Then $V_5$, $W_5$  are complete intersections in $\bbP^2$,
and so Cayley-Bacharach schemes. Also,
$\X_1$ is a Cayley-Bacharach scheme in $\bbP^2$ and
$r_{\X_1}= 2 = r_{V_5} = r_{W_5}$.
So, the conditions (a) and (b) in Theorem~\ref{Thm-S5-02}
are satisfied, and therefore $\X$ has the Cayley-Bacharach property.
\end{example}

\begin{proposition}\label{Prop-S5-05}
Let $\X\subseteq\bbP^1\times \bbP^n$ have the the $(\star)$-property.
Then $\X$ has the Cayley-Bacharach property
if and only if $\X = \X_1\times \X_2$ and
$\X_2\subseteq \bbP^n$ is a Cayley-Bacharach scheme.
\end{proposition}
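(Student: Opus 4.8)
The plan is to reduce the statement to the characterization in Theorem~\ref{Thm-S5-02}, exploiting the special geometry of the first factor $\bbP^1$. The decisive observation is that when $m=1$ each set $V_j=\pi_1(\pi_2^{-1}(q'_j)\cap\X)$ is a finite set of points in $\bbP^1$. Such a set is the zero locus of a single form (a product of linear forms), hence a complete intersection and in particular a Cayley-Bacharach scheme, and its regularity index equals its cardinality minus one. Consequently the ``Cayley-Bacharach scheme'' part of condition (a) in Theorem~\ref{Thm-S5-02} is automatic in $\bbP^1\times\bbP^n$, and the content of (a) collapses to the purely numerical condition $r_{V_1}=\cdots=r_{V_{s_2}}$.

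For the backward implication I would argue directly. Assume $\X=\X_1\times\X_2$ with $\X_2\subseteq\bbP^n$ a Cayley-Bacharach scheme. Since $\X_1\subseteq\bbP^1$ is automatically a Cayley-Bacharach scheme, Proposition~\ref{Prop-S4-12} applies at once and yields that $\X$ has the Cayley-Bacharach property. (One may equivalently verify conditions (a) and (b) of Theorem~\ref{Thm-S5-02}: here $V_j=\X_1$ and $W_i=\X_2$ for all $i,j$, so both conditions hold.)

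For the forward implication, suppose $\X$ has the Cayley-Bacharach property. Theorem~\ref{Thm-S5-02} supplies conditions (a) and (b). From (a) I extract $r_{V_1}=\cdots=r_{V_{s_2}}$; since each $V_j\subseteq\bbP^1$, this is equivalent to $|V_1|=\cdots=|V_{s_2}|$. Combined with the descending chain $\X_1=V_1\supseteq\cdots\supseteq V_{s_2}$ furnished by the $(\star)$-property, equal cardinalities force $V_1=\cdots=V_{s_2}=\X_1$. This says precisely that $q_i\times q'_j\in\X$ for every $q_i\in\X_1$ and every $q'_j\in\X_2$, that is, $\X=\X_1\times\X_2$. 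It follows that $W_i=\X_2$ for all $i$, so condition (b) — that $W_1,\dots,W_{s_1}$ be Cayley-Bacharach schemes with equal regularity index — reduces to the single assertion that $\X_2$ is a Cayley-Bacharach scheme, which is the remaining half of the claim.

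I expect the only delicate point to be the passage from $r_{V_1}=\cdots=r_{V_{s_2}}$ to $\X=\X_1\times\X_2$. This step rests entirely on the $\bbP^1$-specific identity between regularity index and cardinality, used together with the nesting of the $V_j$, and it is exactly where the hypothesis $m=1$ is indispensable: for $m>1$ equal regularity indices need not force equal cardinalities, so the forced product structure — and hence the stronger conclusion of this proposition compared with Theorem~\ref{Thm-S5-02} — can genuinely fail.
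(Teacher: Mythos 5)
Your proposal is correct and takes essentially the same approach as the paper's proof: both directions hinge on the fact that every finite subset of $\bbP^1$ is a complete intersection with $r_V=|V|-1$, so that condition (a) of Theorem~\ref{Thm-S5-02} together with the nesting $\X_1=V_1\supseteq\cdots\supseteq V_{s_2}$ forces $V_1=\cdots=V_{s_2}=\X_1$ and hence $\X=\X_1\times\X_2$, while the converse is exactly the paper's application of Proposition~\ref{Prop-S4-12}. Your write-up merely makes explicit the step (equal regularity indices imply equal cardinalities in $\bbP^1$) that the paper leaves implicit in the line ``Theorem~\ref{Thm-S5-02} yields $\X_1=V_1=\cdots=V_{s_2}$.''
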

\begin{proof}
Note that every finite set $V$ in $\bbP^1$ is a complete intersection
and $r_{V} = |V|-1$.
Suppose that $\X$ has the Cayley-Bacharach property.
Then Theorem~\ref{Thm-S5-02} yields $\X_1 = V_1 = \cdots = V_{s_2}$
and $\X_2 = W_1 \supseteq \cdots \supseteq W_{s_1}$ an descending chain
of Cayley-Bacharach schemes with $r_{\X_2}=r_{W_1}=\cdots=r_{W_{s_1}}$.
For $j=1,...,s_2$, we have
$\pi_1(\pi_2^{-1}(q'_j)\cap \X) = V_j =\{q_1,...,q_{s_1}\}$,
and so $\pi_2^{-1}(q'_j)\cap \X = \{q_1\times q'_j,...,q_{s_1}\times q'_j\}
\subseteq \X$. Hence $\X_1\times \X_2 \subseteq \X$, and therefore
$\X= \X_1\times \X_2$.
Conversely, assume that $\X= \X_1\times \X_2$ and $\X_2$ is a
Cayley-Bacharach scheme in $\bbP^n$.
Clearly, $\X_1\subseteq \bbP^1$ is a complete intersection,
and hence a Cayley-Bacharach scheme. By Proposition~\ref{Prop-S4-12},
$\X$ has the Cayley-Bacharach property.
\end{proof}

\begin{corollary}\label{Cor-S5-06}
Let $\X\subseteq\bbP^1\times \bbP^n$ have the $(\star)$-property.
Then the following statements are equivalent:
\begin{enumerate}
	\item[(a)] $\X = CI(d_1,d'_1,...,d'_n)$ for some positive integers
	$d_1,d'_1,...,d'_n\ge 1$.
	\item[(b)] $\X$ has the Cayley-Bacharach property
	and $\HF_{\vartheta_\X}(d_1-1,r_{\X_2})\ne 0$.
\end{enumerate}
\end{corollary}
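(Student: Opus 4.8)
The plan is to obtain this corollary as a specialization of Theorem~\ref{Thm-S4-10} to $\bbP^1\times\bbP^n$, in which the hypothesis ``$\X=\X_1\times\X_2$'' occurring in part~(c) of that theorem is supplied automatically by Proposition~\ref{Prop-S5-05}. The one preliminary fact I would record is that every finite set $\X_1\subseteq\bbP^1$ is a complete intersection of type $(s_1)$ with $s_1=|\X_1|$ and regularity index $r_{\X_1}=s_1-1$ (as already used in the proof of Proposition~\ref{Prop-S5-05}). Writing $d_1:=s_1$, this gives the key numerical identity $d_1-1=r_{\X_1}$, and it is exactly this identity that matches the degree $(r_{\X_1},r_{\X_2})$ appearing in Theorem~\ref{Thm-S4-10}(c) with the degree $(d_1-1,r_{\X_2})$ appearing in statement~(b). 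Moreover, when $\X=CI(d_1,d'_1,\dots,d'_n)$ the first type number is forced to equal $|\X_1|=s_1$, so the two occurrences of $d_1$ in the corollary refer to the same integer.

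For ``(a)$\Rightarrow$(b)'' I would invoke Theorem~\ref{Thm-S4-10} directly: from $\X=CI(d_1,d'_1,\dots,d'_n)$ the equivalence (a)$\Leftrightarrow$(c) gives that $\X=\X_1\times\X_2$ has the Cayley-Bacharach property and that $\HF_{\vartheta_\X}(r_{\X_1},r_{\X_2})\ne 0$. Substituting $r_{\X_1}=d_1-1$ turns the non-vanishing into $\HF_{\vartheta_\X}(d_1-1,r_{\X_2})\ne 0$, which is precisely~(b). Alternatively, the non-vanishing can be read off from the formula $\HF_{\vartheta_\X}(r_{\X_1}+i,r_{\X_2}+j)=\HF_\X(i,j)$ of Proposition~\ref{Prop-S4-05} evaluated at $(i,j)=(0,0)$, where it takes the value $1$.

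For the converse ``(b)$\Rightarrow$(a)'' I would first feed the $(\star)$-property together with the assumed Cayley-Bacharach property into Proposition~\ref{Prop-S5-05}, obtaining $\X=\X_1\times\X_2$ with $\X_2$ a Cayley-Bacharach scheme in $\bbP^n$. With the product structure now in hand, the $\bbP^1$ observation gives $r_{\X_1}=d_1-1$, so the hypothesis $\HF_{\vartheta_\X}(d_1-1,r_{\X_2})\ne 0$ reads $\HF_{\vartheta_\X}(r_{\X_1},r_{\X_2})\ne 0$. All three conditions of Theorem~\ref{Thm-S4-10}(c) are thereby satisfied, and its implication (c)$\Rightarrow$(a) yields $\X=CI(d_1,d'_1,\dots,d'_n)$.

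The argument is therefore a short assembly of Theorem~\ref{Thm-S4-10} and Proposition~\ref{Prop-S5-05}, with no genuine analytic difficulty. The only point demanding care is the bookkeeping of $d_1$: since in~(b) the set $\X$ is not assumed a complete intersection, $d_1$ must be read as $|\X_1|$, and the equality $d_1-1=r_{\X_1}$ is licensed only after Proposition~\ref{Prop-S5-05} has produced the decomposition $\X=\X_1\times\X_2$. I expect this indexing issue---rather than any substantive estimate---to be the main thing to get right. If desired, one can make the role of the degree fully transparent by noting via Proposition~\ref{Prop-S4-05} that $\vartheta_\X=\vartheta_{\X_1}R_\X\cdot\vartheta_{\X_2}R_\X$ with $\vartheta_{\X_1}$ principal and generated by a non-zerodivisor of degree $r_{\X_1}$ (as $\X_1\subseteq\bbP^1$ is a complete intersection); multiplication by that generator then identifies $\HF_{\vartheta_\X}(r_{\X_1},r_{\X_2})$ with $\HF_{\vartheta_{\X_2}}(r_{\X_2})$, explaining why $(d_1-1,r_{\X_2})$ is the correct degree at which to test.
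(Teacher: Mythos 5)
Your proof is correct and takes essentially the same route as the paper, whose entire proof of this corollary is the single line that it ``follows directly from Theorem~\ref{Thm-S4-10} and Proposition~\ref{Prop-S5-05}''; your write-up simply makes explicit the assembly that line leaves implicit (Proposition~\ref{Prop-S5-05} for the product decomposition, the identity $d_1-1=r_{\X_1}$ from the fact that any finite $\X_1\subseteq\bbP^1$ is a complete intersection of type $(s_1)$, and then Theorem~\ref{Thm-S4-10}(a)$\Leftrightarrow$(c)). Your remark that in statement~(b) the integer $d_1$ must be read as $|\X_1|$, with the equality $d_1-1=r_{\X_1}$ available only after Proposition~\ref{Prop-S5-05} yields $\X=\X_1\times\X_2$, is a correct clarification of a bookkeeping point the paper does not spell out.
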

\begin{proof}
This follows directly from Theorem~\ref{Thm-S4-10}
and Proposition~\ref{Prop-S5-05}.
\end{proof}

%
%

\medskip
\section*{Acknowledgements}
The authors thank Martin Kreuzer and Elena Guardo for
their encouragement to elaborate some results presented here.
The first two authors were partially supported by Hue University
under grant number DHH2021-03-159.
The last three authors were partially supported by
University of Education, Hue University.

\bigbreak
%
%
\begin{thebibliography}{99}

\bibitem{BK2002} S. Bouchiba and S. Kabbaj,
Tensor products of Cohen–Macaulay rings:
solution to a problem of Grothendieck,
J. Algebra \textbf{252} (2002), 65--73.

\bibitem{CN2020}
M. Chardin and N. Nemati, Multigraded regularity of complete intersections,
available at \texttt{arXiv:2012.14899v1} (2020).

\bibitem{DK1999} G. de Dominicis and M.~Kreuzer,
K\"ahler differentials for points in~$\mathbb{P}^n$,
J. Pure Appl. Alg.  \textbf{141}  (1999), 153--173.

\bibitem{GKR1993} A.V. Geramita, M. Kreuzer, and L. Robbiano,
Cayley-Bacharach schemes and their canonical modules,
Trans. Amer. Math. Soc. \textbf{339} (1993), 163--189.

\bibitem{GJ2019}
G. Favacchio and J. Migliore,
Multiprojective spaces and the arithmetically Cohen–Macaulay property,
Math. Proc. Camb. Phil. Soc. \textbf{166} (2019), 583--597.

\bibitem{GKLL2017} E. Guardo, M.~Kreuzer, T.~N.~K.~Linh,
and L.~N.~Long,  K\"ahler differentials for fat point schemes
in~$\mathbb{P}^1\times\mathbb{P}^1$, J. Commut. Algebra
(to appear 2021).

\bibitem{GLL2018} E. Guardo, T.~N.~K.~Linh,
and L.~N.~Long,
A presentation of the K\"ahler differential module for a fat point
scheme in~$\mathbb{P}^{n_1}\times\cdots\times\mathbb{P}^{n_k}$,
ITM Web of Conferences 20(4): 01007 (2018).

\bibitem{GuVT2004}  E.~Guardo and A. Van Tuyl,
Fat Points in $\mathbb{P}^1 \times \mathbb{P}^1$
and their Hilbert functions,
Canad. J. Math.  \textbf{56} (2004), no. 4, 716--741.

\bibitem{GuVT2008}  E.~Guardo and A. Van Tuyl,
ACM sets of points in multiprojective space,
Collect. Math. \textbf{59}(2) (2008), 191--213.

\bibitem{GuVT2008b}  E.~Guardo and A. Van Tuyl,
Separators of points in a multiprojective space,
Manuscripta Math. \textbf{126} (1) (2008), 99--113.

\bibitem{KLL2015}
M. Kreuzer, T.~N.~K.~Linh, and L.~N.~Long,
K\"{a}hler differentials and K\"{a}hler differents
for fat point schemes,
J. Pure Appl. Algebra \textbf{219} (2015), 4479--4509.

\bibitem{KL2017}
M. Kreuzer and Le N. Long,
Characterizations of zero-dimensional complete intersections,
Beitr. Algebra Geom. \textbf{58} (2017), 93–-129.

\bibitem{KLR2019}
M. Kreuzer, Le N. Long, and L. Robbiano,
On the Cayley-Bacharach property,
Communications in Algebra \textbf{47} (2019), 328--354.

\bibitem{KR2000}
M.~Kreuzer and L.~Robbiano,
Computational Commutative Algebra 1,
Springer Verlag, Heildelberg, 2000.

\bibitem{Kun1986}
E. Kunz, K\"{a}hler Differentials,
Adv. Lectures Math.,
Wieweg Verlag, Braunschweig, 1986.

\bibitem{Kun2005}
E. Kunz, Introduction to Plane Algebraic Curves,
Birkh\"auser, Boston, 2005.

\bibitem{Mar2009}
L. Marino, A characterization of ACM 0-dimensional subschemes of
$\bbP^1\times\bbP^1$, Le Matematische \textbf{LXIV} (2009), 41--56.

\bibitem{SS1975}
G. Scheja and U. Storch, \"Uber Spurfunktionen bei
vollst\"andigen Durchschnitten, J. Reine Angew. Math.
\textbf{278(279)} (1975), 174--190.

\bibitem{SVT2006}
J. Sidman and A. Van Tuyl, Multigraded regularity:
Syzygies and fat points,
Beitr\"age zur Algebra und Geometrie \textbf{47} (2006), 1--22.

\bibitem{Tuy2002}
A. Van Tuyl, The border of the Hilbert function of a set of points in
$\mathbb{P}^{n_1}\times\cdots\times\mathbb{P}^{n_k}$,
J. Pure Appl. Algebra \textbf{176} (2002), 223--247.

\bibitem{Tuy2003}
A. Van Tuyl, The Hilbert function of ACM set of points in
$\mathbb{P}^{n_1}\times\cdots\times\mathbb{P}^{n_k}$,
J. Algebra \textbf{264} (2003), 420-441.

\bibitem{ApC}
The ApCoCoA Team,
ApCoCoA: Applied Computations in Commutative Algebra,
available at \texttt{http://apcocoa.uni-passau.de}.

\end {thebibliography}

\end{document}